\newtheorem{theorem}{Theorem}[section]
\newtheorem{lemma}[theorem]{Lemma}
\newtheorem{remark}[theorem]{Remark}
\newtheorem{corollary}[theorem]{Corollary}
\newtheorem{proposition}[theorem]{Proposition}
\newtheorem{definition}[theorem]{Definition}
\newcommand\R{\mathbb{R}}
\newcommand\N{\mathbb{N}}
\DeclareMathOperator*{\argmin}{arg\, min}
\newcommand\norm[1]{\|#1\|}
\newcommand\set[1]{\{#1\}}
\begin{document}

\title{Range-relaxed criteria for choosing the Lagrange multipliers
       in the Levenberg-Marquardt method}

\setcounter{footnote}1

\author{
A.~Leit\~ao%
\thanks{Department of Mathematics, Federal University of St.\,Catarina,
        P.O.\,Box 476, 88040-900 Florian\'opolis, Brazil,
        \href{mailto:acgleitao@gmail.com}{\tt acgleitao@gmail.com},\,
        \href{mailto:fabiomarg@gmail.com}{\tt fabiomarg@gmail.com}.}
\and
F.~Margotti$^\dag$
\and
B.~F.~Svaiter%
\thanks{IMPA, Estr.\,Dona Castorina 110, 22460-320 Rio de Janeiro, Brazil,
        \href{mailto:benar@impa.br}{\tt benar@impa.br}.} }
\date{\small \today}

\maketitle

\begin{abstract}
In this article we propose a novel strategy for choosing the Lagrange multipliers in
the Levenberg-Marquardt method for solving ill-posed problems modeled by nonlinear
operators acting between Hilbert spaces.
Convergence analysis results are established for the proposed method, including:
monotonicity of iteration error, geometrical decay of the residual, convergence
for exact data, stability and semi-convergence for noisy data.
Numerical experiments are presented for an elliptic parameter identification
two-dimensional EIT problem. The performance of our strategy is compared with
standard implementations of the Levenberg-Marquardt method (using {\em a priori}
choice of the multipliers).
\end{abstract}

\noindent {\small {\bf Keywords.} Nonlinear Ill-posed problems;
Levenberg-Marquardt method, Lagran\-ge multipliers.}
\medskip

\noindent {\small {\bf AMS Classification:} 65J20, 47J06.}

% --------------------------------------------------------------------
\section{Introduction} \label{sec:intro}

In this article we address the Levenberg-Marquardt (LM) method  \cite{Le44, Ma63},
which is a well established iterative method for obtaining stable approximate
solutions of nonlinear ill-posed operator equations \cite{DES98, Ha97} (see also
the textbooks \cite{EngHanNeu96, KalNeuSch08} and the references therein).

The novelty of our approach consists in adopting a range-relaxed criteria for the
choice of the Lagrange multipliers in the LM method.
Our approach is inspired in the recent paper \cite{BLS19}, where a range-relaxed
criteria was proposed for choosing the Lagrange multipliers in the iterated Tikhonov
method for linear ill-posed problems.

With our strategy, the new iterate is obtained as the projection of the current
one onto a level-set of the linearized residual function. This level belongs to
an interval (or \emph{range}), which is defined by the current (nonlinear)
residual and by the noise level.
As a consequence, the admissible Lagrange multipliers (in each iteration)
shall belong to a non-degenerate interval instead of being a single value
(see \eqref{def:alphak-lm}).
This fact reduces the computational burden of evaluating the multipliers.
Moreover, under appropriate assumptions, the choice of the above mentioned
range enforces geometrical decay of the residual (see \eqref{eq:residual_est}).

The resulting method (see Section~\ref{sec:rrLM-method}) proves, in the preliminary
numerical experiments (see Section~\ref{sec:numeric}), to be more efficient than
the classical geometrical choice of the Lagrange multipliers, typically used in
implementations of LM type methods.

\subsection{The model problem}

The \emph{exact case} of the \textit{inverse problem} we are interested in
consists of determining an unknown quantity $x \in X$ from the set of data
$y \in Y$, where $X$, $Y$ are Hilbert spaces, and $y$ is obtained by indirect
measurements of the parameter $x$, this process being described by the model
\begin{equation}\label{eq:inv-probl}
    F(x)  =  y\, ,
\end{equation}
with $F: D(F) \subseteq X \to Y$ being a non-linear ill-posed operator.
In practical situations, one does not know the data exactly. Instead,
an approximate measured data $y^\delta \in Y$ satisfying
\begin{equation}\label{eq:noisy-d}
    \norm{ y^\delta - y } \le \delta \, ,
\end{equation}
is available, where $\delta > 0$ is the (known) noise level.

Standard methods for finding a solution of \eqref{eq:inv-probl} are
based in the use of \textit{Iterative type} regularization methods
\cite{BakKok04, EngHanNeu96, HanNeuSch95, KalNeuSch08, Lan51},
which include the LM method, or \textit{Tikhonov type} regularization methods
\cite{EngHanNeu96, Mor93, SeiVog89, Tik63b, TikArs77, Sch93a}.

\subsection{The Levenberg-Marquardt method}

In what follows we briefly revise the LM method, which was proposed separately
by K.\,Levenberg \cite{Le44} and D.W.\,Marquardt \cite{Ma63} for solving nonlinear
optimization problems.
The LM method for solving the nonlinear ill-posed operator equation
\eqref{eq:inv-probl} was originally considered in \cite{DES98, Ha97},
and is defined by
\begin{align*}
  x_{k+1}^\delta \ := \ \argmin \ \big\{
  \| y^\delta - F(x_k^\delta) - F'(x_k^\delta) (x - x_k^\delta) \|^2
  + \alpha_k \| x - x_k^\delta \|^2 \big\} \, , \ k = 0, 1, \dots
\end{align*}
%
% which is equivalent to the iteration $x_{k+1}^\delta = x_k^\delta +
% ( F'(x_k^\delta)^* F'(x_k^\delta) + \alpha_k I)^{-1} F'(x_k^\delta)^*
% ( y^\delta - F(x_k^\delta) )$.
%
Here $F'(z): X \to Y$ is the Fr\'echet-derivative of $F$ in $z \in D(F)$,
$F'(z)^*: Y \to X$ is the corresponding adjoint operator and
$x_0^\delta \in X$ is some initial guess (possibly incorporating \textit{a priori}
knowledge about the exact solution(s) of $F(x)=y$).
Moreover, $\{ \alpha_k \}$ is a sequence of positive relaxation parameters
(or Lagrange multipliers), aiming to guarantee convergence and stability
of the iteration. This method can be summarized as follows
\begin{equation}
 \label{def:lm}
x_{k+1}^\delta  =  x_{k}^\delta + h_k \, , \ \ {\rm whith} \ \ \
          h_k  :=  \big( F'(x_k^\delta)^* F'(x_k^\delta) + \alpha_k I \big)^{-1}
                   F'(x_k^\delta)^* ( y^\delta - F(x_k^\delta) ) \, .
\end{equation}
In the sequel we address some previous convergence analysis results:

\noindent {\bf (i)} \
For exact data (i.e., $\delta=0$) convergence is proved in \cite[Theorem~2.2]{Ha97},
provided the operator $F$ satisfies adequate regularity assumptions, and
$\{ \alpha_k \}$ satisfies the "exact" condition
\begin{equation} \label{def:alphak-lm}
\| y^\delta - F(x_k^\delta) - F'(x_k^\delta) h_{k,\alpha_k} \|^2
\ = \
\theta \, \| y^\delta - F(x_k^\delta) \|^2 \, ,
\end{equation}
where $h_{k,\alpha_k} = h_k(\alpha_k)$ is given by \eqref{def:lm}, and $\theta < 1$
is an appropriately chosen constant.%
\footnote{It is well known (cf \cite{Gr84}) that $\alpha_k$ is uniquely defined
by \eqref{def:alphak-lm}.}
In the case of inexact data (i.e., $\delta > 0$), semi-convergence is proven
if the iteration in \eqref{def:lm} is stopped according to the discrepancy principle.
%
% at the index $k_*^\delta = k(\delta,y_\delta) \in \N$ defined by the discrepancy principle
% \begin{equation} \label{def:discrep-lm}
% k_*^\delta \ := \ \max \, \big\{ k \in \N \, ; \ \| F(x_{j}^\delta) - y^\delta \|
%                           > \tau \delta \, , \ j = 0,1,\dots,k-1 \big\} \, ,
% \end{equation}
% with $\tau > 1$ \cite[Theor.2.3]{Ha97}.
%
% Moreover, the growth of stopping index $k_*^\delta$ is estimated \cite[Theor.2.4]{Ha97}.
% order $O(|\log\delta|)$, as $\delta \to 0$.
%
% These characteristics render \eqref{def:lm} a regularization method in the sense of
% \cite{EngHanNeu96}.
The analysis presented in \cite{Ha97} depends on a nonlinearity assumption
on the operator $F$, namely the {\em strong Tangential Cone Condition} (sTCC)
\cite{KalNeuSch08}.
% \begin{equation} \label{eq:s-tcc}
% \| F(\bar{x}) - F(x) -  F'(x)( \bar{x} - x ) \|_Y \ \leq \
%    C \, \| \bar{x} - x \|_X \, \| F(\bar{x}) - F(x) \|_Y \, , \qquad
%    \forall \ x, \bar{x} \in B_{\rho}(x_0)
% \end{equation}
% where $C > 0$ and $\rho > 0$.

\noindent {\bf (ii)} \
In \cite{BKL10} a convergence analysis for a Kaczmarz version of the LM method, using
constant sequence $\{ \alpha_k  = \alpha \}$, is presented. The convergence proofs
depend once again on a nonlinearity assumption on the operator $F$, namely the
{\em weak Tangential Cone Condition} (wTCC) \cite{EngHanNeu96, KalNeuSch08, HanNeuSch95}.

% It is worth mentioning that sTCC, as well as wTCC, represents a uniform assumption
% (on a ball of small radius) on the nonlinearity of the operator $F$. As one readily
% sees, wTCC follows from sTCC simply by rescaling the radius of the ball where the
% condition is formulated.

% The choice of the constant $\alpha_k = \alpha$ depends on the constants $\tau$ in
% \eqref{def:discrep-lm} and $\eta$ in \eqref{eq:w-tcc}, namely $\alpha > C^2q/(1-q)$,
% where $\tau > (1+\eta) /(1-\eta) > 1$ and $q \in (0,1)$ is some constant satisfying
% $q > \eta + (1+\eta) \tau^{-1}$.

\noindent {\bf (iii)} \ The algorithm \texttt{REGINN} is a Newton-like method
for solving nonlinear
inverse problems \cite{Rie99}. This iterative algorithm linearizes the
forward operator around the current iterate and subsequently applies a
regularization technique in order to find an approximate solution to the
linearized system, which in turn is added to the current iterate to provide
an update. If wTCC holds true and the iteration is terminated by the
discrepancy principle, then \texttt{REGINN} renders a regularization method
in the sense of \cite{EngHanNeu96}.
% Many modifications of this method have
% been proposed, including Kaczmarz versions and some adaptations to Banach
% spaces, see e.g. \cite{fabio3}.
If Tikhonov regularization is used for approximating the solution of the
linearized system, then \texttt{REGINN} becomes a variant of the LM method
with a choice of the Lagrange multipliers performed {\em a posteriori}.
In this case, the resulting method is very similar to the one presented
in \cite{Ha97}, but with the difference that the equality in \eqref{def:alphak-lm}
is replaced by an inequality.

\subsection{Criticism on the available choices of the Lagrange multipliers}
Although the proposed choice of $\{ \alpha_k \}$ in \cite{Ha97} is performed
{\em a posteriori}, there is a severe drawback: the calculation of $\alpha_k$ in
\eqref{def:alphak-lm} cannot be performed explicitly. Moreover, computation of
accurate numerical approximations for $\alpha_k$ is highly expensive.

For larger choices of the discrepancy constant, alternative parameter choice rules
are discussed in \cite{Ha97}, namely $\alpha_k = \alpha$ a positive constant, or
$\alpha_k := \| F'(x_k^\delta) \|^2$. However, the use of large values for
discrepancy principle implies in the computation of small stopping indexes,
meaning that LM iteration is interrupted before it can {\color{black} deliver} the best possible
approximate solution.
On the other hand, the constant choice $\{ \alpha_k = \alpha \}$ also has an
intrinsic disadvantage: although the calculation of $\alpha$ demands no numerical
effort, it does not lead to fast convergence of the sequence $\{ x_k^\delta \}$
(this is observed in the numerical experiments presented in \cite{BKL10}).

{\color{black}
The Newton type method proposed in \cite{Rie99} also chooses the Lagrange multiplier
within a range (see also \cite{winkler}).
However, differently from our criteria \eqref{eq:rb}, this range is
defined by a single inequality \cite[Inequality~(2.6)]{Rie99}.
As a consequence, a regularization method (an inner iteration) is needed for the
accurate computation of each multiplier.
}

{\color{black}
In our method, the computation of $\alpha_k$ requires knowledge about the noise
level $\delta > 0$ and the wTCC constant $\eta \in [0,1)$ (see Algorithm~I).
Other Newton type methods (with {\em a posteriori} choice of $\alpha_k$) also
have this characteristic, e.g., see
\cite[Lemma~3.2]{Rie99} and \cite[proof of Theorems~2.2 and~2.3]{Ha97}.
}

\subsection{Outline of the manuscript}
In Section~\ref{sec:rrLM-method} we state the basic assumptions and introduce the
range-relaxed criteria for choosing the Lagrange multipliers.
The algorithm for the corresponding LM type method is presented,
and we prove some preliminary results, which guarantee that our method is well defined.
In Section \ref{sec:conv-analysys} we present the main convergence analysis results,
namely: convergence for exact data, stability and semiconvergence results.
In Section \ref{sec:numeric} numerical experiments are presented for the EIT problem
in a 2D-domain. We compare the performance of our method with other implementations of
the LM method using classical ({\em a priori}) geometrical choices of the Lagrange
multipliers.
Section~\ref{sec:conclusion} is devoted to final remarks and conclusions.

% --------------------------------------------------------------------
\section{Range-relaxed Levenberg-Marquardt method} \label{sec:rrLM-method}

In this section we introduce a range-relaxed criteria for choosing
the Lagrange multipliers in the Levenberg-Marquardt (LM) method.
Moreover, we present and discuss an algorithm for the resulting LM
type method, here called the {\em range-relaxed Levenberg-Marquardt}
(rrLM) method.

We begin this section by introducing the main assumptions used in this
manuscript. It is worth mentioning that these assumptions are commonly used in the analysis
of iterative regularization methods for nonlinear ill-posed problems
\cite{EngHanNeu96, KalNeuSch08, Sch93a}.

% --------------------------------
\subsection{Main assumptions}

Throughout this article we assume that the domain of definition $D(F)$
has nonempty interior, and that the initial guess $x_0 \in X$ satisfies
$B_\rho(x_0) \subset D(F)$ for some $\rho > 0$. Additionally,
\medskip

\noindent (A1) \
The operator $F$ and its Fr\'echet derivative $F'$ are continuous. Moreover,
there exists $C > 0$ such that
\begin{equation} \label{eq:dfbounded}
\| F'(x) \| \, \le \, C \, , \quad \ x \in B_\rho(x_0) \, .
\end{equation}
%
% , i.e., there exists
% $x_0 \in X$ and $\rho>0$ such that $B_\rho(x_0) \subset D(F)$, where
% $B_\rho(x_0)$ is the ball of radius $\rho$ around $x_0$.
% % \noindent {\color{black}
% % The operator $F$ is weakly sequentially continuous and Fr\'echet
% % differentiable, and its domain of definition $D(F)$ is weakly closed.} \\
% Moreover, we assume the existence of $C > 0$ such that
% %
% \begin{equation} \label{eq:dfbounded}
% \| F'(x) \| \ \le \ C \, , \quad \ x \in B_\rho(x_0)
% \end{equation}
% %
% (the point $x_0^\delta = x_0$ will be used in \eqref{def:plm} as initial guess
% for the PLM iteration).

\noindent (A2) \
The wTCC holds at some ball $B_{\rho}(x_0)$, with $0 \leq \eta < 1$ and $\rho > 0$, i.e.,
\begin{equation} \label{eq:w-tcc}
\| F(\bar{x}) - F(x) -  F'(x)( \bar{x} - x ) \|_Y \, \leq \,
  \eta \, \| F(\bar{x}) - F(x) \|_Y \, , \qquad
  \forall \ x, \bar{x} \in B_{\rho}(x_0) \, .
\end{equation}

\noindent (A3) \
There exists $x^\star \in B_{\rho/2}(x_0)$ such that $F(x^\star) = y$,
where $y \in Rg(F)$ are the exact data satisfying \eqref{eq:noisy-d}, i.e., $x^\star$
is an arbitrary solution (non necessarily unique).

% --------------------------------
\subsection{A Levenberg-Marquadt type algorithm}

% In what follows we introduce the iterative method proposed in this manuscript.
% The choice of the Lagrange multipliers is described in Step~[3.1] of the algorithm.
%
In what follows we introduce an iterative method, which derives from the choice of
Lagrange multipliers proposed in this manuscript (see Step~[3.1] of the Algorithm~I).
\medskip

\begin{normalsize}
\noindent \fbox{
\begin{minipage}[h]{0.97 \textwidth}
{\bf Algorithm~I: Range-relaxed Levenberg-Marquadt}.
\begin{itemize}
\item [[0\!\!\!] ]  Choose an initial guess $x_0 \in X$; \ Set \ $k = 0$.
\item[[1\!\!\!] ] Choose the positive constants $\tau$, $\varepsilon$ and $p$ such that \\[-2ex]
  \begin{equation} \label{def:tau-sigma}
    \tau > \dfrac{1+\eta}{1-\eta} \, , \quad\quad
    0 < \varepsilon < \dfrac{\tau(1-\eta)-(1+\eta)}{\eta\tau} \, , \qquad\quad
    0 < p < 1.
  \end{equation} \\[-6ex]
\item [[2\!\!\!] ]  If \ $\norm{F(x_0) - y^\delta} \leq \tau\delta$, \ then \ $k^* = 0$; \ Stop!
\item [[3\!\!\!] ]  For \ $k \geq 0$ \ do \\[-4ex]
  \begin{itemize}
  \item [ [3.1\!\!\!] ] Compute $\alpha_k > 0$ and  $h_k \in X$, such that \\[-4ex]
    \begin{align}
      & h_k = \big( F'(x_k^\delta)^* F'(x_k^\delta) + \alpha_k I \big)^{-1}
             F'(x_k^\delta)^* (y^\delta - F(x_k^\delta)) \label{eq:hk} \\
      &\norm{y^\delta - F(x_k^\delta) - F'(x_k^\delta)h_k} \, \in \, [c_k , d_k] \label{eq:rb}
    \end{align}  \\[-5ex]
    where  \\[-5.5ex]
    \begin{align}
     & c_k = (1+\varepsilon) \eta \norm{F(x_k^\delta)-y^\delta}
             + (1+\eta) \delta \label{eq:ck} \\
     & d_k = p \, c_k + (1-p) \, \norm{F(x_k^\delta)-y^\delta} \, . \label{eq:dk}
%
%      & d_k = p \, \big[ (1+\varepsilon)\eta \norm{F(x_k^\delta)-y^\delta}+(1+\eta)\delta \big]
%              + (1-p) \, \norm{F(x_k^\delta)-y^\delta} \, . \label{eq:dk}
    \end{align} \\[-8ex]
  \item [[3.2\!\!\!] ] Set \\[-4ex]
    \begin{equation} \label{eq:xkp1}
      x_{k+1}^\delta \, = \, x_k^\delta + h_k \, .
    \end{equation} \\[-6ex]
%
%   \item [[3.3\!\!\!] ] Stop to iterate at step\, $k^* \geq 1$\, s.t.\,
%         $\norm{F(x_{k^*}^\delta) - y^\delta} \leq \tau\delta$ \
%         for the first time.
%
  \item [[3.3\!\!\!] ]  If \ $\norm{F(x_k^\delta) - y^\delta} \leq \tau\delta$,
        \ then \ $k^* = k$; \ Stop! \\
        \mbox{}\,\, Else \ $k = k+1$; \ Go to Step~[3].
  \end{itemize}
\end{itemize}
\end{minipage}
}
\end{normalsize}

\begin{remark}
Due to (A2) and \eqref{def:tau-sigma}, it follows $\tau > 1$. Moreover,
$\big[ \tau(1-\eta)-(1+\eta) \big] (\eta\tau)^{-1} > 0$. Consequently,
the interval used to define $\varepsilon$ in \eqref{def:tau-sigma} is
non-degenerate.
\end{remark}

\begin{remark}
For linear operators $F: X \to Y$, Assumption (A2) is trivially satisfied
with $\eta = 0$. Thus, \,$c_k = \delta$,
\,$d_k = p \delta + (1-p) \norm{F(x_k^\delta) - y^\delta}$ \,and
\eqref{eq:rb} reduces to
$$
\norm{F\, x_k^\delta - y^\delta + F\, h_k^\delta} \, = \,
\norm{F\, x_{k+1}^\delta - y^\delta} \, \in \, [c_k , d_k] \, .
$$
Consequently, the rrLM method in Algorithm~I generalizes the
{\em range-relaxed nonstationary iterated Tikhonov} (rrNIT) method for
linear ill-posed operator equations proposed in \cite{BLS19}.
\end{remark}

% \newpage
% \noindent \fbox{
% \begin{minipage}[h]{0.97 \textwidth}
% {\bf Algorithm I: Range-relaxed Levenberg-Marquadt}.
% \begin{itemize}
% \item [[1\!\!\!] ]  choose an initial guess \ $x_0 \in X$,
% \item[[2\!\!\!] ] choose the positive constants $\tau$, $\varepsilon$ and $p$ such that
%   \begin{equation} \label{def:tau-sigma}
%     \tau > \dfrac{1+\eta}{1-\eta} \, , \quad\quad
%     \varepsilon = \dfrac{1-\eta}2 \Big[ \tau - \dfrac{1+\eta}{1-\eta} \Big] \, , \qquad
%     0 < p < 1.
%   \end{equation}
% \item [[3\!\!\!] ]  for \ $k \geq 1$ \ do
%   \begin{itemize}
% %
%   \item [ [3.1\!\!\!] ] Compute $\alpha_k > 0$ and  $h_k \in X$, such that
%     \begin{align}
%       \label{eq:hk}
%       &h_k = \big( F'(x_k^\delta)^* F'(x_k^\delta) + \alpha_k I \big)^{-1}
%              F'(x_k^\delta)^* (y^\delta - F(x_k^\delta)) \\
%       \label{eq:rb}
%       &\norm{F(x_k^\delta) + F'(x_k^\delta)h_k - y^\delta} \, \in \, [c_k , d_k]
%     \end{align}
%     where
%     \begin{align}
%      & c_k = \eta \norm{F(x_k^\delta)-y^\delta} + (1+\eta+\varepsilon) \delta \\
%      & d_k = p \Big[ \eta \norm{F(x_k^\delta)-y^\delta}+(1+\eta+\varepsilon)\delta\Big]
%              + (1-p) \norm{F(x_k^\delta)-y^\delta}
%     \end{align}
% %
%   \item [[3.2\!\!\!] ] set
%     \begin{align}
%       \label{eq:xkp1}
%       x_{k+1}^\delta & = x_k^\delta + h_k.
%     \end{align}
% %
%   \item [[3.3\!\!\!] ] stop to iterate at step\, $k^* \geq 1$\, s.t.\,
%         $\norm{F(x_{k^*}^\delta) - y^\delta} \leq \tau\delta$ \
%         for the first time.
%   \end{itemize}
% \end{itemize}
% \end{minipage}
% }

{\color{black} From now on we assume that $F'(x) \not= 0$ for $x \in B_\rho(x_0)$.
Notice that this fact follows from Assumption (A2) provided $F$ is
non-constant in $B_\rho(x_0)$.}

The remaining of this section is devoted to verify that, under assumptions
(A1), (A2) and (A3), Algorithm~I is well defined (see Theorem~\ref{th:wd}).
We open the discussion with Lemma~\ref{lm:a0}, where a collection of
preliminary results in Functional and Convex analysis is presented.

\begin{lemma}
  \label{lm:a0}
  {\color{black}
  Suppose $A:X\to Y$ ($A \not= 0$) is a continuous linear mapping, $\bar z\in X$, $b\in Y$
  has a non-zero projection onto the closure of the range of $A$}
  and define, for $\alpha>0$,
  \begin{align}
    \label{eq:za}
    z_{\alpha}&=\argmin_{z \in X}\norm{A(z-\bar z)-b}^2+\alpha\norm{z-\bar z}^2 .
    % \;\;
    % \mu_\alpha=\norm{A(z_\alpha-\bar z)-b}.
  \end{align}
  The following assertions hold
  \begin{enumerate}
  \item $z_\alpha = \bar z+(A^*A+\alpha I)^{-1} A^* b$;
  \item $\alpha \mapsto \norm{A(z_\alpha-\bar z) - b}$
        is a continuous, strictly increasing function on $\alpha>0$;
  \item $\lim\limits_{\alpha \to 0} \norm{A(z_\alpha-\bar z)-b}
        =\inf\limits_{z \in X}\norm{A(z-\bar z)-b}$;
  \item $\lim\limits_{\alpha \to \infty} \norm{A(z_\alpha-\bar z) - b} = \norm{b}$;
  \item $\norm{A(z_\alpha-\bar z)} \geq \norm{b} - \norm{A(z_\alpha-\bar z)-b} \geq 0$;
  \item $\alpha \leq \norm{A^*b}^2 \, \big[ \norm{b}(\norm{b}-\norm{A(z_\alpha-\bar z)-b}) \big]^{-1}$;
  \item For $z \in X$ and $\alpha > 0$
%   {\color{black} $$ \norm{z-\bar z}^2 - \norm{z-z_\alpha}^2 \geq
%     \norm{z_\alpha-\bar z}^2 +
%     \alpha^{-1} \big[ \norm{A(z_\alpha-\bar z)-b}^2 - \norm{A(z-\bar z)-b}^2 \big] . $$ }
  \begin{multline}
    \label{eq:pgain}
    \norm{z-\bar z}^2 - \norm{z-z_\alpha}^2  =  \norm{z_\alpha-\bar z}^2 +
    \frac 1 \alpha \big[ \norm{A(z_\alpha-\bar z)-b}^2 - \norm{A(z-\bar z)-b}^2 \big]
    + \frac 1 \alpha \norm{A(z - z_\alpha)}^2 ;
  \end{multline}
  \item For $z\in X$, $z \neq \bar z$, and $\alpha > 0$
  \begin{align}
    \label{eq:alpha_est}
    \alpha \geq \dfrac{\norm{A(z_\alpha-\bar z)-b}^2 - \norm{A(z-\bar z)-b}^2}{\norm{z-\bar z}^2} .
  \end{align}
  \end{enumerate}
\end{lemma}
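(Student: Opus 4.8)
The plan is to derive the remaining assertions from item~1, the normal equation it produces, and the defining minimality of $z_\alpha$. First I would substitute $w=z-\bar z$ and note that $J_\alpha(w):=\norm{Aw-b}^2+\alpha\norm{w}^2$ is strictly convex and coercive for $\alpha>0$, hence has a unique minimizer, characterised by the vanishing of its gradient, i.e. $(A^*A+\alpha I)w=A^*b$; since $A^*A+\alpha I$ is bounded below by $\alpha I$ it is boundedly invertible, giving item~1 with $w_\alpha:=z_\alpha-\bar z=(A^*A+\alpha I)^{-1}A^*b$. Throughout I will use the resulting identity $A^*(Aw_\alpha-b)=-\alpha w_\alpha$.

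For items~7 and~8: since $J_\alpha$ is quadratic with Hessian $2(A^*A+\alpha I)$ and is minimized at $z_\alpha$, one has the exact expansion $J_\alpha(z)-J_\alpha(z_\alpha)=\norm{A(z-z_\alpha)}^2+\alpha\norm{z-z_\alpha}^2$ for every $z$. Writing this out in terms of $\norm{A(z-\bar z)-b}$, $\norm{A(z_\alpha-\bar z)-b}$, $\norm{z-\bar z}$, $\norm{z_\alpha-\bar z}$ and dividing by $\alpha$ reproduces \eqref{eq:pgain}; then \eqref{eq:alpha_est} follows by discarding the nonnegative terms $\norm{z_\alpha-\bar z}^2$ and $\alpha^{-1}\norm{A(z-z_\alpha)}^2$, using $\norm{z-\bar z}^2-\norm{z-z_\alpha}^2\le\norm{z-\bar z}^2$, and dividing by $\norm{z-\bar z}^2>0$ (it is equally immediate from $J_\alpha(z_\alpha)\le J_\alpha(z)$).

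Item~5: the left inequality is the triangle inequality for $b=A(z_\alpha-\bar z)-\big(A(z_\alpha-\bar z)-b\big)$, and the right one follows from $J_\alpha(z_\alpha)\le J_\alpha(\bar z)=\norm{b}^2$; in the sharper form $\norm{b}^2-\norm{A(z_\alpha-\bar z)-b}^2=\norm{A(z_\alpha-\bar z)}^2+2\alpha\norm{w_\alpha}^2$ this also shows $\norm{A(z_\alpha-\bar z)-b}<\norm{b}$ whenever $w_\alpha\ne0$, i.e. (since the hypothesis on $b$ means exactly $A^*b\ne0$) for all $\alpha>0$. For item~6 I would combine three consequences of the normal equation --- $\inner{b}{b-Aw_\alpha}=\norm{b-Aw_\alpha}^2+\alpha\norm{w_\alpha}^2$, the identity just used, and $\norm{A^*b}^2=\norm{A^*Aw_\alpha}^2+2\alpha\norm{Aw_\alpha}^2+\alpha^2\norm{w_\alpha}^2$ --- with Cauchy--Schwarz $\inner{b}{b-Aw_\alpha}\le\norm{b}\,\norm{b-Aw_\alpha}$: the first two give $\norm{b}\big(\norm{b}-\norm{b-Aw_\alpha}\big)\le\norm{Aw_\alpha}^2+\alpha\norm{w_\alpha}^2$, and multiplying by $\alpha$ and comparing with the third yields $\alpha\norm{b}\big(\norm{b}-\norm{b-Aw_\alpha}\big)\le\alpha\norm{Aw_\alpha}^2+\alpha^2\norm{w_\alpha}^2\le\norm{A^*b}^2$; dividing by the strictly positive number $\norm{b}\big(\norm{b}-\norm{b-Aw_\alpha}\big)$ gives item~6.

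Finally, items~2--4 concern the dependence on $\alpha$. Continuity of $\alpha\mapsto\norm{A(z_\alpha-\bar z)-b}$ comes from resolvent continuity, $(A^*A+\alpha_1 I)^{-1}-(A^*A+\alpha_2 I)^{-1}=(\alpha_2-\alpha_1)(A^*A+\alpha_1 I)^{-1}(A^*A+\alpha_2 I)^{-1}$. For monotonicity, adding $J_{\alpha_1}(z_{\alpha_1})\le J_{\alpha_1}(z_{\alpha_2})$ and $J_{\alpha_2}(z_{\alpha_2})\le J_{\alpha_2}(z_{\alpha_1})$ gives $\norm{z_{\alpha_2}-\bar z}\le\norm{z_{\alpha_1}-\bar z}$ for $\alpha_1<\alpha_2$, and the first of these inequalities then gives $\norm{A(z_{\alpha_1}-\bar z)-b}\le\norm{A(z_{\alpha_2}-\bar z)-b}$, with equality forcing (by uniqueness of the minimizer and the normal equations) $w_{\alpha_1}=w_{\alpha_2}=0$, contradicting $A^*b\ne0$; so the growth is strict. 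For item~4, $\norm{w_\alpha}\le\norm{A^*b}/\alpha\to0$ together with $\norm{b-Aw_\alpha}^2=\norm{b}^2-\norm{Aw_\alpha}^2-2\alpha\norm{w_\alpha}^2$ gives the limit $\norm{b}$; for item~3, $J_\alpha(z_\alpha)\le J_\alpha(z)$ gives $\norm{A(z_\alpha-\bar z)-b}^2\le\norm{A(z-\bar z)-b}^2+\alpha\norm{z-\bar z}^2$ for every $z$, hence $\limsup_{\alpha\to0}\norm{A(z_\alpha-\bar z)-b}\le\inf_{z\in X}\norm{A(z-\bar z)-b}$, the reverse inequality being trivial. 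Alternatively, items~2--4 can all be read off at once from the spectral representation $\norm{A(z_\alpha-\bar z)-b}^2=\int\frac{\alpha^2}{(\lambda+\alpha)^2}\,d\norm{E_\lambda b}^2$, with $E$ the spectral measure of $AA^*$. The only step that is not wholly routine is item~6: spotting the combination of normal-equation identities that produces the clean bound $\norm{A^*b}^2$ (a spectral argument is also available) takes a moment; everything else is bookkeeping.
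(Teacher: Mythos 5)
Your proof is correct and complete. Note that the paper itself does not actually prove this lemma: it declares items~1 and~5 ``straightforward'', cites \cite{Gr84} for items~2--4, and refers to \cite{BLS19} for items~6 and~7, so there is no in-text argument to compare against line by line. What you supply is a valid self-contained derivation, and its engine --- the normal equation $A^*(Aw_\alpha-b)=-\alpha w_\alpha$ together with the exact quadratic expansion $J_\alpha(z)-J_\alpha(z_\alpha)=\norm{A(z-z_\alpha)}^2+\alpha\norm{z-z_\alpha}^2$ --- is precisely the mechanism underlying the cited sources, so the route is the expected one rather than a genuinely different one. Two points are worth flagging as the places where care is actually needed, and you handle both: the strictness in item~2 (your equality-forces-$w_{\alpha_1}=w_{\alpha_2}=0$ argument, which is where the hypothesis $A^*b\neq 0$, i.e.\ $b$ having nonzero projection onto $\overline{R(A)}$, enters), and the positivity of the denominator $\norm{b}(\norm{b}-\norm{A(z_\alpha-\bar z)-b})$ in item~6, which you obtain from the sharpened form $\norm{b}^2-\norm{A(z_\alpha-\bar z)-b}^2=\norm{A(z_\alpha-\bar z)}^2+2\alpha\norm{z_\alpha-\bar z}^2>0$. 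Your combination of the three normal-equation identities with Cauchy--Schwarz for item~6 checks out: it yields $\alpha\norm{b}(\norm{b}-\norm{b-Aw_\alpha})\le\alpha\norm{Aw_\alpha}^2+\alpha^2\norm{w_\alpha}^2\le\norm{A^*b}^2$ as claimed.
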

\begin{proof}
The proofs of items {\em 1.} and {\em 5.} are straightforward.
For a proof of items {\em 2.} to {\em 4.} we refer the reader to \cite{Gr84}.
The proofs of items {\em 6.} and {\em 7.} are adaptations of proofs presented
in \cite{BLS19}, and item {\em 8.} follows from item {\em 7.}
\end{proof}

The next Lemma provides an auxiliary estimate, which is used in the proof of
Proposition~\ref{pr:00}. This proposition is fundamental for establishing
that, as long as the discrepancy is not reached (see Step~[3.3] of Algorithm~I),
two key facts hold true:
(i) it is possible to find a pair $(\alpha_k \in \R^+, \ h_k \in X)$ solving
\eqref{eq:hk}, \eqref{eq:rb} in Step~[3.1] of Algorithm~I; \
(ii) for any sequence $\{ x_k^\delta \}$ generated by Algorithm~I, the
{\em iteration error} $\norm{x^\star - x_k^\delta}$ is monotonically decreasing
in $k$.

\begin{lemma} \label{lm:00}
Let Assumptions (A2) and (A3) hold. Then, for $x^\star$ as in (A3) it holds
$$
\norm{F(x) - y^\delta + F'(x)(x^\star - x)} \, \leq \,
\eta \norm{F(x) - y^\delta} + (1+\eta) \delta \, ,
\ \forall x \in B_\rho(x_0) \, .
$$
\end{lemma}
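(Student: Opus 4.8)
The plan is to use the triangle inequality to split the left-hand side into two pieces, one controlled by the weak tangential cone condition (A2) and one by the noise bound (A1-type estimate via (A3)). Specifically, I would write
$$
\norm{F(x) - y^\delta + F'(x)(x^\star - x)}
\le \norm{F(x) + F'(x)(x^\star - x) - F(x^\star)} + \norm{F(x^\star) - y^\delta}.
$$
The first term is exactly the linearization error appearing in the wTCC \eqref{eq:w-tcc}, with the roles $\bar x = x^\star$ and the given $x$, both of which lie in $B_\rho(x_0)$: indeed $x^\star \in B_{\rho/2}(x_0) \subset B_\rho(x_0)$ by (A3), and $x$ is assumed in $B_\rho(x_0)$ in the statement. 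Hence that term is bounded by $\eta \norm{F(x^\star) - F(x)} = \eta \norm{y - F(x)}$, using $F(x^\star) = y$ from (A3). The second term is bounded by $\delta$ directly from \eqref{eq:noisy-d}, since $F(x^\star) = y$ and $\norm{y - y^\delta} \le \delta$.

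Next I would reconcile the bound $\eta \norm{y - F(x)} + \delta$ with the desired bound $\eta \norm{F(x) - y^\delta} + (1+\eta)\delta$. This is another triangle inequality: $\norm{y - F(x)} \le \norm{y - y^\delta} + \norm{y^\delta - F(x)} \le \delta + \norm{F(x) - y^\delta}$. Multiplying by $\eta$ and adding the leftover $\delta$ gives
$$
\eta \norm{y - F(x)} + \delta \le \eta \norm{F(x) - y^\delta} + \eta\delta + \delta = \eta \norm{F(x) - y^\delta} + (1+\eta)\delta,
$$
which is precisely the claimed estimate. Assembling the two chains of inequalities completes the proof.

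There is essentially no obstacle here: the only subtlety worth a remark is checking that both points fed into the wTCC genuinely lie in the ball $B_\rho(x_0)$, which is why the hypothesis restricts $x$ to $B_\rho(x_0)$ and why (A3) places $x^\star$ in the smaller ball $B_{\rho/2}(x_0)$. Everything else is two applications of the triangle inequality together with the defining identity $F(x^\star) = y$ and the noise bound \eqref{eq:noisy-d}. I would present it in three short displayed lines and a concluding sentence.
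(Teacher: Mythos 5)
Your proof is correct and follows essentially the same route as the paper: the same decomposition by adding and subtracting $F(x^\star)$, the same application of the wTCC (A2) to the linearization error, and the same final triangle inequality $\norm{y - F(x)} \le \delta + \norm{F(x)-y^\delta}$ combined with $\norm{F(x^\star)-y^\delta}\le\delta$. No issues.
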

\begin{proof}
Since $x$, $x^\star \in B_\rho(x_0)$, it follows from (A2) that
\begin{align*}
\norm{F(x)-y^\delta + F'(x)(x^\star - x)}
& = \,    \norm{F(x) - F(x^\star) + F'(x)(x^\star - x) + F(x^\star) - y^\delta}
\\
& \leq \, \eta\norm{F(x) - F(x^\star)} + \norm{F(x^\star) - y^\delta}
\\
& \leq \, \eta \big(\norm{F(x) - y^\delta} + \norm{y^\delta - F(x^\star)} \big)
          + \norm{F(x^\star) - y^\delta} \, .
  \end{align*}
The conclusion follows from this inequality, (A3) and \eqref{eq:noisy-d}.
\end{proof}

\begin{proposition} \label{pr:00}
  Let Assumptions (A2) and (A3) hold. Given $x \in B_\rho(x_0)$, define
  \begin{align}
    (0, +\infty) \ni \alpha \ \mapsto \ 
    \xi_{\alpha} & := \argmin_{\xi\in X}
    \norm{F(x)-y^\delta+F'(x)(\xi-x)}^2 + \alpha \norm{\xi-x}^2 \in X \, .
  \end{align}
  1. \ For every $\alpha > 0$ it holds
  \begin{align} \label{eq:xi_alpha}
  \norm{F'(x)} \, \norm{\xi_\alpha - x} & \geq \norm{F(x) - y^\delta} -
  \norm{F(x) - y^\delta + F'(x)(\xi_\alpha - x)} \, .
  \end{align}
  Additionally, if \ $\norm{F(x) - y^\delta} > \tau \delta$, define the scalars
  \begin{align*}
    c & \, := \, (1+\varepsilon) \eta \norm{F(x) - y^\delta} + (1+\eta) \delta \, ,
    \\
    d & \, := \, p \Big[ (1+\varepsilon) \eta \norm{F(x) - y^\delta} + (1+\eta) \delta \Big]
                 + (1-p) \norm{F(x) - y^\delta} \, ,
  \end{align*}
  and the set \ $J$ $:=$ $\{ \alpha > 0$ $:$ $\norm{F(x) - y^\delta + F'(x)(\xi_\alpha - x)}
  \in [c , d] \}$. Then \\
  2. $J$ is a non-empty, non-degenerate interval; \\
  3. For $\alpha \in J$ and $x^\star$ as in (A3) it holds
  \begin{align}
    \norm{x^\star - x}^2 - \norm{x^\star - \xi_\alpha}^2 \, \geq \, \norm{\xi_\alpha - x}^2 .
  \end{align}
\end{proposition}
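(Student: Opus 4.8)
The plan is to obtain all three assertions by specialising Lemma~\ref{lm:a0} and Lemma~\ref{lm:00} to $A=F'(x)$, $\bar z=x$, $b=y^\delta-F(x)$; with these identifications the minimiser $z_\alpha$ of \eqref{eq:za} is exactly $\xi_\alpha$ and $\norm{A(z_\alpha-\bar z)-b}=\norm{F(x)-y^\delta+F'(x)(\xi_\alpha-x)}$. For item~1, I would just invoke Lemma~\ref{lm:a0}(5): it gives $\norm{F'(x)(\xi_\alpha-x)}\ge\norm{F(x)-y^\delta}-\norm{F(x)-y^\delta+F'(x)(\xi_\alpha-x)}\ge0$, the underlying bound $\norm{b}\ge\norm{A(z_\alpha-\bar z)-b}$ being valid for every $\alpha>0$ because $\xi_\alpha$ beats the admissible point $\xi=x$ in the minimisation; combining this with $\norm{F'(x)(\xi_\alpha-x)}\le\norm{F'(x)}\,\norm{\xi_\alpha-x}$ yields \eqref{eq:xi_alpha}.

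For item~2, I would set $\phi(\alpha):=\norm{F(x)-y^\delta+F'(x)(\xi_\alpha-x)}$. By Lemma~\ref{lm:a0}(2)--(4), once its hypotheses are checked, $\phi$ is continuous and strictly increasing on $(0,\infty)$, with $\lim_{\alpha\to0^+}\phi(\alpha)=m:=\inf_{\xi\in X}\norm{F(x)-y^\delta+F'(x)(\xi-x)}$ and $\lim_{\alpha\to\infty}\phi(\alpha)=\norm{F(x)-y^\delta}$. The real content is a handful of scalar estimates. Lemma~\ref{lm:00} at $\xi=x^\star$ gives $m\le\eta\norm{F(x)-y^\delta}+(1+\eta)\delta$, which is $\le c$ since $\varepsilon\ge0$; and since $\norm{F(x)-y^\delta}>\tau\delta$ with $\tau>(1+\eta)/(1-\eta)$, the same quantity is $<\norm{F(x)-y^\delta}$, so in particular $m<\norm{F(x)-y^\delta}$ — which is precisely the nonzero-projection hypothesis needed to apply Lemma~\ref{lm:a0}. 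The constraint on $\varepsilon$ in \eqref{def:tau-sigma} is equivalent to $(1+\varepsilon)\eta\tau<\tau-1-\eta$, which together with $\delta<\norm{F(x)-y^\delta}/\tau$ yields $c<\norm{F(x)-y^\delta}$, hence $c<d<\norm{F(x)-y^\delta}$ because $d=pc+(1-p)\norm{F(x)-y^\delta}$ is a convex combination with $0<p<1$. Thus $[c,d]$ is a non-degenerate interval sitting inside $[m,\norm{F(x)-y^\delta})$, and since $\phi$ maps $(0,\infty)$ homeomorphically and increasingly onto $(m,\norm{F(x)-y^\delta})$, the set $J=\{\alpha>0:\phi(\alpha)\in[c,d]\}$ is a non-empty non-degenerate interval — the closed interval $[\phi^{-1}(c),\phi^{-1}(d)]$ when $m<c$, or $(0,\phi^{-1}(d)]$ in the boundary case $m=c$.

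For item~3, given $\alpha\in J$ I would plug $z=x^\star$ into the gain identity \eqref{eq:pgain}:
\begin{multline*}
  \norm{x^\star-x}^2-\norm{x^\star-\xi_\alpha}^2 = \norm{\xi_\alpha-x}^2 \\
   {}+\tfrac1\alpha\big(\phi(\alpha)^2-\norm{F(x)-y^\delta+F'(x)(x^\star-x)}^2\big)+\tfrac1\alpha\norm{F'(x)(x^\star-\xi_\alpha)}^2 .
\end{multline*}
The last term is nonnegative, and the bracket is nonnegative because $\phi(\alpha)\ge c\ge\eta\norm{F(x)-y^\delta}+(1+\eta)\delta\ge\norm{F(x)-y^\delta+F'(x)(x^\star-x)}$ by Lemma~\ref{lm:00}, all quantities being nonnegative; discarding these two contributions gives the claimed inequality.

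The steps in items~1 and~3 are routine bookkeeping. The hard part will be the scalar inequalities in item~2: showing, via the constraints in \eqref{def:tau-sigma} on $\tau$ and $\varepsilon$ and the running hypothesis $\norm{F(x)-y^\delta}>\tau\delta$, that $m\le c<d<\norm{F(x)-y^\delta}$ so that $[c,d]$ genuinely meets the (open) range of $\phi$, and correctly handling the degenerate boundary case $m=c$ so that $J$ still comes out a non-degenerate interval.
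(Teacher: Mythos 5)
Your proposal is correct and follows essentially the same route as the paper: item~1 via Lemma~\ref{lm:a0}(5), item~2 by combining Lemma~\ref{lm:00} with the monotonicity and limit statements of Lemma~\ref{lm:a0} and the scalar estimates derived from \eqref{def:tau-sigma}, and item~3 via the gain identity \eqref{eq:pgain}. The only (harmless) differences are that you explicitly verify the nonzero-projection hypothesis of Lemma~\ref{lm:a0} and discuss the boundary case $m=c$, whereas the paper obtains the strict inequality $m<c$ directly from \eqref{eq:c_estim}.
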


\begin{proof}
We adopt the notation: \,$z = x^\star$, \,$z_\alpha = \xi_\alpha$, \,$\bar z = x$,
\,$b = y^\delta - F(x)$ \,and \,$A = F'(x)$.

\noindent
{\bf Add~1.:} Equation \eqref{eq:xi_alpha} follows from Lemma~\ref{lm:a0} (item~5.).
% with $z_\alpha = \xi_\alpha$, $\bar z = x$, $b = y^\delta - F(x)$ \ and \ $A = F'(x)$.

\noindent
{\bf Add~2.:} From the definition of $\varepsilon$ and $\tau$ in \eqref{def:tau-sigma}
it follows that
$$
c \ < \ \big[ \eta\tau + \tau(1-\eta) - (1+\eta) \big] \, \tau^{-1} \norm{F(x)-y^\delta}
    + (1+\eta) \delta \ \leq \ \norm{F(x)-y^\delta}
$$
(the last inequality follows from $\delta \leq \tau^{-1} \norm{F(x) - y^\delta}$).
Since $d$ is a proper convex combination of $c$ and $\norm{F(x)-y^\delta}$, we have
\begin{align} \label{eq:d_estim}
c \, < \, d \, < \, \norm{F(x) - y^\delta} \, .
\end{align}
On the other hand, it follows from Lemma~\ref{lm:00} that
\begin{align} \label{eq:c_estim}
\norm{F(x) - y^\delta + F'(x)(x^\star - x)} \, \leq \,
\eta \norm{F(x) - y^\delta} + (1+\eta) \delta \, < \, c \, .
\end{align}
From \eqref{eq:d_estim}, \eqref{eq:c_estim} it follows that
$$
\inf_z \norm{F(x) - y^\delta + F'(x)(z - x)} \, < \, 
     c \, < \, d \, < \, \norm{F(x)-y^\delta} \, .
$$
Assertion~2. {\color{black} follows} from this inequality and Lemma~\ref{lm:a0} (items~2., 3. and 4.).
% with $z_\alpha = \xi_\alpha$, $\bar z = x$, $b = y^\delta - F(x)$ \ and \ $A = F'(x)$.

\noindent
{\bf Add~3.:} From \eqref{eq:c_estim} and the assumption $\alpha \in J$, we conclude that
$$
\norm{F(x) - y^\delta + F'(x)(x^\star - x)} \, < \, c \, \leq \,
\norm{F(x) - y^\delta + F'(x)(\xi_\alpha - x)} \, .
$$
Assertion~3. {\color{black} follows} from this inequality and Lemma~\ref{lm:a0} (item~7.).
% with $z = x^\star$, $z_\alpha = \xi_\alpha$, $\bar z = x$, $b = y^\delta - F(x)$ \ and \ $A = F'(x)$.
\end{proof}

We are now ready to state and prove the main result of this section.

\begin{theorem} \label{th:wd}
Let Assumptions (A1), (A2) and (A3) hold. Then, Algorithm~I is well defined,
i.e., for $k < k^*$ (the stopping index defined in Step~[3.3]) there
exists a pair $(\alpha_k \in \R^+, \ h_k \in X)$ solving
\eqref{eq:hk}, \eqref{eq:rb}.
Moreover, $k^*$ is finite and any sequence $\set{x_k^\delta}$ generated
by this algorithm satisfies
\begin{align} \label{eq:kgain}
 \norm{x^\star - x_k^\delta}^2 - \norm{x^\star - x_{k+1}^\delta}^2
 \, \geq \,
 \norm{x_k^\delta - x_{k+1}^\delta}^2 \, , \ 0 \leq k < k^* \, .
\end{align}
\end{theorem}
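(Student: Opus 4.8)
The plan is to verify the three assertions of the theorem by an induction on $k$ that simultaneously tracks (a) that the iterate $x_k^\delta$ stays inside $B_\rho(x_0)$, so that Assumptions (A1)--(A3) and Proposition~\ref{pr:00} are applicable at $x = x_k^\delta$, and (b) the monotonicity estimate \eqref{eq:kgain}. The base case is $x_0^\delta = x_0 \in B_\rho(x_0)$ trivially. For the induction step, suppose $x_k^\delta \in B_\rho(x_0)$ and $k < k^*$, so that by Step~[3.3] the discrepancy has not been met, i.e. $\norm{F(x_k^\delta) - y^\delta} > \tau\delta$. Then Proposition~\ref{pr:00} applies with $x = x_k^\delta$: assertion~2 gives that the admissible set $J$ (which, comparing \eqref{eq:ck}--\eqref{eq:dk} with the definitions of $c, d$ in the proposition, is exactly the set of admissible $\alpha_k$ in \eqref{eq:rb}) is a non-empty non-degenerate interval, so a valid pair $(\alpha_k, h_k)$ exists and Step~[3.1] is well posed; and assertion~3 gives, with $\xi_{\alpha_k} = x_k^\delta + h_k = x_{k+1}^\delta$, precisely
\begin{align*}
\norm{x^\star - x_k^\delta}^2 - \norm{x^\star - x_{k+1}^\delta}^2 \ \geq \ \norm{\xi_{\alpha_k} - x_k^\delta}^2 \ = \ \norm{x_{k+1}^\delta - x_k^\delta}^2 ,
\end{align*}
which is \eqref{eq:kgain}. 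It remains to close the induction by showing $x_{k+1}^\delta \in B_\rho(x_0)$: from the estimate just proved, $\norm{x^\star - x_{k+1}^\delta} \leq \norm{x^\star - x_k^\delta} \leq \cdots \leq \norm{x^\star - x_0}$, and since $x^\star \in B_{\rho/2}(x_0)$ by (A3), a triangle inequality gives $\norm{x_{k+1}^\delta - x_0} \leq \norm{x_{k+1}^\delta - x^\star} + \norm{x^\star - x_0} \leq 2\norm{x^\star - x_0} < \rho$. This also shows the iteration error is monotonically non-increasing, reproving \eqref{eq:kgain} for all $0 \le k < k^*$.

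The remaining point is finiteness of $k^*$. I would argue by contradiction: if $k^* = \infty$, then $\norm{F(x_k^\delta) - y^\delta} > \tau\delta$ for all $k$, and by \eqref{eq:kgain} the series $\sum_k \norm{x_{k+1}^\delta - x_k^\delta}^2 = \sum_k \norm{h_k}^2$ converges (it is bounded by $\norm{x^\star - x_0}^2$), so in particular $\norm{h_k} \to 0$. The idea is now to extract from the range condition \eqref{eq:rb}, \eqref{eq:ck} a lower bound on the nonlinear residual that forces a contradiction. Indeed, the lower endpoint of the range is $c_k = (1+\varepsilon)\eta\norm{F(x_k^\delta)-y^\delta} + (1+\eta)\delta$, and combining \eqref{eq:rb} with assertion~1 of Proposition~\ref{pr:00} (inequality \eqref{eq:xi_alpha}) and the bound $\norm{F'(x_k^\delta)} \le C$ from (A1) yields
\begin{align*}
C\norm{h_k} \ \geq \ \norm{F'(x_k^\delta)}\,\norm{h_k} \ \geq \ \norm{F(x_k^\delta)-y^\delta} - \norm{F(x_k^\delta)-y^\delta+F'(x_k^\delta)h_k} \ \geq \ \norm{F(x_k^\delta)-y^\delta} - d_k ,
\end{align*}
and since $d_k = p\,c_k + (1-p)\norm{F(x_k^\delta)-y^\delta}$, the right-hand side equals $(1-p)\bigl(\norm{F(x_k^\delta)-y^\delta} - c_k\bigr)$. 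Using $\norm{F(x_k^\delta)-y^\delta} > \tau\delta$ together with the choice of $\tau$ and $\varepsilon$ in \eqref{def:tau-sigma} (which, as in the proof of Add~2 of Proposition~\ref{pr:00}, gives $c_k \le \beta\,\norm{F(x_k^\delta)-y^\delta}$ for some fixed $\beta < 1$), we get $\norm{F(x_k^\delta)-y^\delta} - c_k \ge (1-\beta)\norm{F(x_k^\delta)-y^\delta} > (1-\beta)\tau\delta > 0$, a constant independent of $k$. Hence $C\norm{h_k} \ge (1-p)(1-\beta)\tau\delta > 0$ for all $k$, contradicting $\norm{h_k}\to 0$. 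Therefore $k^*$ is finite.

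The main obstacle I anticipate is bookkeeping the constant $\beta$ (equivalently, re-deriving the bound $c_k < \norm{F(x_k^\delta)-y^\delta}$ quantitatively rather than qualitatively): the proof of Proposition~\ref{pr:00} only records the strict inequality $c < \norm{F(x)-y^\delta}$, but for the finiteness argument one needs a uniform gap, which is where the strict inequalities in the definition \eqref{def:tau-sigma} of $\varepsilon$ and $\tau$ must be exploited with care — specifically, that $\varepsilon < \frac{\tau(1-\eta)-(1+\eta)}{\eta\tau}$ forces $(1+\varepsilon)\eta + \frac{1+\eta}{\tau} < 1$, so one can take $\beta = (1+\varepsilon)\eta + \frac{1+\eta}{\tau}$. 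Everything else — existence of $(\alpha_k, h_k)$, the monotonicity \eqref{eq:kgain}, and staying inside $B_\rho(x_0)$ — follows mechanically from Proposition~\ref{pr:00} and (A3) once the induction is set up, so the write-up should be short.
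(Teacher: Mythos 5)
Your proposal is correct and follows essentially the same route as the paper: induction via Proposition~\ref{pr:00} (items 2 and 3) for existence of $(\alpha_k,h_k)$ and for \eqref{eq:kgain}, the triangle inequality with (A3) to keep iterates in $B_\rho(x_0)$, and a uniform positive lower bound on $\norm{h_k}$ obtained from item 1 together with the upper endpoint $d_k$ to force finiteness of $k^*$ (the paper sums \eqref{eq:kgain} to get $k^*\Psi^2 < \norm{x^\star-x_0}^2$ rather than arguing by contradiction with $\norm{h_k}\to 0$, which is equivalent). One harmless slip: since $d_k = p\,c_k + (1-p)\norm{F(x_k^\delta)-y^\delta}$, one has $\norm{F(x_k^\delta)-y^\delta} - d_k = p\bigl(\norm{F(x_k^\delta)-y^\delta} - c_k\bigr)$, not $(1-p)(\cdots)$; the bound remains a positive constant independent of $k$ either way.
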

\begin{proof}
Let step $k = 0$ of Algorithm I be its initialization.
We may assume $\norm{F(x_0) - y^\delta} > \tau\delta$
(otherwise the algorithm stops with $k^* = 0$, and the
theorem is trivial).

We use induction for proving this result.
For $k=0$, it follows from Proposition~\ref{pr:00}~(item~2.)
with $x = x_0$, the existence of $(\alpha_0 \in \R^+, \, h_0 \in X)$
solving \eqref{eq:hk}, \eqref{eq:rb}.
Moreover, it follows from Proposition~\ref{pr:00}~(item~3.)
with $x = x_0$, that \eqref{eq:kgain} holds for $k=0$.

Assume by induction that Algorithm~I is well defined up to
step $k_0 > 0$, and that \eqref{eq:kgain} holds for $k = 0,
\dots k_0-1$.
There are two possible scenarios to consider: \\
$\bullet$ \ Case~I: $\norm{F(x_{k_0}^\delta) - y^\delta} \leq \tau\delta$.\\
In this case, the algorithm terminates at iteration $k^* = k_0 \geq 1$,
concluding the proof. \\
$\bullet$ \ Case~II: $\norm{F(x_{k_0}^\delta) - y^\delta} > \tau\delta$.\\
Due to the inductive assumption, $\norm{x^\star - x_{k_0}^\delta} \leq
\norm{x^\star - x_{k_0-1}^\delta} \leq \dots \leq \norm{x^\star - x_0^\delta}$.
From (A3) follows
$$
 \norm{x_{k_0}^\delta - x_0^\delta}
 \, \leq \,
 \norm{x_{k_0}^\delta - x^\star} + \norm{x^\star - x_0^\delta}
 \, \leq \,
 2 \norm{x^\star - x_0^\delta} < \rho \, .
$$
Hence, $x_{k_0}^\delta \in B_\rho(x_0)$. Proposition~\ref{pr:00}~(item~2.)
with $x = x_{k_0}^\delta$, guarantees the existence of a pair
$(\alpha_{k_0} \in \R^+, \, h_{k_0} \in X)$ solving \eqref{eq:hk},
\eqref{eq:rb} as well as the existence of $x_{k_0+1}^\delta \in X$.
The validity of \eqref{eq:kgain} for $k = k_0$ follows from
Proposition~\ref{pr:00}~(item~3.) with $x = x_{k_0}^\delta$.
\medskip

In order to verify the finiteness of the stopping index $k^*$,
notice that, from Proposition~\ref{pr:00}~(item~1.) with \
$x = x_k^\delta$, \ $\alpha = \alpha_k$ \ and \ $\xi_\alpha = x_{k+1}^\delta$,
it follows
$$
  \norm{F'(x_k^\delta)} \, \norm{x_{k+1}^\delta - x_k^\delta} \, \geq \,
  \norm{F(x_k^\delta)-y^\delta} -
  \norm{F(x_k^\delta) - y^\delta + F'(x_k^\delta)(x_{k+1}^\delta - x_k^\delta)} ,
  \ k = 0, \dots k^* - 1 .
$$
From this inequality and the definition of $c_k$ and $d_k$ in Step~[3.1],
it follows that
\begin{eqnarray*}
  \norm{F'(x_k^\delta)} \, \norm{x_{k+1}^\delta - x_k^\delta}
  & \! \geq \! & \norm{F(x_k^\delta)-y^\delta} - d_k
  \      =         \ p \big[ \norm{F(x_k^\delta)-y^\delta} - c_k \big] \\
  & \! = \!    & p \big[ (1-(1+\varepsilon)\eta) \, \norm{F(x_k^\delta)-y^\delta} - (1+\eta) \delta \big] ,
  \ k = 0, \dots k^* - 1 .
\end{eqnarray*}
Since $\norm{F(x_k^\delta)-y^\delta} > \tau \delta$, $0 \leq k < k^*$
and $\varepsilon < \frac 1 \eta - 1$ (see \eqref{def:tau-sigma}), we obtain
from the last inequality
\begin{eqnarray*}
  \norm{F'(x_k^\delta)} \, \norm{x_{k+1}^\delta - x_k^\delta}
  \ \geq \ p \big[ (1-(1+\varepsilon)\eta) \, \tau - (1+\eta) \big] \, \delta
  \   =  \ p\delta \, \eta\tau \Big[ \dfrac{\tau(1-\eta)-(1+\eta)}{\eta\tau} - \varepsilon \Big] ,
\end{eqnarray*}
for $k = 0, \dots k^* - 1$.%
% \footnote{\color{black} Notice that, $F'(x) \not= 0$ for $x \in B_\rho(x_0)$ follows
% from Assumption (A2) provided $F$ is non-constant in $B_\rho(x_0)$.}
Now, Assumption (A1) implies
\begin{equation} \label{eq:step-estimate}
  \norm{x_{k+1}^\delta - x_k^\delta}
  \ \geq \ \dfrac{p\delta \, \eta\tau}{C}
           \Big[ \dfrac{\tau(1-\eta)-(1+\eta)}{\eta\tau} - \varepsilon \Big]
  \ =: \ \Psi > 0 \, , \ k = 0, \dots k^* - 1 .
\end{equation}
Adding up inequality \eqref{eq:kgain} for $k = 0, \dots\ k^*-1$ and using
\eqref{eq:step-estimate} we finally obtain
$$
 \norm{x^\star - x_0}^2 \, > \,
 \norm{x^\star - x_0}^2 - \norm{x^\star - x_{k^*}^\delta}^2 \, > \,
 \textstyle\sum_{k=0}^{k^*-1} \norm{{\color{black} x_k^\delta - x_{k+1}^\delta}}^2 \, > \,
 k^* \Psi^2 \, ,
$$
from where the finiteness of the stopping index $k^*$ follows.
\end{proof}

\begin{remark}
Assumption (A1) is used only once in the proof of Theorem~\ref{th:wd},
namely in the derivation of \eqref{eq:step-estimate}, which is used to
prove finiteness of the stopping index $k^*$.
\end{remark}

% An imediate consequence of Theorem~\ref{th:wd} is the fact that,
% in the case of exact data, any sequence $\set{x_k}$ generated by
% Algorithm~I is a {\em reasonable wanderer} in the sense of Browder
% and Petryshyn \cite{BrPe67}.

\begin{corollary} \label{cor:reas_wanderer}
Let Assumptions (A1), (A2) and (A3) hold, and assume the data is
exact, i.e., $\delta = 0$. Then, any sequence $\set{x_k}$ generated
by Algorithm~I satisfies
\begin{align} \label{eq:reas_wanderer}
 \textstyle\sum_{k=0}^\infty \ \norm{x_k - x_{k+1}}^2 \, < \, \infty .
\end{align}
\end{corollary}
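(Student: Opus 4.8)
The plan is to derive the summability of the squared steps directly from the telescoping inequality \eqref{eq:kgain}, which holds for all $0 \le k < k^*$ by Theorem~\ref{th:wd}. The key observation is that when $\delta = 0$, the stopping test in Step~[2] and Step~[3.3] reads $\norm{F(x_k) - y^\delta} \le \tau \cdot 0 = 0$; since $c_k = (1+\varepsilon)\eta\norm{F(x_k)-y}$ and $d_k \le \norm{F(x_k)-y}$, the constraint \eqref{eq:rb} together with $\norm{F(x_k)-y}>0$ forces the algorithm never to satisfy the stopping criterion unless $F(x_k)=y$ exactly. Hence either the algorithm stops at some finite $k$ with $F(x_k) = y$ (in which case, tracing the construction, $h_k = 0$ and all subsequent steps vanish, so the sum is finite trivially), or $k^* = \infty$ and \eqref{eq:kgain} holds for every $k \ge 0$.

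In the nontrivial case I would argue as follows. By Theorem~\ref{th:wd} the iteration error $\norm{x^\star - x_k}$ is monotonically nonincreasing in $k$ (this is an immediate consequence of \eqref{eq:kgain}, since the right-hand side is nonnegative), so the sequence $\set{\norm{x^\star - x_k}^2}$ is nonincreasing and bounded below by $0$, hence convergent. Summing \eqref{eq:kgain} over $k = 0, \dots, N-1$ telescopes to
\begin{align*}
 \textstyle\sum_{k=0}^{N-1} \norm{x_k - x_{k+1}}^2 \, \leq \, \norm{x^\star - x_0}^2 - \norm{x^\star - x_N}^2 \, \leq \, \norm{x^\star - x_0}^2 \, .
\end{align*}
Since the partial sums are bounded above by the fixed constant $\norm{x^\star-x_0}^2$ (note $x_0 \in B_{\rho/2}(x_0)$ trivially and $x^\star \in B_{\rho/2}(x_0)$ by (A3), so this constant is finite, indeed $<\rho^2/4$) and the summands are nonnegative, letting $N \to \infty$ yields \eqref{eq:reas_wanderer}.

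The only genuinely delicate point is justifying that \eqref{eq:kgain} is available for \emph{all} $k \in \N$ when $\delta = 0$, i.e. that $k^* = \infty$ (or else the tail is identically zero). For $\delta = 0$ one must check that as long as $F(x_k) \neq y$ the Step~[3.1] problem is still solvable: Proposition~\ref{pr:00} was stated under the hypothesis $\norm{F(x)-y^\delta} > \tau\delta$, which with $\delta = 0$ becomes $\norm{F(x_k) - y} > 0$, exactly the non-termination condition; and its item~3 gives \eqref{eq:kgain}. An induction identical to the one in the proof of Theorem~\ref{th:wd} — using \eqref{eq:kgain} to keep $x_k \in B_\rho(x_0)$ via $\norm{x_k - x_0} \le 2\norm{x^\star - x_0} < \rho$ — shows the algorithm is well defined at every step. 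This is more bookkeeping than obstacle; the substantive content is entirely contained in \eqref{eq:kgain}, and the corollary is essentially a one-line consequence once the indexing is settled. I would therefore present it as: apply Theorem~\ref{th:wd}, telescope \eqref{eq:kgain}, and pass to the limit.
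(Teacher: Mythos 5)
Your argument is correct and is essentially the paper's own proof: both telescope inequality \eqref{eq:kgain} to bound the partial sums $\sum_{k=0}^{n}\norm{x_k-x_{k+1}}^2$ by $\norm{x^\star-x_0}^2$ and then let $n\to\infty$. Your additional care about whether \eqref{eq:kgain} is available for all $k$ when $\delta=0$ (i.e.\ that either $k^*=\infty$ or the tail of the series vanishes) is a reasonable bookkeeping point that the paper leaves implicit, but it does not change the substance of the argument.
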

\begin{proof}
Adding up inequality \eqref{eq:kgain}, we obtain
$$
 \norm{x^\star - x_0}^2 - \norm{x^\star - x_{n+1}}^2 \, > \,
 \textstyle\sum_{k=0}^n \norm{x_k - x_{k+1}}^2 , \ \forall n > 0
$$
and the assertion follows.
\end{proof}

% \begin{align*}
%   \norm{F'(x_k^\delta)}\;\norm{x_k^\delta-x_{k+1}^\delta}
%   \geq  \norm{F(x_k)-y^\delta}-d_k \geq p(1-\eta)\left[
%   \norm{F(x_k)-y^\delta}-\dfrac{1+\eta}{1-\eta}\delta\right]
% \end{align*}

We conclude this section obtaining an estimate for the Lagrange
multipliers $\{ \alpha_k \}$ defined in Step~[3.1] of Algorithm~I.

\begin{proposition} \label{pr:alphaK_est}
Let Assumptions (A2) and (A3) hold. Then the Lagrange multipliers
$\{ \alpha_k \}$ in Algorithm~I satisfy
\begin{align} \label{eq:alphaK_est}
 \alpha_k \geq \rho^{-2} \varepsilon \eta \, \norm{F(x_k^\delta) - y^\delta} \,
 \big[ (1+\varepsilon) \eta \norm{F(x_k^\delta) - y^\delta} + (1+\eta) \delta \big] .
\end{align}
\end{proposition}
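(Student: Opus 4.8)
The plan is to derive \eqref{eq:alphaK_est} directly from item~8 of Lemma~\ref{lm:a0}, i.e.\ from inequality~\eqref{eq:alpha_est}, using the same dictionary as in the proof of Proposition~\ref{pr:00}: $\bar z = x_k^\delta$, $A = F'(x_k^\delta)$, $b = y^\delta - F(x_k^\delta)$, $z = x^\star$, and $z_{\alpha_k} = \xi_{\alpha_k} = x_k^\delta + h_k = x_{k+1}^\delta$ (the last two identities by \eqref{eq:hk}, \eqref{eq:xkp1} and item~1 of Lemma~\ref{lm:a0}). Before invoking \eqref{eq:alpha_est} one must check that $z \neq \bar z$, i.e.\ $x^\star \neq x_k^\delta$: were they equal, then $F(x_k^\delta) = F(x^\star) = y$, whence $\norm{F(x_k^\delta) - y^\delta} = \norm{y - y^\delta} \leq \delta \leq \tau\delta$ (using $\tau > 1$), so the stopping rule of Step~[3.3] (or Step~[2]) would already have been met, contradicting the fact that $\alpha_k$ is defined, i.e.\ $k < k^*$. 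Note also that \eqref{eq:alpha_est} is a consequence of the identity \eqref{eq:pgain} and therefore does not require the range hypothesis on $b$ appearing in Lemma~\ref{lm:a0}. With this, \eqref{eq:alpha_est} reads
\[
 \alpha_k \ \geq \ \frac{\norm{F(x_k^\delta) - y^\delta + F'(x_k^\delta)(x_{k+1}^\delta - x_k^\delta)}^2 - \norm{F(x_k^\delta) - y^\delta + F'(x_k^\delta)(x^\star - x_k^\delta)}^2}{\norm{x^\star - x_k^\delta}^2} \, .
\]

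Next I would bound the numerator. By \eqref{eq:rb} and \eqref{eq:ck} (recalling that $x_{k+1}^\delta - x_k^\delta = h_k$), the first term is $\geq c_k = (1+\varepsilon)\eta\norm{F(x_k^\delta)-y^\delta} + (1+\eta)\delta$, while Lemma~\ref{lm:00} applied at $x = x_k^\delta \in B_\rho(x_0)$ bounds the second term by $\eta\norm{F(x_k^\delta)-y^\delta} + (1+\eta)\delta$, which is strictly below $c_k$. Abbreviating $r = \norm{F(x_k^\delta)-y^\delta}$ and $s = (1+\eta)\delta$, the numerator is therefore at least
\[
 \big[(1+\varepsilon)\eta r + s\big]^2 - \big[\eta r + s\big]^2 \ = \ \varepsilon\eta r\big[(2+\varepsilon)\eta r + 2s\big] \ \geq \ \varepsilon\eta r\big[(1+\varepsilon)\eta r + s\big] \ = \ \varepsilon\eta r\, c_k \, ,
\]
the middle step being a difference of squares and the inequality simply discarding the nonnegative surplus $\eta r + s$.

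For the denominator, the monotonicity of the iteration error proved in Theorem~\ref{th:wd} (inequality~\eqref{eq:kgain}) together with (A3) gives $\norm{x^\star - x_k^\delta} \leq \norm{x^\star - x_0} < \rho/2 < \rho$, hence $\norm{x^\star - x_k^\delta}^2 < \rho^2$. Combining the three estimates yields $\alpha_k \geq \rho^{-2}\varepsilon\eta r\, c_k$, which is exactly \eqref{eq:alphaK_est}; in particular (A1) plays no role, consistently with the hypotheses of the Proposition. I do not anticipate a genuine obstacle here: the only points deserving care are the verification $x^\star \neq x_k^\delta$ (so that \eqref{eq:alpha_est} is legitimate) and the elementary algebra turning $c_k^2 - (\eta r + s)^2$ into a multiple of $c_k$ itself.
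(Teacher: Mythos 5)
Your argument is correct and follows essentially the same route as the paper: the same dictionary $\bar z = x_k^\delta$, $z_\alpha = x_{k+1}^\delta$, $z = x^\star$ fed into item~8 of Lemma~\ref{lm:a0}, the numerator bounded below via \eqref{eq:rb}/\eqref{eq:ck} and Lemma~\ref{lm:00}, a difference-of-squares factorization, and $\norm{x^\star - x_k^\delta} \le \rho$ for the denominator. The only (harmless) differences are cosmetic: you square the two bounds before factoring whereas the paper factors $X^2 - Y^2 = (X-Y)(X+Y)$ first, and you explicitly verify $x^\star \neq x_k^\delta$, a point the paper leaves implicit.
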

\begin{proof}
Take $\alpha = \alpha_k$, \ $z_\alpha = x_{k+1}^\delta$, \ $\bar z = x_k^\delta$,
\ $z = x^\star$, \ $b = y^\delta - F(x_k^\delta)$ \ and \ $A = F'(x_k^\delta)$.
Arguing as in the proof of Lemma~\ref{lm:00} we obtain
\begin{equation} \label{eq:alpha_aux1}
\norm{A(z - \bar z) - b} \, \leq \, \eta \norm{b} + (1+\eta) \delta \, .
\end{equation}
On the other hand, it follows from Step~[3.1] that
\begin{equation} \label{eq:alpha_aux2}
\norm{A(z_\alpha - \bar z) - b} \, \geq \,
(1+\varepsilon) \eta \norm{b} + (1+\eta) \delta \, .
\end{equation}
From \eqref{eq:alpha_aux1} and \eqref{eq:alpha_aux2} we obtain
$\norm{A(z_\alpha - \bar z) - b} - \norm{A(z - \bar z) - b} \geq \varepsilon \eta \norm{b}$.
This last inequality together with \eqref{eq:alpha_est} allow us to estimate
\begin{align*}
\alpha_k \geq & \ \rho^{-2}
                  \big[ \norm{A(z_\alpha - \bar z) - b}^2 - \norm{A(z - \bar z) - b}^2 \big] \\
         \geq & \ \rho^{-2} \big[ \norm{A(z_\alpha - \bar z) - b} + \norm{A(z - \bar z) - b} \big]
                \, \varepsilon \eta \, \norm{b} \\
         \geq & \ \rho^{-2} \varepsilon \eta \, \norm{b} \, \norm{A(z_\alpha - \bar z) - b} \, .
\end{align*}
Estimate \eqref{eq:alphaK_est} {\color{black} follows} from this inequality together with \eqref{eq:alpha_aux2}.
\end{proof}

% --------------------------------------------------------------------
\section{Convergence analysis} \label{sec:conv-analysys}

We open this section obtaining an estimate, which is similar
in spirit to Lemma~\ref{lm:a0}~(item~7.).

\begin{lemma}
Let Assumptions (A2) and (A3) hold. Then, for $x^\star$ as in (A3) it holds
\begin{multline} \label{eq:gain1}
\norm{x^\star - x_k^\delta}^{2} - \norm{x^\star - x_{k+1}^\delta}^{2} \, \geq \\
  \norm{x_k^\delta - x_{k+1}^\delta}^{2} +
  2 \varepsilon\eta \alpha_k^{-1}
  \norm{F'(x_k) (x_{k+1}^\delta - x_k^\delta) + F(x_k^\delta) - y^\delta} \,
  \norm{F(x_k^\delta) - y^\delta} \, ,
\end{multline}
for $k = 0, \dots , k^* - 1$.
\end{lemma}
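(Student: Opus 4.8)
The plan is to extract \eqref{eq:gain1} directly from the \emph{exact} identity \eqref{eq:pgain} in Lemma~\ref{lm:a0}, item~7, keeping the term $\alpha^{-1}\norm{A(z-z_\alpha)}^2$ that was simply discarded in Add~3 of Proposition~\ref{pr:00}. Fix $0 \le k < k^*$; as established inside the proof of Theorem~\ref{th:wd} we have $x_k^\delta \in B_\rho(x_0)$, so Lemma~\ref{lm:00} applies at $x = x_k^\delta$ and $F'(x_k^\delta) \neq 0$. I would apply Lemma~\ref{lm:a0}, item~7, with $z = x^\star$, $z_\alpha = x_{k+1}^\delta$, $\bar z = x_k^\delta$, $b = y^\delta - F(x_k^\delta)$, $A = F'(x_k^\delta)$, $\alpha = \alpha_k$, and abbreviate $u := F(x_k^\delta) - y^\delta + F'(x_k^\delta)(x^\star - x_k^\delta)$ and $v := F(x_k^\delta) - y^\delta + F'(x_k^\delta)(x_{k+1}^\delta - x_k^\delta)$. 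Note that $\norm{v}$ is exactly the norm appearing in \eqref{eq:rb} (via \eqref{eq:xkp1}, $v = -(y^\delta - F(x_k^\delta) - F'(x_k^\delta)h_k)$) and on the right-hand side of \eqref{eq:gain1}.

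The decisive step is algebraic. Since $x^\star - x_{k+1}^\delta = (x^\star - x_k^\delta) - (x_{k+1}^\delta - x_k^\delta)$, one has $F'(x_k^\delta)(x^\star - x_{k+1}^\delta) = u - v$, so the last term of \eqref{eq:pgain} equals $\alpha_k^{-1}\norm{u-v}^2 = \alpha_k^{-1}\big(\norm{u}^2 - 2\inner{u}{v} + \norm{v}^2\big)$. Adding this to the bracketed term $\alpha_k^{-1}\big(\norm{v}^2 - \norm{u}^2\big)$ of \eqref{eq:pgain} cancels the two copies of $\norm{u}^2$ and leaves
\[
\norm{x^\star - x_k^\delta}^{2} - \norm{x^\star - x_{k+1}^\delta}^{2} \, = \, \norm{x_k^\delta - x_{k+1}^\delta}^{2} + \frac{2}{\alpha_k}\big(\norm{v}^2 - \inner{u}{v}\big).
\]
By Cauchy--Schwarz, $\inner{u}{v} \le \norm{u}\,\norm{v}$, whence $\norm{v}^2 - \inner{u}{v} \ge \norm{v}\,(\norm{v} - \norm{u})$.

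It then suffices to check $\norm{v} - \norm{u} \ge \varepsilon\eta\,\norm{F(x_k^\delta) - y^\delta}$. For $\norm{v}$, the range criterion \eqref{eq:rb} together with \eqref{eq:ck} gives $\norm{v} \ge c_k = (1+\varepsilon)\eta\norm{F(x_k^\delta) - y^\delta} + (1+\eta)\delta$; for $\norm{u}$, Lemma~\ref{lm:00} at $x = x_k^\delta$ gives $\norm{u} \le \eta\norm{F(x_k^\delta) - y^\delta} + (1+\eta)\delta$. Subtracting these two estimates removes the $(1+\eta)\delta$ terms and yields precisely $\varepsilon\eta\,\norm{F(x_k^\delta) - y^\delta} \ge 0$; plugging this back into the displayed identity and using $\norm{v} = \norm{F'(x_k)(x_{k+1}^\delta - x_k^\delta) + F(x_k^\delta) - y^\delta}$ gives \eqref{eq:gain1}. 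I do not anticipate any genuine difficulty; the one place demanding care is the bookkeeping that turns the sign-indefinite bracket in \eqref{eq:pgain}, once combined with the usually-discarded term $\alpha_k^{-1}\norm{A(z-z_\alpha)}^2$, into the clean expression $2\alpha_k^{-1}(\norm{v}^2 - \inner{u}{v})$ — this is what makes Cauchy--Schwarz deliver the stated product $\norm{v}\,\norm{F(x_k^\delta) - y^\delta}$ rather than a weaker quadratic bound.
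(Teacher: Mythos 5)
Your proof is correct and follows essentially the same route as the paper: both reduce the left-hand side to the exact identity $\norm{x_k^\delta-x_{k+1}^\delta}^2+\tfrac{2}{\alpha_k}\bigl(\norm{v}^2-\inner{u}{v}\bigr)$ (the paper via the polarization identity plus the normal equation $\alpha_k h_k=A^*(b-Ah_k)$, you by combining the bracketed term of \eqref{eq:pgain} with the usually discarded $\alpha_k^{-1}\norm{A(z-z_\alpha)}^2$), and then finish identically with Cauchy--Schwarz and the gap $\norm{v}-\norm{u}\ge\varepsilon\eta\norm{F(x_k^\delta)-y^\delta}$ obtained from \eqref{eq:rb}, \eqref{eq:ck} and Lemma~\ref{lm:00}. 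The only difference is cosmetic: you cite the prepackaged identity of Lemma~\ref{lm:a0}(item~7) where the paper rederives it inline.
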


\begin{proof}
The polarization identity yields
\begin{equation} \label{eq:gain_00}
\norm{x^\star - x_k^\delta}^{2} - \norm{x^\star - x_{k+1}^\delta}^{2}
 = \norm{x_k^\delta - x_{k+1}^\delta}^{2}
 - 2 \big\langle x_{k+1}^\delta - x_k^\delta , x_{k+1}^\delta - x^\star \big\rangle \, .
\end{equation}
Adopting the notation \,$A := F'({\color{black} x_k^\delta})$, \,$b := y^\delta - F(x_k^\delta)$,
it follows from \eqref{eq:hk} and \eqref{eq:xkp1}
\begin{align}
- \big\langle x_{k+1}^\delta - x_k^\delta , \ x_{k+1}^\delta - x^\star \big\rangle
  &  = \alpha_k^{-1}
       \big\langle A^* (A h_k - b )  , \ x_{k+1}^\delta - x^\star \big\rangle
       \nonumber \\
  &  = \alpha_k^{-1}
       \big\langle A h_k - b , \ A [h_k - (x^\star - x_k^\delta) ] \big\rangle
       \nonumber \\
  &  = \alpha_k^{-1}
       \Big[  \big\langle A h_k - b , A h_k - b \big\rangle
       - \big\langle A h_k - b , A (x^\star - x_k^\delta) - b \big\rangle \Big]
       \nonumber \\
  & \geq \alpha_k^{-1}
       \Big[  \norm{A h_k - b}^{2} - \norm{A h_k - b} \,
       \norm{A (x^\star - x_k^\delta) - b} \Big] \nonumber \\
  &  = \alpha_k^{-1} \norm{A h_k - b}
       \Big[  \norm{A h_k - b} - \norm{A (x^\star - x_k^\delta) - b} \Big] \, .
       \label{eq:gain_01}
\end{align}
However, from Lemma~\ref{lm:00} (with $x = x_k^\delta$) and Algorithm~I
(see \eqref{eq:ck} and \eqref{eq:rb}), it follows
\begin{equation} \label{eq:gain_02}
\norm{A (x^\star - x_k^\delta) - b}
  \ \leq  \  \eta \, \norm{b} + (1 + \eta) \, \delta
  \   =   \  c_k - \varepsilon\eta \, \norm{b}
  \, \leq \, \norm{A h_k - b} - \varepsilon\eta \, \norm{b} \, . 
\end{equation}
Thus, inequality \eqref{eq:gain1} follows substituting \eqref{eq:gain_01} and
\eqref{eq:gain_02} in \eqref{eq:gain_00}.
\end{proof}

The following results are devoted to the analysis of the residuals
$y^\delta - F(x_k^\delta)$ for a sequence $\set{x_k^\delta}$ generated
by Algorithm~I.
In Proposition~\ref{pr:residual_decay} we estimate the decay rate of the
residuals.
Moreover, in Proposition~\ref{pr:residual_serie} we prove the summability
of the {\color{black} series} of squared residuals.

\begin{proposition} \label{pr:residual_decay}
Let Assumptions (A2) and (A3) hold. Then, for any sequence $\set{x_k^\delta}$
generated by Algorithm~I we have
\begin{equation} \label{eq:residual_est}
\norm{ y^\delta - F(x_{k+1}^\delta)} \, \leq \,
\Lambda \, \norm{ y^\delta - F(x_k^\delta)} \, ,
\end{equation}
for $k = 0, \dots , k^* - 1$.
Here $\Lambda := (C_1 + \eta)(1 - \eta)^{-1}$,
\,$C_1 := p (C_0 - 1)  + 1$ \,and
\,$C_0 := (1+\varepsilon) \eta + (1+\eta) \tau^{-1} < 1$.
Additionally, if
\begin{equation} \label{eq:eta_extra_assump}
\eta \ < \ \frac{p+\frac{p}{\tau}}{2 + p(1+\varepsilon) - \frac p \tau} \, ,
\end{equation}
then $\Lambda < 1$, from where it follows \,$k^* = O(|\ln\delta| + 1)$.%
\footnote{Here $k^*$ is the stopping index defined in Step~[3.3] of Algorithm~I.}
\end{proposition}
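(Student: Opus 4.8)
The plan is to prove the geometric decay estimate \eqref{eq:residual_est} directly from the range condition \eqref{eq:rb}, the tangential cone condition (A2), and the triangle inequality, and then to deduce finiteness of $k^*$ with the stated logarithmic bound by iterating that estimate until the discrepancy level $\tau\delta$ is reached.

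First I would control the residual $\norm{y^\delta - F(x_{k+1}^\delta)}$ by the linearized residual plus the linearization error. Write
\[
\norm{y^\delta - F(x_{k+1}^\delta)} \leq
\norm{y^\delta - F(x_k^\delta) - F'(x_k^\delta) h_k} +
\norm{F(x_{k+1}^\delta) - F(x_k^\delta) - F'(x_k^\delta) h_k},
\]
using $x_{k+1}^\delta = x_k^\delta + h_k$ from \eqref{eq:xkp1}. The first term is bounded above by $d_k$ by \eqref{eq:rb}. For the second term, (A2) applied with $\bar x = x_{k+1}^\delta$, $x = x_k^\delta$ gives a bound of $\eta \norm{F(x_{k+1}^\delta) - F(x_k^\delta)}$, and then another triangle inequality $\norm{F(x_{k+1}^\delta) - F(x_k^\delta)} \leq \norm{F(x_{k+1}^\delta) - y^\delta} + \norm{y^\delta - F(x_k^\delta)}$ lets me move the $\norm{y^\delta - F(x_{k+1}^\delta)}$ term to the left-hand side, producing $(1-\eta)\norm{y^\delta - F(x_{k+1}^\delta)} \leq d_k + \eta\norm{y^\delta - F(x_k^\delta)}$. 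It then remains to bound $d_k$ by a multiple of $\norm{y^\delta - F(x_k^\delta)}$: from \eqref{eq:dk}, \eqref{eq:ck} and the fact that $\delta < \tau^{-1}\norm{F(x_k^\delta) - y^\delta}$ for $k < k^*$, one gets $c_k \leq C_0 \norm{y^\delta - F(x_k^\delta)}$ with $C_0 = (1+\varepsilon)\eta + (1+\eta)\tau^{-1}$, hence $d_k = p c_k + (1-p)\norm{y^\delta - F(x_k^\delta)} \leq C_1 \norm{y^\delta - F(x_k^\delta)}$ with $C_1 = p(C_0-1)+1$. Dividing by $(1-\eta)$ gives exactly $\Lambda = (C_1+\eta)(1-\eta)^{-1}$, and the inequality $C_0 < 1$ needed here is precisely the first estimate established in \textbf{Add~2.} of Proposition~\ref{pr:00}.

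Next, under the extra assumption \eqref{eq:eta_extra_assump} I would check $\Lambda < 1$. This is a routine algebraic rearrangement: $\Lambda < 1 \iff C_1 + \eta < 1 - \eta \iff p(C_0 - 1) + 2\eta < 0$, and substituting $C_0$ and clearing denominators turns this into the displayed bound on $\eta$ in \eqref{eq:eta_extra_assump}. Finally, for the stopping index: for every $k < k^*$ the discrepancy has not yet been triggered, so $\norm{y^\delta - F(x_k^\delta)} > \tau\delta$; combining this with the iterated bound $\norm{y^\delta - F(x_k^\delta)} \leq \Lambda^k \norm{y^\delta - F(x_0^\delta)}$ from \eqref{eq:residual_est} gives $\tau\delta < \Lambda^{k^*-1}\norm{y^\delta - F(x_0^\delta)}$, and taking logarithms (using $\Lambda < 1$, so $\ln\Lambda < 0$) yields $k^* - 1 < (\ln(\tau\delta) - \ln\norm{y^\delta - F(x_0^\delta)})/\ln\Lambda$, i.e. $k^* = O(|\ln\delta| + 1)$.

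The only genuinely delicate point is the bookkeeping in the first step: one must be careful that the tangential cone estimate is legitimately applicable, i.e. that $x_{k+1}^\delta \in B_\rho(x_0)$, which is guaranteed by the monotonicity \eqref{eq:kgain} from Theorem~\ref{th:wd} (the same argument used there to show $x_{k_0}^\delta \in B_\rho(x_0)$), and that the absorption of the $\norm{y^\delta - F(x_{k+1}^\delta)}$ term is valid, which requires only $\eta < 1$. Everything else is the elementary algebra of combining \eqref{eq:ck}, \eqref{eq:dk}, the bound $\delta < \tau^{-1}\norm{F(x_k^\delta)-y^\delta}$, and the logarithm estimate; no new analytic ingredient beyond the already-established Proposition~\ref{pr:00} and Theorem~\ref{th:wd} is needed.
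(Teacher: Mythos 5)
Your argument is essentially identical to the paper's: the same triangle-inequality split of $\norm{y^\delta - F(x_{k+1}^\delta)}$ into the linearized residual (bounded by $d_k$ via \eqref{eq:rb}) plus the linearization error (bounded by $\eta\norm{F(x_k^\delta)-F(x_{k+1}^\delta)}$ via (A2)), the same absorption of the $\norm{y^\delta - F(x_{k+1}^\delta)}$ term, the same bounds $c_k \leq C_0\norm{y^\delta - F(x_k^\delta)}$ and $d_k \leq C_1\norm{y^\delta - F(x_k^\delta)}$, and the same logarithmic iteration of \eqref{eq:residual_est} against the discrepancy threshold $\tau\delta$ to bound $k^*$. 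The proposal is correct and takes the same route as the paper.
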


\begin{proof}
From Algorithm~I (see \eqref{eq:dk}) and (A2), it follows
\begin{align}
\norm{y^\delta - F(x_{k+1}^\delta)}
 & \leq \norm{y^\delta - F(x_k^\delta) - F'(x_k^\delta) h_k}
        + \, \norm{F(x_k^\delta) + F'(x_k^\delta) h_k - F(x_{k+1}^\delta)}
        \nonumber \\
 & \leq d_k + \eta \norm{F(x_k^\delta) - F(x_{k+1}^\delta)} \nonumber \\
 & \leq d_k + \eta \big( \norm{y^\delta - F(x_k^\delta)}
        + \norm{y^\delta - F(x_{k+1}^\delta)} \big) \, ,
   \ 0 \leq k < k^* \, .       \label{eq:b-kp1}
\end{align}
On the other hand, Algorithm~I (see \eqref{eq:ck}) implies
$c_k \leq C_0 \norm{y^\delta - F(x_k^\delta)}$, $0 \leq k < k^*$.
Consequently, $d_k \leq C_1 \norm{y^\delta - F(x_k^\delta)}$, $0 \leq k < k^*$.
Substituting this inequality in \eqref{eq:b-kp1}, we obtain
the estimate \eqref{eq:residual_est}.

To prove the last assertion, observe that $\Lambda < 1$ iff
\eqref{eq:eta_extra_assump} holds true. Moreover, from Algorithm~I
(see Step~[3.3]) and \eqref{eq:residual_est} follows
\,$\tau \delta \leq \norm{y^\delta - F(x_{k^* - 1}^\delta)}
\leq \Lambda^{k^*-1} \norm{y^\delta - F(x_0)}$.
% $$
% \tau \delta \, \leq \, \norm{y^\delta - F(x_{k^* - 1}^\delta)}
%             \, \leq \, \Lambda^{k^*-1} \norm{y^\delta - F(x_0)} \, .
% $$
\,Consequently, \eqref{eq:eta_extra_assump} implies $k^* \leq
(\ln\Lambda)^{-1} \ln\big( \tau\delta / \norm{y^\delta - F(x_0)} \big) +1$,
completing the proof.
\end{proof}

\begin{remark}
Inequality \eqref{eq:eta_extra_assump} holds true if $\eta < 1/3$,  $p$ is
sufficiently close to $1$,  $\varepsilon$ is sufficiently close to zero and
$\tau$ is large enough.
Notice that the condition $\eta < 1/3$ is not necessary for the convergence
analysis devised in this manuscript.
\end{remark}

\begin{proposition} \label{pr:residual_serie}
Let Assumptions (A1), (A2) and (A3) hold.
Suppose that no noise is present in the data (i.e., $\delta = 0$).
Then, for any sequence $\set{x_k}$ generated by Algorithm~I
we have
\begin{equation} \label{eq:residual_serie}
\textstyle\sum\limits_{k=0}^\infty \, \norm{y - F(x_k)}^2
\, < \, \infty \, .
\end{equation}
\end{proposition}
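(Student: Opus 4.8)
The plan is to combine the geometric-decay estimate \eqref{eq:residual_est} from Proposition~\ref{pr:residual_decay} with the summability of the step sizes already established in Corollary~\ref{cor:reas_wanderer}. When $\delta=0$, Algorithm~I either stops after finitely many steps — in which case the series in \eqref{eq:residual_serie} has only finitely many nonzero terms and there is nothing to prove — or it runs forever, producing an infinite sequence $\set{x_k}$. So assume $k^*=\infty$ and that $\norm{F(x_k)-y}>0$ for all $k$ (otherwise, again, the iteration would have terminated, or the residual would be identically zero from some point on).

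First I would invoke Proposition~\ref{pr:00}~(item~1.) (equivalently \eqref{eq:xi_alpha}) with $x=x_k$, $\alpha=\alpha_k$, $\xi_\alpha=x_{k+1}$, together with Assumption~(A1) which gives $\norm{F'(x_k)}\le C$ on $B_\rho(x_0)$. This yields
\begin{equation*}
  C\,\norm{x_{k+1}-x_k} \ \geq \ \norm{F(x_k)-y} - \norm{F(x_k)-y-F'(x_k)h_k} \ \geq \ \norm{F(x_k)-y} - d_k,
\end{equation*}
where the last step uses \eqref{eq:rb}. Now with $\delta=0$ the constants collapse: $c_k=(1+\varepsilon)\eta\norm{F(x_k)-y}$ and $d_k = p\,c_k+(1-p)\norm{F(x_k)-y} = \big(p(1+\varepsilon)\eta+(1-p)\big)\norm{F(x_k)-y}$, so that
\begin{equation*}
  \norm{F(x_k)-y} - d_k \ = \ p\big(1-(1+\varepsilon)\eta\big)\,\norm{F(x_k)-y}.
\end{equation*}
Since $\varepsilon<\frac1\eta-1$ by \eqref{def:tau-sigma} (so $(1+\varepsilon)\eta<1$), the factor $\kappa:=p\big(1-(1+\varepsilon)\eta\big)$ is a strictly positive constant. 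Combining, $\norm{F(x_k)-y}\le (C/\kappa)\,\norm{x_{k+1}-x_k}$, hence $\norm{F(x_k)-y}^2\le (C/\kappa)^2\,\norm{x_{k+1}-x_k}^2$ for every $k$.

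Summing over $k$ and applying Corollary~\ref{cor:reas_wanderer} gives
\begin{equation*}
  \textstyle\sum_{k=0}^\infty \norm{y-F(x_k)}^2 \ \leq \ (C/\kappa)^2 \sum_{k=0}^\infty \norm{x_k-x_{k+1}}^2 \ < \ \infty,
\end{equation*}
which is exactly \eqref{eq:residual_serie}. There is no real obstacle here: the only points requiring a moment's care are (a) handling the trivial case $k^*<\infty$ separately, (b) checking that the $\delta=0$ specialization of $c_k,d_k$ makes $\kappa$ strictly positive — which is precisely what the constraint $\varepsilon<(\tfrac1\eta-1)$ in \eqref{def:tau-sigma} buys us — and (c) noting that when $\eta=0$ one has $\kappa=p>0$ directly, so the linear case is covered too. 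Note that \eqref{eq:residual_est} with $\Lambda<1$ is not even needed; the step-size summability already does the job, which is why Assumption~(A1) rather than \eqref{eq:eta_extra_assump} is the hypothesis of the Proposition.
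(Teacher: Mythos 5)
Your proof is correct, but it takes a genuinely different route from the paper's. The paper's own argument combines the refined monotonicity estimate \eqref{eq:gain1} with the upper bound on $\alpha_k$ from Lemma~\ref{lm:a0}~(item~6.) and the lower bound $\norm{F'(x_k)h_k+F(x_k)-y}\ge c_k=(1+\varepsilon)\eta\norm{F(x_k)-y}$, extracting summability of $\sum\norm{y-F(x_k)}^2$ from the telescoping series of iteration-error gains. You instead lower-bound the step length by the residual via \eqref{eq:xi_alpha}, (A1) and \eqref{eq:rb} --- exactly the chain of inequalities the paper uses in the proof of Theorem~\ref{th:wd} to establish \eqref{eq:step-estimate} and finiteness of $k^*$ for $\delta>0$ --- and then feed the resulting bound $\norm{y-F(x_k)}\le (C/\kappa)\norm{x_{k+1}-x_k}$ into Corollary~\ref{cor:reas_wanderer}. (Your opening sentence announces \eqref{eq:residual_est} as an ingredient, but you rightly never use it.) Your route is the more elementary one: it needs only the basic gain \eqref{eq:kgain}, not \eqref{eq:gain1} nor item~6.\ of Lemma~\ref{lm:a0}. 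It also degrades more gracefully: the constant in the paper's final estimate carries the factor $\varepsilon\eta\cdot(1+\varepsilon)\eta$, which vanishes as $\eta\to 0$ (so the argument as written yields nothing in the limiting linear case), whereas your constant $\kappa^2/C^2=p^2\bigl(1-(1+\varepsilon)\eta\bigr)^2/C^2$ stays bounded away from zero --- a point you correctly flag in (c). What the paper's route buys is the intermediate inequality \eqref{eq:res_ser2}, which Remark~\ref{rm:residual_serie} reuses to deduce summability of the linearized-residual series; those conclusions also follow from your argument, but only after the extra observation that $\norm{F'(x_k)h_k+F(x_k)-y}\le d_k\le\norm{F(x_k)-y}$.
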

\begin{proof}
From Lemma~\ref{lm:a0}~(item~6.) with $\alpha = \alpha_k$, $\bar z = x_k$,
$z_\alpha = x_{x+1}$, $b = y - F(x_k)$ and $A = F'(x_k)$, follows
\begin{eqnarray}
\frac{1}{\alpha_k}
 & \geq & \frac{\norm{y - F(x_k)} \big(\norm{y - F(x_k)} - \norm{F(x_k) - y - F'(x_k) h_k} \big) }
               {\norm{F'(x_k)^* (y - F(x_k))}^2} \nonumber \\
 & \geq & \frac{ {\color{black} \norm{y - F(x_k)} - \norm{F(x_k) - y - F'(x_k) h_k} } }
               {C^2 \norm{y - F(x_k)}} \label{eq:res_ser1}
\end{eqnarray}
(the last inequality follows from (A1)).
Moreover, it follows from Agorithm~I (see \eqref{eq:rb})
$$
\norm{y - F(x_k)} - \norm{F(x_k) - y - F'(x_k) h_k}
\, \geq \, \norm{y - F(x_k)} - d_k 
\, \geq \, p \big( 1 - (1+\varepsilon) \eta \big) \, \norm{y - F(x_k)}
$$
(notice that $(1 - (1+\varepsilon) \eta) >0$ due to \eqref{def:tau-sigma}).
From {\color{black} this} inequality, \eqref{eq:res_ser1} and \eqref{eq:gain1} follows
\begin{eqnarray}
\norm{x^\star - x_0}^2
& \geq & \textstyle\sum\limits_{k=0}^m \, \frac{2\eta}{\alpha_k}
         \norm{F'(x_k) (x_{k+1} - x_k) + F(x_k) - y} \, \norm{F(x_k) - y}
         \nonumber \\
& \geq & \textstyle\frac{2\eta p(1 - (1+\varepsilon) \eta)}{C^2} \sum\limits_{k=0}^m \, 
         \norm{F'(x_k) (x_{k+1} - x_k) + F(x_k) - y} \, \norm{F(x_k) - y} ,
         \label{eq:res_ser2}
\end{eqnarray}
for all $m \in \mathbb{N}$. Finally, \eqref{eq:residual_serie} follows
from \eqref{eq:res_ser2} and the inequality
$\norm{F'(x_k) (x_{k+1} - x_k) + F(x_k) - y} \geq c_k =
(1+\varepsilon)\eta \norm{F(x_k) - y}$ (see Algorithm~I,
\eqref{eq:rb} and \eqref{eq:ck}).
\end{proof}

\begin{remark} \label{rm:residual_serie}
An {\color{black} immediate} consequence of Proposition~\ref{pr:residual_serie} is the fact that
$\norm{F(x_k) - y} \to 0$ as $k \to \infty$.
It is worth noticing that \eqref{eq:res_ser2} and Algorithm~I also
imply the summability of the series
$$
\textstyle\sum\limits_{k=0}^\infty \norm{F'(x_k) (x_{k+1} - x_k) + F(x_k) - y}^2
\mbox{ \ and \ }
\textstyle\sum\limits_{k=0}^\infty \norm{F'(x_k) (x_{k+1} - x_k) + F(x_k) - y}
                                   \norm{F(x_k) - y}
$$
(compare with \cite[inequalities (18a), (18b), (18c)]{BKL10}).
\end{remark}

In the sequel we address the first main result of this section (see
Theorem~\ref{th:noiseless}), namely convergence of Algorithm~I in
the exact data case (i.e., $\delta = 0$). To state this theorem we
need the concept of $x_{0}-${\em minimal-norm solution} of
\eqref{eq:inv-probl}, i.e., the unique $x^{\dagger} \in X$ satisfying
$\norm{x^\dagger-x_0} := \inf\,\set{\norm{x^\ast - x_0} : F(x^\ast) = y$
and $x^\ast \in B_\rho(x_0) }$.

\begin{remark}
Due to (A2), given $x^\ast \in B_{\rho/2}(x_0)$ a solution of
\eqref{eq:inv-probl} and $z \in N( F'(x^\ast) )$, the element
$x^{\ast} + tz \in B_\rho(x_0)$ is also a solution of \eqref{eq:inv-probl}
for all $t \in (-\frac \rho 2 , \frac \rho 2)$.%
\footnote{Indeed, due to (A2) we have
$$
\norm{F(x^\ast + tz) - y} \, = \, \norm{F(x^\ast + tz) - F(x^\ast)}
\, \leq \, \frac{1}{1-\eta} \norm{F'(x^\ast) (x^\ast + tz - x^\ast)}
\, = \, \frac{|t|}{1-\eta} \norm{F'(x^\ast) \, z} = 0 \, .
$$}

Due to (A3), $x^\dagger \in B_{\rho/2}(x_0)$. Thus, the inequality
$\norm{x^\dagger - x_0}^2 \leq \norm{(x^\dagger + tz) - x_0}^2$ holds for all
$t \in (-\frac \rho 2 , \frac \rho 2)$ and all $z \in N(F'(x^{\dagger}))$,
from where we conclude%
\footnote{The conclusion follows from the fact that
$\norm{x^\dagger-x_0}^2 \leq \norm{(x^\dagger+tz) - x_0}^2, \forall
t \in (-\epsilon , \epsilon)$, implies $\langle x^\dagger - x_0 , z \rangle = 0$.}
\begin{equation} \label{eq:x+}
x^\dagger - x_0 \, \in \, N( F'(x^\dagger) )^\perp .
\end{equation}
\end{remark}

\begin{theorem} \label{th:noiseless}
Let Assumptions (A1), (A2) and (A3) hold.
Suppose that no noise is present in the data (i.e., $\delta = 0$).
Then, any sequence $\{ x_k \}$ generated by Algorithm~I either
terminates after finitely many iterations with a solution of
\eqref{eq:inv-probl}, or it converges to a solution of this
equation as $k \to \infty$. Moreover, if
\begin{equation} \label{eq:nucleo}
N(F'(x^\dagger)) \, \subset \, N(F'(x)) \, , \ \forall x \in B_\rho(x_0)
\end{equation}
holds, then $x_k\to x^{\dagger}$ as $k \to \infty$.
\end{theorem}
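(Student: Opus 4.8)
The argument follows the classical Hanke/Kaltenrieder-Neubauer-Schuster template for iterative regularization, now fed by the estimates already proved in this manuscript. First I would dispose of the terminating case: if Algorithm~I stops at some finite $k^*$, then by Step~[3.3] with $\delta=0$ we have $\norm{F(x_{k^*})-y}\le\tau\cdot 0 = 0$, so $x_{k^*}$ solves \eqref{eq:inv-probl} and there is nothing more to prove. So assume the iteration runs forever. The goal is to show $\{x_k\}$ is Cauchy; since $X$ is complete this gives a limit $x_\infty$, and continuity of $F$ together with $\norm{F(x_k)-y}\to 0$ (Remark~\ref{rm:residual_serie}, consequence of Proposition~\ref{pr:residual_serie}) forces $F(x_\infty)=y$, i.e.\ the limit is a solution.

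\textbf{The Cauchy argument.} This is the main obstacle and the technical heart of the proof. The standard device: for $j\ge k$ pick an intermediate index $l$ with $k\le l\le j$ minimizing $\norm{F(x_l)-y}$ among $k\le l\le j$, and estimate
\[
\norm{x_j-x_k}\le\norm{x_j-x_l}+\norm{x_l-x_k}.
\]
One controls each of the two terms by writing $\norm{x_l-x_k}^2 = \norm{x^\star-x_k}^2-\norm{x^\star-x_l}^2 + 2\inner{x^\star-x_l}{x_l-x_k}$ (polarization), using that $\{\norm{x^\star-x_k}^2\}$ is monotonically decreasing (Theorem~\ref{th:wd}, \eqref{eq:kgain}) hence convergent, so the difference of squares $\to 0$; then the cross term $\inner{x^\star-x_l}{x_l-x_k}$ must be bounded by a telescoping-type sum involving $\alpha_m^{-1}\norm{F'(x_m)h_m}\norm{\cdot}$ expressions. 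Concretely, expanding $x_l-x_k=\sum_{m=k}^{l-1}h_m$ and inserting $h_m=(F'(x_m)^*F'(x_m)+\alpha_mI)^{-1}F'(x_m)^*(y-F(x_m))$ from \eqref{eq:hk}, each inner product $\inner{x^\star-x_l}{h_m}$ becomes $\alpha_m^{-1}\inner{F'(x_m)(x^\star-x_l)}{F'(x_m)h_m-b_m}$ plus a term that is nonnegative or small; one then bounds $\norm{F'(x_m)(x^\star-x_l)}\le\norm{F'(x_m)(x^\star-x_m)}+\norm{F'(x_m)(x_m-x_l)}$ and uses (A2) (wTCC) to replace these by multiples of $\norm{F(x_m)-y}$ and $\norm{F(x_l)-y}$ — the latter being dominated by $\norm{F(x_m)-y}$ precisely because $l$ was chosen as the minimizer. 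This is exactly where the summability $\sum\alpha_k^{-1}\norm{F'(x_k)h_k}\norm{F(x_k)-y}<\infty$ (implicit in \eqref{eq:res_ser2}/\eqref{eq:gain1}) and Remark~\ref{rm:residual_serie} do the work: the tail of these sums goes to zero, so $\norm{x_j-x_k}\to 0$ as $k,j\to\infty$. I would organize this as: (a) the telescoped identity for $\inner{x^\star-x_l}{x_l-x_k}$; (b) term-by-term bound via wTCC and the residual inequalities from Algorithm~I; (c) invoke summability to conclude Cauchy.

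\textbf{Identifying the limit as $x^\dagger$ under \eqref{eq:nucleo}.} Let $x_\infty=\lim x_k$, a solution. The argument is the usual one: by induction, $x_k-x_0\in N(F'(x_0))^\perp$ — indeed each $h_m\in\overline{\mathrm{Rg}(F'(x_m)^*)}=N(F'(x_m))^\perp\subseteq N(F'(x^\dagger))^\perp$ by \eqref{eq:hk} and the hypothesis \eqref{eq:nucleo} (which gives $N(F'(x^\dagger))\subset N(F'(x_m))$ hence $N(F'(x_m))^\perp\subset N(F'(x^\dagger))^\perp$); combined with $x_0-x^\dagger\in N(F'(x^\dagger))^\perp$ nowhere — wait, rather we need $x_0 - x^\dagger$: we have $x^\dagger-x_0\in N(F'(x^\dagger))^\perp$ from \eqref{eq:x+}. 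Therefore $x_k-x^\dagger = (x_k-x_0)+(x_0-x^\dagger)\in N(F'(x^\dagger))^\perp$ for every $k$, and passing to the limit, $x_\infty-x^\dagger\in N(F'(x^\dagger))^\perp$. On the other hand $x_\infty$ is a solution with $\norm{x_\infty - x_0}\le\norm{x^\dagger - x_0}$: this follows because $\norm{x^\star-x_k}$ is decreasing for \emph{every} solution $x^\star\in B_\rho(x_0)$, in particular for $x^\star=x^\dagger$, so $\norm{x_\infty-x^\dagger}\le\liminf\norm{x_k-x^\dagger}\le\norm{x_0-x^\dagger}$ — actually more directly, one shows $x_\infty-x^\dagger\in N(F'(x^\dagger))$ too: since $F'(x^\dagger)(x_\infty-x^\dagger)$ must vanish because wTCC gives $\norm{F'(x^\dagger)(x_\infty-x^\dagger)}\le(1+\eta)\norm{F(x_\infty)-F(x^\dagger)}=0$. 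Hence $x_\infty-x^\dagger$ lies in both $N(F'(x^\dagger))$ and its orthogonal complement, so $x_\infty-x^\dagger=0$, i.e.\ $x_k\to x^\dagger$. The only subtlety to check carefully is the base and inductive step of $x_k-x_0\in\overline{\mathrm{span}}\{\,\mathrm{Rg}(F'(x_m)^*):m<k\,\}\subseteq N(F'(x^\dagger))^\perp$, which is immediate from \eqref{eq:hk} and \eqref{eq:xkp1}.
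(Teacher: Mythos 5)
Your proposal is correct and follows essentially the same route as the paper: the finite-termination case via Step~[3.3] with $\delta=0$; the Cauchy argument with an intermediate index of minimal residual, polarization, monotonicity of $\norm{x^\star-x_k}$, the wTCC bound on $\norm{F'(x_k)(x_{\overline k}-x^\star)}$ and telescoping via \eqref{eq:gain1}; and the identification of the limit with $x^\dagger$ by placing $x_\infty-x^\dagger$ in both $N(F'(x^\dagger))^\perp$ (via \eqref{eq:nucleo}, \eqref{eq:hk} and \eqref{eq:x+}) and $N(F'(x^\dagger))$ (via (A2)). The only cosmetic differences are that the inner product $\langle x^\star-x_l,h_m\rangle$ equals $-\alpha_m^{-1}\langle F'(x_m)(x^\star-x_l),F'(x_m)h_m-b_m\rangle$ exactly (no extra term), and the self-corrected detour about $\norm{x_\infty-x_0}\le\norm{x^\dagger-x_0}$ is unnecessary.
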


\begin{proof}
In what follows we adopt the notation \,$A_k := F'(x_k)$,
\,$b_k := y - F(x_k)$.
If for some $k \in \mathbb{N}$, $\norm{y - F(x_k)} = 0$, then
$x_k$ is a solution and Algorithm~I stops with $k^* = k$.
Otherwise, $\{x_k\}_{k\in\mathbb{N}}$ is a Cauchy sequence.
Indeed, fix $m < n$ and choose $\overline{k} \in
\{m, \dots, n\}$ s.t.
\begin{equation} \label{eq:residual2}
\norm{b_{\overline k}} \, \leq \,
\norm{b_k} \ \text{ for all } \ k \in \{m, \dots,n \} \, .
\end{equation}
From the triangle inequality and the polarization identity, it follows
that for any \,$x^\star$ as in (A3)
\begin{align}
\textstyle {\color{black} \frac 1 2} \, \norm{x_n - x_m}^2
& \leq \norm{x_n - x_{\overline k}}^2 + \norm{x_m - x_{\overline k}}^2 \nonumber \\
&  =   \big( \norm{x^\star - x_{n}}^2 - \norm{x^\star - x_{\overline k}}^2
       + 2 \langle x_n - x_{\overline k} \ , x_{\overline k} - x^\star \rangle \big) \nonumber \\
& \quad + \big( \norm{x^\star - x_m}^2 - \norm{x^\star - x_{\overline k}}^2
       + 2 \langle x_m - x_{\overline k} \ , x_{\overline k} - x^\star \rangle \big) \, .
       \label{eq:iner_prods}
\end{align}
Since the sequence $\{ \norm{x^\star - x_n} \}_{n\in\mathbb{N}}$ is
non-negative and non-increasing {\color{black} (see \eqref{eq:kgain})}, it converges. Therefore, the difference
$\norm{x^\star - x_n}^2 - \norm{x^\star - x_{\overline k}}^2$ as well
as $\norm{x^\star - x_m}^2 - \norm{x^\star - x_{\overline k}}^2$ both
converge to zero as $m \to \infty$. It remains to estimate the inner
products in \eqref{eq:iner_prods}. Notice that
\begin{align}
\big| \langle x_n - x_{\overline k} \ , x_{\overline k} - x^\star \rangle
  + \langle x_m - x_{\overline k} \ , x_{\overline k} - x^\star \rangle \big|
& \leq \big| \langle x_n - x_m \ , x_{\overline k} - x^\star \rangle \big|
       \nonumber\\
& \leq \textstyle\sum\limits_{k=m}^{n-1}
       \big| \langle x_{k+1} - x_k \ , x_{\overline{k}} - x^\star \rangle \big|
       \nonumber\\
&  = \textstyle\sum\limits_{k=m}^{n-1} \frac{1}{\alpha_k}
     \big| \langle A_k^* ( A_k h_k - b_k ) \ , x_{\overline k} - x^\star \rangle \big|
     \nonumber\\
& \leq \textstyle\sum\limits_{k=m}^{n-1} \frac{1}{\alpha_k}
       \norm{A_k h_k - b_k} \, \norm{A_k (x_{\overline k} - x^\star)} \, ,
       \label{eq:a0}
\end{align}
with $h_k$ as in \eqref{eq:hk}.
However, from \eqref{eq:residual2} and (A2) follows
\begin{align*}
\norm{A_k (x_{\overline k} - x^\star)}
& \leq \norm{A_k (x_{\overline k} - x_k)} + \norm{ A_k (x^\star - x_k)} \\
& \leq \norm{F(x_{\overline k}) - F(x_k) - A_k (x_{\overline k} - x_k)}
     + \norm{F(x_{\overline k}) - F(x_k)} \\
&    + \norm{F(x^\star) - F(x_k) - A_k (x^\star - x_k)}
     + \norm{F(x^\star) - F(x_k)} \\
& \leq (\eta+1) \, \norm{F(x_{\overline k}) - F(x_k)}
     + (\eta+1) \, \norm{y - F(x_k)} \\
& \leq 2 (\eta+1) \, \norm{y - F(x_k)} + (\eta+1) \norm{y - F(x_{\overline k})} \\
& \leq 3 (\eta+1) \, \norm{y - F(x_k)} \, .
\end{align*}
Substituting this last inequality in \eqref{eq:a0}, and using
\eqref{eq:gain1} (with $x_k^\delta = x_k$, $y^\delta = y$) we obtain
\begin{align*}
\big| \langle x_n - x_{\overline k} \ , x_{\overline k} - x^\star \rangle
     + \langle x_m - x_{\overline k} \ , x_{\overline k} - x^\star \rangle \big|
& \leq 3(\eta+1) \textstyle\sum\limits_{k=m}^{n-1} \frac{1}{\alpha_k}
       \norm{A_k h_k - b_k} \, \norm{b_k} \\
& \leq \frac{3(\eta+1)}{2\varepsilon\eta}
       \textstyle\sum\limits_{k=m}^{n-1}
       \big( \norm{x^\star - x_k}^2 - \norm{x^{\ast}-x_{k+1}}^2 \big) \\
&  = \frac{3(\eta+1)}{2\varepsilon\eta}
     \big[ \norm{x^\star - x_m}^2 - \norm{x^\star - x_n}^2 \big]
     \to 0
\end{align*}
as $m \to \infty$. Thus, it follows from \eqref{eq:iner_prods}
that $\norm{x_n - x_m} \to 0$ as $m \to \infty$, proving that
$\{ x_k \}_{k\in\mathbb{N}}$ is indeed a Cauchy sequence.

Since $X$ is complete, $\{x_k\}$ converges to some $x_\infty \in X$
as $k \to \infty$.
On the other hand, $\norm{y - F(x_k)} \to 0$ as $k \to \infty$
(see Remark~\ref{rm:residual_serie}). Consequently, $x_\infty$
is a solution of \eqref{eq:inv-probl} proving the first assertion.

In order to prove the last assertion notice that,
if \eqref{eq:nucleo} hold, then
$$
x_{k+1} - x_k \, = \, \alpha_k^{-1} A_k^* (A_k h_k - b_k) \, \in \, R(F'(x_k)^*)
              \, \subset \, N(F'(x_k))^\perp
              \, \subset \, N(F'(x^\dagger))^\perp ,
              \ k = 0, 1, \dots ,
$$
from where we conclude that $x_k - x_0 \in N(F'(x^\dagger))^\perp$,
$k \in \mathbb N$.
Since $x^\dagger - x_0 \in N(F'(x^\dagger))^\perp$ (see \eqref{eq:x+}),
it follows that $x_k - x^\dagger \in N(F'(x^\dagger))^\perp,
k \in \mathbb N$. Consequently,
$x_\infty - x^\dagger = \lim_k x_k - x^\dagger \in N(F'(x^\dagger))^\perp$.
However, (A2) implies
$ \norm{F'(x^\dagger)(x_\infty - x^\dagger)} \leq
(1+\eta) \norm{F(x_\infty) - F(x^\dagger)} = 0$,
from what follows $x_{\infty} - x^\dagger \in N( F'(x^\dagger))$.
Thus, $x_{\infty}-x^{\dagger}=0$.
\end{proof}

We conclude this section adressing the last two main results, namely:
Stability (Theorem~\ref{th:stabil}) and Semi-Convergence (Theorem~\ref{th:reg}).
The following definition is quintessential for the discussion of these
results.

\begin{definition} \label{def:sucessor}
A vector $z \in X$ is a \textbf{sucessor} of $x_k^\delta$ if

$\bullet$ $k < k^*$;

$\bullet$ There exists $(\alpha_k >0 , \ h_k \in X)$ satisfying \eqref{eq:hk},
\eqref{eq:rb}, such that $z = x_k^\delta + h_k$;
\end{definition}

Notice that Theorem~\ref{th:noiseless} guarantees that the sequence
$\{ x_k \}_{k\in\mathbb{N}}$ converges to a solution of $F(x) = y$
whenever $x_{k+1}$ is a sucessor of $x_k$ for every $k\in\mathbb{N}$.
In this situation, we call $\{ x_k \}_{k\in\mathbb{N}}$ a
\textit{noiseless sequence}.
% Now, we study the behavior of the family $(x_k^\delta)_\delta$
% as the noise level $\delta$ approaches zero.

\begin{theorem}[Stability] \label{th:stabil}
Let Assumptions (A1), (A2) and (A3) hold, and $\{ \delta_j \}_{j\in\mathbb{N}}$
be a positive zero-sequence.
Assume that the (finite) sequences $\{ x_k^{\delta_j}\}_{0\leq k\leq k^*(\delta_j)}$,
$j \in \mathbb{N}$, are fixed,%
\footnote{Notice that the stopping index $k^*$ in Step~[3.3] depends on
$\delta$, i.e., $k^* = k^*(\delta)$.}
where $x_{k+1}^{\delta_j}$ is a sucessor of $x_k^{\delta_j}$.
Then, there exists a noiseless sequence $\{ x_k \}_{k\in\mathbb{N}}$
such that, for every fixed $k \in \mathbb{N}$, there exists a
subsequence $\{ \delta_{j_m} \}_{m\in\mathbb{N}}$ (depending on $k$)
satisfying
$$
x_\ell^{\delta_{j_m}} \to x_\ell \ \text{ as } \
m \to \infty \, , \qquad \text{ for } \ \ell = 0, \dots, k \, .
$$
\end{theorem}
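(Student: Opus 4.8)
The plan is to build the noiseless sequence $\{x_k\}$ term by term by a nested–subsequence (Cantor-type) construction, producing at the same time a decreasing chain of \emph{infinite} index sets $\mathbb{N}=M_0\supseteq M_1\supseteq M_2\supseteq\cdots$ with the property that $x_\ell^{\delta_j}\to x_\ell$ as $M_k\ni j\to\infty$, for every $\ell\le k$. Since all the noisy runs start from the same initial guess, the base case is trivial ($x_0:=x_0$, $M_0:=\mathbb{N}$), and once the chain is built the theorem follows at once: for a prescribed $k$ one takes $\{\delta_{j_m}\}_{m\in\mathbb{N}}$ to be the increasing enumeration of $\{\delta_j:j\in M_k\}$.

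For the inductive step, assume $x_0,\dots,x_k$ (with $x_{\ell+1}$ a successor of $x_\ell$ for $\ell<k$) and an infinite $M_k$ as above are in hand. First I would note that \eqref{eq:kgain} and (A3) confine every $x_\ell^{\delta_j}$ to the ball $B_{2\norm{x^\star-x_0}}(x_0)$, whose closure lies in $B_\rho(x_0)\subseteq D(F)$; hence $x_k\in B_\rho(x_0)$ and $F$, $F'$ are continuous there. If $F(x_k)=y$, then $x_k$ solves \eqref{eq:inv-probl} and the construction terminates: $(x_0,\dots,x_k)$ is a finite noiseless sequence and we are in the first alternative of Theorem~\ref{th:noiseless}. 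Otherwise put $r_k:=\norm{F(x_k)-y}>0$; continuity of $F$ together with $\norm{y^{\delta_j}-y}\le\delta_j\to0$ gives $\norm{F(x_k^{\delta_j})-y^{\delta_j}}\to r_k$, so for all large $j\in M_k$ we have $\norm{F(x_k^{\delta_j})-y^{\delta_j}}>\tau\delta_j$, i.e.\ $k<k^*(\delta_j)$ and $x_{k+1}^{\delta_j}=x_k^{\delta_j}+h_k^{\delta_j}$ with $(\alpha_k^{\delta_j},h_k^{\delta_j})$ fulfilling \eqref{eq:hk}, \eqref{eq:rb}; I discard the finitely many remaining indices from $M_k$.

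The crux is to trap $\{\alpha_k^{\delta_j}\}$ in a compact subinterval of $(0,\infty)$. For the upper bound I would apply Lemma~\ref{lm:a0}~(item~6.) with $A=F'(x_k^{\delta_j})$, $b=y^{\delta_j}-F(x_k^{\delta_j})$: combining $\norm{A^*b}\le C\norm{b}$ (A1) with the elementary estimate $\norm{b}-\norm{Ah_k^{\delta_j}-b}\ge\norm{b}-d_k^{\delta_j}=p(\norm{b}-c_k^{\delta_j})\ge p(1-C_0)\norm{b}$, which uses \eqref{eq:rb}--\eqref{eq:dk} and $\delta_j<\norm{b}/\tau$, and where $C_0:=(1+\varepsilon)\eta+(1+\eta)\tau^{-1}<1$ as in Proposition~\ref{pr:residual_decay}, one obtains $\alpha_k^{\delta_j}\le C^2\big(p(1-C_0)\big)^{-1}=:\bar\alpha$. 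For the lower bound (in the case $\eta>0$) Proposition~\ref{pr:alphaK_est} gives $\alpha_k^{\delta_j}\ge\rho^{-2}\varepsilon(1+\varepsilon)\eta^2\norm{b}^2$, which stays away from $0$ since $\norm{b}\to r_k>0$; call a positive lower bound $\underline\alpha_k$. Passing to a further infinite subsequence $M_{k+1}\subseteq M_k$ I may assume $\alpha_k^{\delta_j}\to\alpha_k\in[\underline\alpha_k,\bar\alpha]$. Now $F'(x_k^{\delta_j})\to F'(x_k)$ in operator norm (A1) and $y^{\delta_j}-F(x_k^{\delta_j})\to y-F(x_k)$; the self-adjoint operators $F'(x_k^{\delta_j})^*F'(x_k^{\delta_j})+\alpha_k^{\delta_j}I$ have inverses uniformly bounded (by $\underline\alpha_k^{-1}$), hence these inverses converge in operator norm, and therefore $h_k^{\delta_j}\to h_k:=(F'(x_k)^*F'(x_k)+\alpha_kI)^{-1}F'(x_k)^*(y-F(x_k))$ and $x_{k+1}^{\delta_j}\to x_{k+1}:=x_k+h_k$. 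Letting $j\to\infty$ in $\norm{y^{\delta_j}-F(x_k^{\delta_j})-F'(x_k^{\delta_j})h_k^{\delta_j}}\in[c_k^{\delta_j},d_k^{\delta_j}]$ (with $c_k^{\delta_j}\to(1+\varepsilon)\eta r_k$ and $d_k^{\delta_j}\to p(1+\varepsilon)\eta r_k+(1-p)r_k$) shows that $(\alpha_k,h_k)$ satisfies \eqref{eq:hk}, \eqref{eq:rb} for $\delta=0$ at $x=x_k$; since $r_k>0$, $x_{k+1}$ is a successor of $x_k$, which closes the induction (on $M_{k+1}$ one has $x_\ell^{\delta_j}\to x_\ell$ for every $\ell\le k+1$).

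The main obstacle is precisely this two-sided control of the multipliers. Without the bounds $\underline\alpha_k\le\alpha_k^{\delta_j}\le\bar\alpha$ the family $\{x_{k+1}^{\delta_j}\}$ is only known to be bounded in $X$, hence only weakly precompact, which is not enough to pass \eqref{eq:hk} to the limit; the uniform bounds are what upgrade the convergence to the strong topology and make the resolvent identity continuous. The side issues are comparatively minor: termination of $\{x_k\}$ at a solution is absorbed into the $F(x_k)=y$ branch above, and the degeneracy of the lower bound on $\alpha_k^{\delta_j}$ when $\eta=0$ corresponds to the linear regime, for which the argument of \cite{BLS19} applies.
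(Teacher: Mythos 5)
Your proposal is correct, but it replaces the paper's key compactness argument with a genuinely different one. The paper extracts a \emph{weakly} convergent subsequence $h_k^{\delta_j}\rightharpoonup z$, sets $\alpha_k:=\liminf_j\alpha_k^{\delta_j}$, identifies $z$ with the minimizer $h_k$ of the limiting Tikhonov functional via weak lower semicontinuity, and then upgrades weak to strong convergence by proving $\norm{h_k^{\delta_j}}\to\norm{h_k}$ (Radon--Riesz) through a delicate $\liminf/\limsup$ bookkeeping argument adapted from \cite[Lemma~5.2]{MaRi15}; the upper bound $\alpha_{\max,k}$ is obtained there only by contradiction. You instead trap the multipliers in a compact subinterval of $(0,\infty)$ --- the upper bound read off directly from Lemma~\ref{lm:a0}~(item~6.) combined with $\norm{b}-d_k\geq p(1-C_0)\norm{b}$ and (A1), the lower bound from Proposition~\ref{pr:alphaK_est} --- pass to a convergent subsequence of multipliers, and then obtain \emph{strong} convergence of $h_k^{\delta_j}$ in one stroke from norm-resolvent convergence of $F'(x_k^{\delta_j})^*F'(x_k^{\delta_j})+\alpha_k^{\delta_j}I$, whose inverses are uniformly bounded by $\underline\alpha_k^{-1}$. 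This is more elementary (no weak compactness, no \cite{MaRi15} argument) at the price of leaning harder on the norm continuity of $F'$ from (A1) and on a strictly positive lower bound for the multipliers; the latter degenerates when $\eta=0$, which you flag honestly, but the paper's own positivity argument for $\liminf_j\alpha_k^{\delta_j}$ (the chain $c_k\leq\eta\norm{b_k}<c_k$) suffers from exactly the same limitation, so this is not a gap relative to the published proof. The remaining steps --- the nested-subsequence bookkeeping, discarding finitely many indices so that $k<k^*(\delta_j)$, and passing \eqref{eq:rb} to the limit to certify that $x_{k+1}$ is a successor of $x_k$ --- match the paper's structure.
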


\begin{proof}
We use an inductive argument. Since $x_0^\delta = x_0$ for every $\delta \geq 0$,
the assertion is clear for $k = 0$.
Our main argument consists of repeatedly choosing a subsequence of the current
subsequence. In order to avoid a notational overload, we denote a subsequence
of $\{ \delta_j \}_j$ again by $\{ \delta_j \}_j$.

Suppose by induction that the assertion holds true for some $k \in \mathbb{N}$,
i.e., that there exists a subsequence $\{ \delta_j \}_j$ and $\{ x_\ell \}_{\ell=0}^k$
satisfying
$$
x_\ell^{\delta_j} \to x_\ell \ \text{ as } \ j \to \infty \, ,
\qquad \text{ for } \ \ell = 0, \dots, k \, ,
$$
where $k < k^*(\delta_j)$ and \,$x_{\ell+1}$ is a sucessor of $x_{\ell}$, for
$\ell = 0, \dots, k-1$.
Since $x_{k+1}^{\delta_j}$ is a sucessor of $x_k^{\delta_j}$ (for each $\delta_j$),
there exists (for each $\delta_j$) a positive number $\alpha_k^{\delta_j}$ such
that $x_{k+1}^{\delta_j} = x_k^{\delta_j} + h_k^{\delta_j}$,
with $h_k^{\delta_j}$ as in \eqref{eq:hk} and
\begin{equation} \label{eq:a1}
% c_k^{\delta_j}\leq\left\Vert A_k^{\delta_j}h_k^{\delta_j}
% -b_k^{\delta_j}\right\Vert \leq d_k^{\delta_j} .
\norm{F(x_k^{\delta_j}) - y^{\delta_j} + F'(x_k^{\delta_j}) h_k^{\delta_j}}
\, \in \, [c_k^{\delta_j} , d_k^{\delta_j}] \, .
\end{equation}
Our next goal is to prove the existence of a sucessor $x_{k+1}$ of $x_k$ and of a
subsequence $\{ \delta_j \}_j$ of the current subsequence such that
$x_{k+1}^{\delta_j} \to x_{k+1}$ as $j \to \infty$, ensuring that
\begin{equation} \label{eq:induction}
x_\ell^{\delta_j} \to x_\ell \ \text{ as }\ j \to \infty \, ,
\quad \text{ for } \ \ell = 0 , \dots, k+1 \, ,
\end{equation}
and completing the inductive argument.
We divide this proof in 4 steps as follows:

Step~1. We find a vector $z \in X$ such that, for some subsequence $\{ \delta_j \}_j$
\begin{equation} \label{eq:b1}
h_k^{\delta_j} \rightharpoonup z \ \text{ as }\ j \to \infty \, ,
\end{equation}

Step~2. We define
\begin{equation} \label{eq:b3}
\alpha_k := \underset{j\to\infty}{\lim\inf} \, \alpha_k^{\delta_j}
\end{equation}

and prove that $\alpha_k > 0$, which permit us to define
$h_k$ as in \eqref{eq:hk} as well as $x_{k+1} := x_k + h_k$.

Step~3. We show that $h_k=z$, which ensures that $h_k^{\delta_j} \rightharpoonup h_k$.

Step~4. We validate that
\begin{equation} \label{nconvergence}
\norm{h_k^{\delta_j}}  \to \norm{h_k} \, , \ \text{ as } \ j \to \infty \, ,
\end{equation}

which together with $h_k^{\delta_j} \rightharpoonup h_k$ proves that
$h_k^{\delta_j}\to h_k$ and, consequently, $x_{k+1}^{\delta_j} \to x_{k+1}$.

Finally, we prove that $x_{k+1}$ is a sucessor of $x_k$, which validates
\eqref{eq:induction}.

\noindent Proof of Step~1. Since the sequence $\{ h_k^{\delta_j} \}_{j\in\mathbb{N}}$
is bounded (see \eqref{eq:kgain}), there exists a subsequence $\{ \delta_j \}$ of the current
subsequence, and a vector $z \in X$ such that \eqref{eq:b1} holds. Consequently,
\begin{equation} \label{eq:b2}
A_k^{\delta_j} h_k^{\delta_j} - b_k^{\delta_j} \ \rightharpoonup \
A_k z - b_k \, , \ \text{ as } \ j \to \infty
\end{equation}
(here $A_k^{\delta_j} = F'(x_k^{\delta_j})$, $A_k = F'(x_k)$,
$b_k^{\delta_j} = y^{\delta_j} - F(x_k^{\delta_j})$, $b_k = y - F(x_k)$).

\noindent Proof of Step~2. If $\alpha_k$ in \eqref{eq:b3} is not positive, we
conclude from (A2)
\begin{align*}
\lim\inf_j \norm{ A_k^{\delta_j} h_k^{\delta_j} - b_k^{\delta_j} }^{2}
  & \leq \lim\inf_j \, T_{k,\delta_j,\alpha_k^{\delta_j}} (h_k^{\delta_j})
    \leq \lim\inf_j \, T_{k,\delta_j,\alpha_k^{\delta_j}} (x^\dag - x_k) \\
  &   =  \lim\inf_j \big(  \norm{ A_k^{\delta_j}(x^\dag - x_k) - b_k^{\delta_j} }^{2}
         + \alpha_k^{\delta_j} \norm{ x^\dag - x_k }^{2} \big) \\
  &   =  \norm{ A_k(x^\dag - x_k) - b_k }^{2} \, \leq \, \eta^{2} \norm{ b_k }^{2}
\end{align*}
(here $T_{k,\delta,\alpha}(h) := \norm{F'(x_k^\delta)h - y^\delta + F(x_k^\delta)}^2
+ \alpha \norm{h}^2$). This leads to the contradiction
$$
c_k = \lim_j c_k^{\delta_j} \leq \lim\inf_j \norm{ A_k^{\delta_j} h_k^{\delta_j} - b_k^{\delta_j} }
      \leq \eta \norm{ b_k } < c_k \, .
$$
Thus $\alpha_k > 0$ holds.
We define $T_{k,\alpha}(h) := T_{k,\delta,\alpha}(h)$ with $\delta = 0$,
$h_k := \arg\min_{h \in X}\, T_{k,\alpha_k}(h)$, and $x_{k+1} := x_k + h_k$.
In order to prove that $x_{k+1}$ is a sucessor of $x_k$, it is necessary to prove that
\begin{equation}  \label{eq:a2}
c_k \leq \norm{ A_k h_k - b_k } \leq d_k \, .
\end{equation}
We first prove that $h_k = z$ (see Step~3).

\noindent Proof of Step~3. From \eqref{eq:b1}, \eqref{eq:b2} and \eqref{eq:b3},
it follows
\begin{align*}
T_{k,\alpha_k}(z) &  = \norm{ A_k z - b_k }^2 + \alpha_k \norm{z}^2
\ \leq \ \lim\inf_j (\norm{A_k^{\delta_j} h_k^{\delta_j} - b_k^{\delta_j} }^2
         + \alpha_k^{\delta_j} \norm{ h_k^{\delta_j} }^2)
\\
&  = \lim\inf_j T_{k,\delta_j,\alpha_k^{\delta_j}} (h_k^{\delta_j})
\ \leq\ \lim\inf_j T_{k,\delta_j,\alpha_k^{\delta_j}}(h_k) \ = \ T_{k,\alpha_k}(h_k) \, .
\end{align*}
Since $h_k$ is the unique minimizer of $T_{k,\alpha_k}$, we conclude that
$h_k=z$. Thus, $h_k^{\delta_j} \rightharpoonup h_k$ as $j \to \infty$.
The last inequalities also ensure that
$\lim\inf_j T_{k,\delta_j,\alpha_k^{\delta_j}}(h_k^{\delta_j}) = T_{k,\alpha_k}(h_k)$.
This guarantees the existence of a subsequence satisfying
\begin{equation} \label{eq:b4}
\lim\limits_{j\to \infty} \, T_{k,\delta_j,\alpha_k^{\delta_j}} (h_k^{\delta_j})
\ = \ T_{k,\alpha_k}(h_k) \, .
\end{equation}

\noindent Proof of Step~4. 
The goal is to validate \eqref{nconvergence}, which, together with
$h_k^{\delta_j} \rightharpoonup h_k$, imply $h_k^{\delta_j} \to h_k$.
Consequently, \eqref{eq:a2} follows from \eqref{eq:a1}.
This ensures that $x_{k+1}$ is a sucessor of $x_k$ and validates
\eqref{eq:induction}, completing the proof of the theorem.

We first prove the existence of a constant $\alpha_{\max,k}$ such that
$$
\alpha_k^{\delta_j} \leq \alpha_{\max,k}
\ \text{ for all }\ j \in \mathbb{N} \, .
$$
Indeed, if such a constant did not exist, we could find a subsequence
satisfying $\alpha_k^{\delta_j}\to \infty$ as $j\to \infty.$
Thus, since
$$
\alpha_k^{\delta_j} \norm{ h_k^{\delta_j} }^2
\ \leq \ T_{k,\delta_k,\alpha_k^{\delta_j}} (h_k^{\delta_j})
\ \leq \ T_{k,\delta_k,\alpha_k^{\delta_j}} (0)
\   =  \ \norm{ b_k^{\delta_j} }^2 ,
$$
we would have,
$$
\lim\limits_{j \to \infty} \alpha_k^{\delta_j} \norm{ h_k^{\delta_j} }^2
\ \leq \ \norm{ b_k }^2 \ < \ \infty \, ,
$$
which would imply $h_k^{\delta_j} \to 0$. Consequently,
$$
\lim\limits_{j \to \infty} \norm{ A_k^{\delta_j} h_k^{\delta_j} - b_k^{\delta_j} }
\ = \ \norm{ b_k }
\ > \ d_k \ = \ \lim\limits_{j \to \infty} d_k^{\delta_j} ,
$$
which would imply the contradiction
$\norm{ A_k^{\delta_j} h_k^{\delta_j} - b_k^{\delta_j} } \ > \ d_k^{\delta_j}$,
for $j$ large enough.

Now we validate \eqref{nconvergence}. This proof follows the lines of \cite[Lemma~5.2]{MaRi15}.
Define
\begin{align*}
a_j := \norm{ h_k^{\delta_j} }^2,
\quad
a := \lim\sup a_j \,,
\quad
c := \norm{ h_k } ^{2},
\quad
re_j := \norm{ A_k^{\delta_j} h_k^{\delta_j} - b_k^{\delta_j} }^2,
\quad
re := \lim\inf re_j \, .
\end{align*}
As $\norm{ h_k } \leq \lim\inf \norm{ h_k^{\delta_j} }$, it suffices
to prove that $a \leq c$. Assume the contrary. From \eqref{eq:b4},
there exists a number $N_1 \in \mathbb{N}$ such that
\begin{equation} \label{eq:c1}
j \ \geq \ N_1 \ \Longrightarrow \
           T_{k,\delta_j,\alpha_k^{\delta_j}} (h_k^{\delta_j})
  \  <  \  T_{k,\alpha_k}(h_k) + \alpha_k \frac{a-c}{2} \, .
\end{equation}
From definition of $\lim\inf,$ there exist constants $N_2$,
$N_3 \in \mathbb{N}$ such that
\begin{equation} \label{eq:c2}
j \geq N_2 \Longrightarrow re_j \geq re - \alpha_k (a-c) / 6
\end{equation}
and
\begin{equation} \label{eq:c3}
j \geq N_3 \Longrightarrow \alpha_k^{\delta_j} \geq \alpha_k - \alpha_k (a-c)/6a \, .
\end{equation}
Moreover, from definition of $\lim\sup$, we conclude that for each
$M \in \mathbb{N}$ fixed, there exists an index $j \geq M$ such that
\begin{equation} \label{eq:c4}
a_j \geq a - \alpha_k (a-c) / (6\alpha_{\max,k}) \, .
\end{equation}
Therefore, for $M := \max\{ N_1, N_2, N_3 \}$, there exists an index $j \geq M$ such that
\begin{align*}
T_{k,\alpha_k}(h_k)
& \leq re + \alpha_k c = re + (\alpha_k - \alpha_k^{\delta_j}) a
       + \alpha_k^{\delta_j} (a - a_j) + \alpha_k^{\delta_j} a_j - \alpha_k(a - c)
\\
& \leq \textstyle (re_j + \alpha_k \frac1 6 (a-c)) + \alpha_k \frac1 6 (a-c)
       + \alpha_k \frac1 6 (a-c) + \alpha_k^{\delta_j} a_j - \alpha_k (a-c)
\\
&   =  \textstyle re_j + \alpha_k^{\delta_j} a_j - \alpha_k \frac1 2 (a-c)
    =  T_{k,\delta_j,\alpha_k^{\delta_j}} (h_k^{\delta_j})
       - \alpha_k \frac1 2 (a-c) < T_{k,\alpha_k}(h_k) \, ,
\end{align*}
where the second inequality follows from \eqref{eq:c2},
\eqref{eq:c3}, \eqref{eq:c4}, while the last inequality follows
from \eqref{eq:c1}. This leads to the obvious contradiction
$T_{k,\alpha_k}(h_k) < T_{k,\alpha_k}(h_k)$, proving that
$a \leq c$ as desired. Thus, \eqref{nconvergence} holds and
the proof is complete.
\end{proof}

\begin{theorem}[Regularization] \label{th:reg}
Let Assumptions (A1), (A2) and (A3) hold, and $\{ \delta_j \}_{j\in\mathbb{N}}$
be a positive zero-sequence.
Assume that the (finite) sequences $\{ x_k^{\delta_j} \}_{0 \leq k \leq k^*(\delta_j)}$,
$j \in \mathbb{N}$, are fixed, where $x_{k+1}^{\delta_j}$ is a sucessor
of $x_k^{\delta_j}$.
Then, every subsequence of $\{ x_{k^*(\delta_j)}^{\delta_j} \}_{j\in\mathbb{N}}$
has itself a subsequence converging strongly to a solution of \eqref{eq:inv-probl}.
\end{theorem}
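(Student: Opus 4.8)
The plan is to pass to subsequences so as to reduce to two scenarios and then invoke the already-established stability and exact-data convergence results. Starting from an arbitrary subsequence of $\{x_{k^*(\delta_j)}^{\delta_j}\}_j$, which we relabel as the whole sequence, we may, after extracting a further subsequence (again relabeled), assume that either (a) $k^*(\delta_j)\equiv N$ for a fixed $N\in\mathbb{N}$ and all $j$, or (b) $k^*(\delta_j)\to\infty$ as $j\to\infty$.

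In case (a), I would run the inductive construction from the proof of Theorem~\ref{th:stabil} for the $N$ admissible steps $k=0,\dots,N-1$ (each legitimate since $k<k^*(\delta_j)=N$), producing a subsequence $\{\delta_{j_m}\}$ and vectors $x_0,\dots,x_N$ with $x_\ell^{\delta_{j_m}}\to x_\ell$ for $\ell=0,\dots,N$ and $x_{\ell+1}$ a sucessor of $x_\ell$ for $\ell<N$. The stopping rule in Step~[3.3] gives $\norm{F(x_N^{\delta_{j_m}})-y^{\delta_{j_m}}}\le\tau\delta_{j_m}$; passing to the limit and using continuity of $F$ together with $\norm{y^{\delta_{j_m}}-y}\le\delta_{j_m}\to 0$ yields $F(x_N)=y$. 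Hence $x_{k^*(\delta_{j_m})}^{\delta_{j_m}}=x_N^{\delta_{j_m}}\to x_N$, a solution of \eqref{eq:inv-probl}.

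In case (b), I would apply Theorem~\ref{th:stabil} to obtain an (infinite) noiseless sequence $\{x_k\}_{k\in\mathbb{N}}$ together with a nested family of subsequences $\{\delta_j\}\supseteq S_1\supseteq S_2\supseteq\cdots$ such that $x_\ell^{\delta_j}\to x_\ell$ along $S_k$ for $\ell=0,\dots,k$ (the nesting being intrinsic to the diagonal construction in that proof). By Theorem~\ref{th:noiseless}, $x_k\to\bar x$ with $F(\bar x)=y$; moreover $\norm{\bar x-x_0}=\lim_k\norm{x_k-x_0}\le 2\norm{x^\star-x_0}<\rho$, so $\bar x\in B_\rho(x_0)$ and the monotonicity \eqref{eq:kgain} remains valid with $\bar x$ in place of $x^\star$ (its proof, via Proposition~\ref{pr:00}, uses only $F(\bar x)=y$ and $\bar x\in B_\rho(x_0)$). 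Consequently $\norm{\bar x-x_{k^*(\delta_j)}^{\delta_j}}\le\norm{\bar x-x_N^{\delta_j}}$ whenever $k^*(\delta_j)\ge N$. Taking a diagonal subsequence $\{\delta_{j_m}\}$ with $\delta_{j_m}\in S_m$ (so that $x_\ell^{\delta_{j_m}}\to x_\ell$ for every fixed $\ell$, while $k^*(\delta_{j_m})\to\infty$), one obtains $\limsup_m\norm{\bar x-x_{k^*(\delta_{j_m})}^{\delta_{j_m}}}\le\norm{\bar x-x_N}$ for every $N$; letting $N\to\infty$ forces $x_{k^*(\delta_{j_m})}^{\delta_{j_m}}\to\bar x$, a solution. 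Since an arbitrary subsequence was chosen at the outset, this proves the claim.

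I expect case (b) to be the delicate part: one must ensure that the subsequences furnished by Theorem~\ref{th:stabil} at increasing depths $N$ can be nested into a single diagonal subsequence realizing $x_\ell^{\delta_{j_m}}\to x_\ell$ for all $\ell$ simultaneously, and one must verify that $\bar x\in B_\rho(x_0)$ so that the monotonicity estimate transfers to $\bar x$ — without that, the comparison $\norm{\bar x-x_{k^*}^{\delta}}\le\norm{\bar x-x_N^{\delta}}$ is not available. Case (a) and the passage from "some subsequence converges" to the stated conclusion are routine.
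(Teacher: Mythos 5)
Your proposal is correct and follows essentially the same route as the paper: the same dichotomy between bounded (reduced to constant) and divergent stopping indices, the stability theorem plus the stopping rule in the first case, and the noiseless limit combined with error monotonicity and a diagonal extraction in the second. The only (cosmetic) difference is that you diagonalize over the nested depth-$N$ subsequences and let $N\to\infty$, whereas the paper diagonalizes over $\varepsilon=1/n$; your explicit check that $\bar x\in B_\rho(x_0)$ so that \eqref{eq:kgain} transfers to $\bar x$ is a point the paper leaves implicit.
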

\begin{proof}
Since any subsequence of $\{ \delta_j \}_{j\in\mathbb{N}}$
is itself a positive zero-sequence, it suffices to prove that
$\{ x_{k^*(\delta_j)}^{\delta_j} \}_{j\in\mathbb{N}}$ has a
subsequence converging to a solution. We consider two cases:

\noindent {\bf Case~1.} The sequence $\{ k^*(\delta_j) \} _{j\in\mathbb{N}}$ is bounded. \\
Thus, there exists a constant $M\in\mathbb{N}$ such that
$k^*(\delta_j) \leq M$ for all $j \in \mathbb{N}$. Thus, the sequence
$\{ x_{k^*(\delta_j)}^{\delta_j} \}_{j \in \mathbb{N}}$ splits into at
most $M+1$ subsequences having the form
$\{ x_m^{\delta_{j_{n}}} \}_{n \in \mathbb{N}}$, with fixed $m \leq M$.
Pick one of these subsequences. From Theorem~\ref{th:stabil}, this
subsequence has itself a subsequence (again denoted by
$\{ x_m^{\delta_{j_{n}}} \}_{n \in \mathbb{N}}$) converging to some $x_m \in X$,
i.e.,
$$
\lim_{n\to \infty} x_{k^*(\delta_{j_{n}})}^{\delta_{j_{n}}}
\ =\ \lim_{n \to \infty} x_m^{\delta_{j_{n}}} \ =\ x_m \, .
$$
Notice that $x_m$ is a solution of \eqref{eq:inv-probl}. Indeed,
\begin{align*}
\norm{ y - F(x_{m}) }
 & =    \lim_{n\to\infty} \norm{ y - F( x_{k^*(\delta_{j_n})}^{\delta_{j_n}} ) } \\
 & \leq \lim_{n\to\infty} \big( \norm{ y - y^{\delta_{j_n}} }
        + \norm{ y^{\delta_{j_n}} - F( x_{k^*(\delta_{j_n})}^{\delta_{j_n}} ) } \big) \\
 & \leq \lim_{n\to\infty} (\tau + 1) \, \delta_{j_n} \ = \ 0 \, .
\end{align*}

\noindent {\bf Case~2.} The sequence $\{ k^*(\delta_j) \} _{j\in\mathbb{N}}$ is not bounded. \\
Thus, there is a subsequence such $k^*(\delta_j) \to \infty$ as $j \to \infty$.
Let $\varepsilon > 0$ be given and consider the noiseless sequence $\{ x_k \}_{k\in\mathbb{N}}$
constructed in last theorem. Since $x_{k+1}$ is a sucessor of $x_k$ for all
$k\in\mathbb{N}$, $\{ x_k \}_{k\in\mathbb{N}}$ converges to some solution $x^{\ast}$ of
\eqref{eq:inv-probl} (see Theorem~\ref{th:noiseless}). Then, there exists $M =
M(\varepsilon) \in \mathbb{N}$ such that
$$
\norm{ x_{M} - x^{\ast} } \ < \ \textstyle \frac 1 2 \, \varepsilon \, .
$$
On the other hand, there exists $J \in \mathbb{N}$ such that $k^*(\delta_j) \geq M$,
for $j \geq J$.
Consequently, it follows from the monotonicity of the iteration error (see
Theorem~\ref{th:wd}) that
$$
j \geq J \ \Longrightarrow \ \norm{ x_{k^*(\delta_j) }^{\delta_j} - x^\ast }
  \leq \norm{ x_{M}^{\delta_j} - x^\ast } \, .
$$
Moreover, it follows from Theorem~\ref{th:stabil}, the existence of a subsequence
$\{ \delta_{j_m} \}$ (depending on $M(\varepsilon)$) and the existence of $N \in \mathbb{N}$
such that
$$
m \geq N \ \Longrightarrow \ \norm{ x_M^{\delta_{j_m}} - x_M }
  \ < \ \textstyle\frac 1 2 \varepsilon \, .
$$
Consequently, for $m \geq \max \{ J, N \}$ (which simultaneously guarantees
$j_m \geq m \geq J$ and $m \geq N$), it holds
\begin{equation} \label{eq:a3}
\norm{ x_{k^*(\delta_{j_m})}^{\delta_{j_m}} - x^\ast }
  \leq \norm{ x_M^{\delta_{j_m}} - x^\ast }
  \leq \norm{ x_M^{\delta_{j_m}} - x_M } + \norm{ x_M - x^\ast }
  \ < \ \varepsilon \, .
\end{equation}

Notice that the subsequence $\{ \delta_{j_m} \}$ depends on $\varepsilon$.
We now construct an $\varepsilon$-independent subsequence using a diagonal argument: for
$\varepsilon = 1$ in \eqref{eq:a3}, there is a subsequence of $\{ \delta_j \}$
(called again $\{ \delta_j \}$) and ${j_1} \in \mathbb{N}$ such that
$$
\norm{ x_{k^*(\delta_{j_1})}^{\delta_{j_1}} - x^{\ast} } \ < \ 1 \, .
$$
Now, for $\varepsilon = 1/2$, there exists a subsequence $\{ \delta_j \}$
of the previous one, and $j_2 > j_1$ such that
$$
\norm{ x_{k^*(\delta_{j_2})}^{\delta_{j_2}} - x^\ast} \ < \ 2^{-1} \, .
$$
Arguing in this way, we construct a subsequence $\{ \delta_{j_n} \}_{n\in\mathbb{N}}$
satisfying
$$
\norm{ x_{k^*(\delta_{j_n})}^{\delta_{j_n}} - x^{\ast} } \ < \ n^{-1} ,
$$
from what follows $\lim x_{k^*(\delta_{j_n})}^{\delta_{j_n}} = x^\ast$.
\end{proof}

\begin{remark}
If the solution of \eqref{eq:inv-probl} referred in Theorem~\ref{th:stabil}
were independent of the chosen subsequence, then any subsequence of
$\{ x_{k^*(\delta_j)}^{\delta_j} \}_{j\in\mathbb{N}}$ would have itself
a subsequence converging to the same solution.
This would be enough to ensure that the whole sequence
$\{ x_{k^*(\delta_j)}^{\delta_j} \}_{j\in\mathbb{N}}$ converges to $x^\ast$.

However, $x^\ast$ in the above proof depends on the noiseless
sequence $\{ x_n \}_{n\in\mathbb{N}}$ (whose existence is guaranteed by
Theorem~\ref{th:stabil}), which in turn depends on the fixed sequences
$\{ x_k^{\delta_j} \}_{0 \leq k \leq k^*(\delta_j)}$, $j \in \mathbb{N}$.
Consequently, if different subsequences of $\{ \delta_j \}_{j\in\mathbb{N}}$
are chosen, the solution of \eqref{eq:inv-probl} refered in Theorem~\ref{th:stabil}
can be different.
\end{remark}

\begin{corollary} \label{cor:reg}
Under the assumptions of Theorem~\ref{th:stabil}, the following assertions
hold true:
\begin{enumerate}
\item The sequence $\{  x_{k^*(\delta_j)}^{\delta_j} \}_{j\in\mathbb{N}}$
splits into convergent subsequences, each one converges to a solution of
\eqref{eq:inv-probl};

\item If $x^\star$ in (A3) is the unique solution of \eqref{eq:inv-probl} in
$B_\rho(x_0)$, then $x_{k^*(\delta_j)}^{\delta_j} \to x^\star$ as $j\to \infty$;

\item If the null-space condition \eqref{eq:nucleo} holds, then
$\{ x_{k^*(\delta_j)}^{\delta_j} \}$ converges to the $x_0$-minimal-norm solution
$x^{\dagger}$ as $j\to \infty$.
\end{enumerate}
\end{corollary}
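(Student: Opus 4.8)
The plan is to derive all three assertions from Theorem~\ref{th:reg}, combined with the monotonicity of the iteration error established in Theorem~\ref{th:wd} and the null-space argument already employed in the proof of Theorem~\ref{th:noiseless}.

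\textbf{Assertion 1} is essentially a reformulation of Theorem~\ref{th:reg}: that theorem shows $\{x_{k^*(\delta_j)}^{\delta_j}\}_{j\in\mathbb{N}}$ is relatively sequentially compact and that each of its cluster points solves \eqref{eq:inv-probl}. To obtain the stated splitting I would argue inductively --- extract a first convergent subsequence (with a solution as limit), and if infinitely many indices remain, apply Theorem~\ref{th:reg} again to the remaining terms, iterating this procedure; this decomposes $\{x_{k^*(\delta_j)}^{\delta_j}\}_{j}$ into (at most countably many) convergent subsequences, each converging to a solution of \eqref{eq:inv-probl}.

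\textbf{Assertions 2 and 3} both rely on the elementary subsequence principle: a sequence converges to $x^\ast$ as soon as every one of its subsequences has a further subsequence converging to $x^\ast$. By Theorem~\ref{th:reg}, an arbitrary subsequence of $\{x_{k^*(\delta_j)}^{\delta_j}\}_j$ admits a sub-subsequence converging to some solution $\bar x$ of \eqref{eq:inv-probl}, so it remains only to identify $\bar x$. In both cases one first records that, by \eqref{eq:kgain} and (A3), every iterate satisfies $\norm{x_k^{\delta_j} - x_0} \le 2\,\norm{x^\star - x_0} < \rho$; the same bound holds for the noiseless sequence of Theorem~\ref{th:stabil}, so the limit satisfies $\bar x \in B_\rho(x_0)$. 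For Assertion~2, since $\bar x$ is then a solution of \eqref{eq:inv-probl} lying in $B_\rho(x_0)$ and $x^\star$ is by hypothesis the only such solution, $\bar x = x^\star$, and the subsequence principle gives $x_{k^*(\delta_j)}^{\delta_j}\to x^\star$.

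For \textbf{Assertion 3} I would mimic the final paragraph of the proof of Theorem~\ref{th:noiseless}. Using \eqref{eq:hk}, \eqref{eq:xkp1} and the null-space inclusion \eqref{eq:nucleo}, each update obeys $x_{k+1}^{\delta_j} - x_k^{\delta_j} \in R\big(F'(x_k^{\delta_j})^*\big) \subset N\big(F'(x_k^{\delta_j})\big)^\perp \subset N\big(F'(x^\dagger)\big)^\perp$, whence $x_k^{\delta_j} - x_0 \in N(F'(x^\dagger))^\perp$ for all $k$ and $j$; passing to the limit along the convergent sub-subsequence (the orthogonal complement of a subspace is closed) gives $\bar x - x_0 \in N(F'(x^\dagger))^\perp$, so $\bar x - x^\dagger \in N(F'(x^\dagger))^\perp$ by \eqref{eq:x+}. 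On the other hand, since $\bar x, x^\dagger \in B_\rho(x_0)$ both solve \eqref{eq:inv-probl}, (A2) yields $\norm{F'(x^\dagger)(\bar x - x^\dagger)} \le (1+\eta)\norm{F(\bar x) - F(x^\dagger)} = 0$, i.e.\ $\bar x - x^\dagger \in N(F'(x^\dagger))$. Hence $\bar x - x^\dagger = 0$ and the subsequence principle finishes the proof. The main obstacle I anticipate is the bookkeeping in Assertion~3: one must make sure the inclusion $x_k^{\delta_j} - x_0 \in N(F'(x^\dagger))^\perp$ holds uniformly in $j$ so that it survives the passage to the limit, and that the argument covers both the bounded and the unbounded cases for $k^*(\delta_j)$; the rest is routine.
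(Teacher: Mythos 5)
Your proposal is correct. Assertions~1 and~2 are handled exactly as in the paper: Assertion~1 is a direct repackaging of Theorem~\ref{th:reg}, and Assertion~2 is the subsequence principle combined with the observation (via \eqref{eq:kgain} and (A3)) that every cluster point is a solution lying in $B_\rho(x_0)$, hence equals $x^\star$ by uniqueness. For Assertion~3 you take a genuinely different route. The paper simply invokes the last part of Theorem~\ref{th:noiseless} --- under \eqref{eq:nucleo} every noiseless sequence converges to $x^\dagger$ --- and then leans on the internal structure of the proof of Theorem~\ref{th:reg}, where the limit of the extracted sub-subsequence is a term or the limit of such a noiseless sequence. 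You instead re-run the orthogonality argument directly on the noisy iterates: $h_k^{\delta_j}\in R\bigl(F'(x_k^{\delta_j})^*\bigr)\subset N\bigl(F'(x^\dagger)\bigr)^\perp$ for every $k$ and $j$, so $x_{k^*(\delta_j)}^{\delta_j}-x_0$ lies in the closed subspace $N(F'(x^\dagger))^\perp$ uniformly in $j$, the limit $\bar x$ of any convergent sub-subsequence inherits this, and \eqref{eq:x+} together with (A2) forces $\bar x=x^\dagger$. This is slightly longer but more self-contained: it identifies the limit without caring whether the sub-subsequence from Theorem~\ref{th:reg} arose from the bounded or unbounded case for $k^*(\delta_j)$, whereas the paper's one-line argument implicitly relies on how that theorem's proof constructs the limit. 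Your worry about ``uniformity in $j$'' is in fact a non-issue, since the inclusion holds for each $j$ separately and orthogonal complements are closed.
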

\begin{proof}
The proof of Assertion~1. is straightforward.
Assertion~2. follows from the fact that, if $x^\star$ is the
unique solution of \eqref{eq:inv-probl} in $B_\rho(x_0)$, then
any subsequence of $\{ x_{k^*(\delta_j)}^{\delta_j} \}_{j\in\mathbb{N}}$
has itself a subsequence converging to $x^\star$.
To prove Assertion~3. notice that, if \eqref{eq:nucleo} holds then any
noiseless sequence $\{ x_n \}_{n\in\mathbb{N}}$ converges to $x^{\dagger}$
(Theorem~\ref{th:noiseless}). Thus, any subsequence of
$\{ x_{k^*(\delta_j)}^{\delta_j} \}_{j\in\mathbb{N}}$ has itself a
subsequence converging to $x^{\dagger}$, and the proof follows.
\end{proof}

% --------------------------------------------------------------------
\section{Numerical experiments} \label{sec:numeric}

\subsection{The model problem and its discretization} \label{ssec:4.1}

We test the performance of our method applying it to the non-linear and
ill-posed inverse problem of EIT (Electrical Impedance Tomography)
introduced by Calder\'{o}n~\cite{calderon}. A survey article concerning this
problem is \cite{borcea}.

Let $\Omega \subset \mathbb{R}^2$ be a bounded and simply connected
Lipschitz domain. The EIT problem consists in applying different
configurations of electric currents on the boundary of $\Omega $ and then
reading the resulting voltages on the boundary of $\Omega $ as well. The
objective is recovering the electric conductivity in the whole of set $%
\Omega .$ This problem is governing by the variational equation
\begin{equation}  \label{eit}
\int_{\Omega} \gamma \nabla u \nabla \varphi = \int_{\partial \Omega} g
\varphi \ \text{ for all } \varphi \in H_{\diamondsuit}^1 ( \Omega ) \, ,
\end{equation}
where $g\colon \partial \Omega \rightarrow \mathbb{R}$ represents the
electric current, $\gamma \colon \Omega \rightarrow \mathbb{R}$ is the
electric conductivity and $u\colon \Omega \rightarrow \mathbb{R}$ represents
the electric potential. Employing the Lax-Milgram Lemma, one can prove that,
for each $g \in L_{\diamondsuit}^2 ( \partial \Omega) := \{ v \in L^2 (
\partial \Omega ) : \int_{\partial\Omega} v = 0 \}$ and $\gamma \in
L_{+}^{\infty} (\Omega) := \{ v \in L^\infty(\Omega) : v \geq c > 0 \text{ a.e.
in } \Omega \}$ fixed, there exists a unique $u \in
H_{\diamondsuit}^1(\Omega) := \{ v \in H^1(\Omega) : \int_{\partial\Omega} v
= 0 \}$ satisfying \eqref{eit}. The voltage $f\colon \partial \Omega
\rightarrow \mathbb{R}$ is the trace of the potencial $u$ ($f = u
\vert_{\partial\Omega}$), which belongs to $L_{\diamondsuit}^2(\partial%
\Omega)$.

For a fixed conductivity $\gamma \in L_{+}^\infty(\Omega)$, the bounded
linear operator $\Lambda_{\gamma}\colon L_{\diamondsuit}^2 (\partial\Omega)
\rightarrow L_{\diamondsuit}^2(\partial \Omega)$, $g\mapsto f$, which
associates the electric current with the resulting voltage is the so-called 
\textit{Neumann-to-Dirichlet} map (in short NtD). The \textit{forward
operator} associated with EIT is defined by 
\begin{equation}  \label{F}
{\cal F}(\gamma) \ = \ \Lambda _{\gamma} \,,
\end{equation}
with ${\cal F}\colon L_{+}^{\infty}(\Omega) \subset L^\infty(\Omega) \rightarrow {%
\mathcal{L}}(L_{\diamondsuit}^2(\partial\Omega),
L_{\diamondsuit}^2(\partial\Omega))$. The EIT inverse problem consists in
finding $\gamma$ in above equation for a given $\Lambda_\gamma$. However, in
practical situations, only a part of the data can be observed and therefore
the NtD map is not completely available. One has to apply $d \in \mathbb{N}$
currents $g_j \in L_{\diamondsuit}^2(\partial\Omega)$, $j = 1,\dots , d$,
and then record the resulting voltages $f_j = \Lambda_{\gamma} g_j$. We thus
fix the vector $(g_1, \dots , g_d) \in
(L_{\diamondsuit}^2(\partial\Omega))^d$ and introduce the operator $F
\colon L_{+}^\infty(\Omega) \subset L^\infty(\Omega) \rightarrow
(L_{\diamondsuit}^2(\partial\Omega ))^d$, $\gamma \mapsto (\Lambda_\gamma
g_1, \dots, \Lambda_\gamma g_d )$, which is Fr\'echet-differentiable%
\footnote{%
Equipped with an inner product defined in a very natural way, induced by the
inner product in $L^2(\partial\Omega)$, the space $(L_{\diamondsuit}^2(%
\partial\Omega))^d$ is a Hilbert space.} (see, e.g., \cite{rieder2}).

Since an analytical solution of \eqref{eit} is not available in general, the
inverse problem needs to be solved with help of a computer. For this reason,
we construct a triangulation for $\Omega $, $\mathcal{T}=\{T_{i}:i=1,\dots
,M\}$, with $M=1476$ triangles (see the third picture in Figure~\ref{figura0}%
) and approximate $\gamma $ by piecewise constant conductivities: define the
finite dimensional space
$V := (\mathrm{span}\{\chi _{T_{1}},\dots ,\chi_{T_{M}}\} , \norm{\cdot}_{L^{2}(\Omega )} )$.%
\footnote{Notice that $\mathrm{span}\{\chi_{T_{1}},\dots ,\chi_{T_{M}}\} \subset L^{\infty}(\Omega)$.}
We now search the
conductivity in $V$, which means that our reconstructions always have the
form $\sum_{i=1}^{M}\theta _{i}\chi _{T_{i}}$, with $(\theta _{1},\dots
,\theta _{M})\in \mathbb{R}^{M}$. With this new framework our forward
operator reads 
\begin{equation} \label{FG}
F \colon \widetilde{V}\subset V \rightarrow (L^{2}(\partial \Omega))^{d} ,
\quad \gamma \mapsto (\Lambda _{\gamma }g_{1},\dots ,\Lambda _{\gamma
}g_{d}) \, ,
\end{equation}%
where $\widetilde{V} = L_{+}^{\infty}(\Omega) \cap V$.

It is still unclear whether the forward operator associated with the
continuous model of EIT, defined in \eqref{F}, satisfies the tangential cone
condition \eqref{eq:w-tcc}, but the version presented in the restricted set %
\eqref{FG} guarantees this result, at least in a small ball around a
solution, see \cite{rieder2}. The Fr\'{e}chet derivative of $F$, $%
F'\colon \mathrm{int}(\widetilde{V})\rightarrow \mathcal{L}%
(V,(L^{2}(\partial \Omega ))^{d})$, satisfies $F'(\gamma
)h=(w_{1}|_{\partial \Omega },\dots ,w_{d}|_{\partial \Omega })$, where $%
w_{j}\in H_{\diamondsuit }^{1}(\Omega )$ is the unique solution of 
\begin{equation}
\int_{\Omega }\gamma \nabla w_{j}\nabla \varphi =-\int_{\Omega }h\nabla
u_{j}\nabla \varphi \ \text{ for all }\varphi \in H_{\diamondsuit
}^{1}(\Omega )\,,  \label{eit'}
\end{equation}%
with $u_{j}$ solving \eqref{eit} for $g=g_{j}$. The adjoint operator $%
F'(\gamma )^{\ast }\colon (L^{2}(\partial \Omega
))^{d}\rightarrow V$ is given by 
\begin{equation}
F'(\gamma )^{\ast }z=-\sum_{j=1}^{d}\nabla u_{j}\nabla \psi
_{z_{j}}\,,  \label{adjoint}
\end{equation}%
where $z:=(z_{1},\dots ,z_{d})\in (L^{2}(\partial \Omega ))^{d}$ and for
each $j=1,\dots ,d$, the vectors $u_{j}$ and $\psi _{z_{j}}$ are the unique
solutions of \eqref{eit} for $g=g_{j}$ and $g=z_{j}$ respectively.

In our numerical simulations we define $\Omega :=(0,1)\times (0,1)$ and
supply the current-vector $(g_1, \dots , g_d)$ with $d=8$ independent
currents: identifing the
faces of $\Omega $ with the numbers $m=0,1,2,3$, we apply the currents 
\begin{equation*}
g_{2m+k}(x)=%
\begin{cases}
\cos (2k\pi x) & :\ \text{on the face }m \\[1mm] 
\qquad \qquad 0 & :\ \text{elsewhere on }\partial \Omega%
\end{cases}%
\end{equation*}%
for $k=1,2$. The exact solution $\gamma ^{+}$ consists of a constant
background conductivity $1$ and an inclusion $B\subset \Omega $ with
conductivity $2$: 
\begin{equation*}
\gamma ^{+}(x):=%
\begin{cases}
\ 2 & :\ x\in B \\ 
\ 1 & :\ \text{otherwise}%
\end{cases}%
.
\end{equation*}%
The set $B$ models two balls with radii equal $0.15$ and center at the
points $(0.35,0.35)$ and $(0.65,0.65)$. The data,%
\begin{equation}
y:=(\Lambda _{\gamma ^{+}}g_{1},\dots ,\Lambda _{\gamma ^{+}}g_{d}),
\label{data}
\end{equation}%
corresponding to the exact solution $\gamma ^{+}$ are computed using the
Finite Element Method (FEM). The problems \eqref{eit} and \eqref{eit'} have
been solved by FEM as well, but using a much coarser discretization mesh
than the one used to generate the data for avoiding inverse crimes, see
Figure~\ref{figura0}.

%%%%%%%%%%%%%%%%%%%%%%%%%%%%%%%%%%%%%%%%%%%%%%
\begin{figure}[tb]
\vspace*{-4mm} 
\begin{tabular}{lll}
\parbox[t]{5cm}{\hspace*{-4mm}\includegraphics[height=4cm, trim = 4cm 9cm 4cm 9cm,clip]{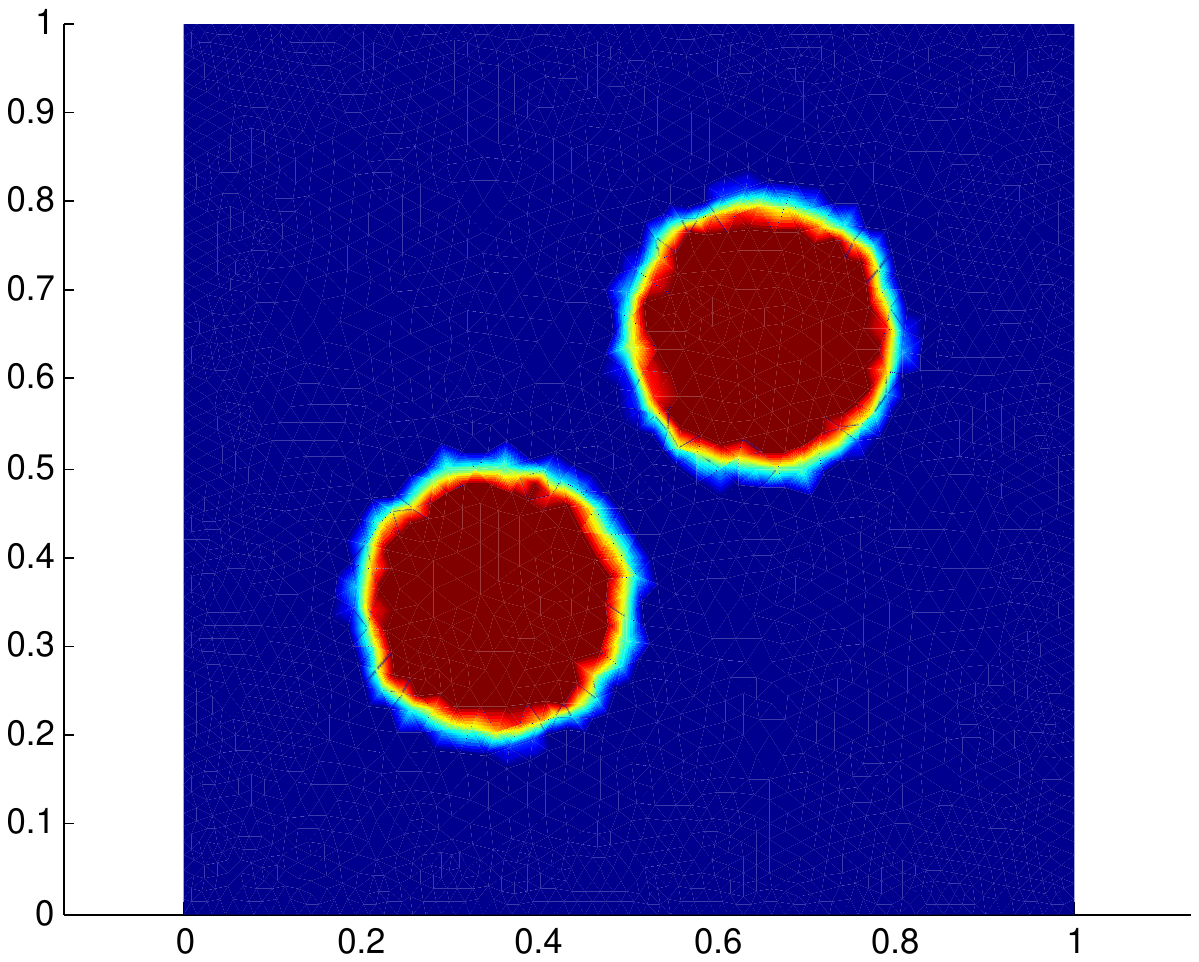}
} & 
\parbox[t]{5cm}{\hspace*{-4mm}\includegraphics[height=4cm, trim = 4cm 9cm 4cm 9cm,clip]{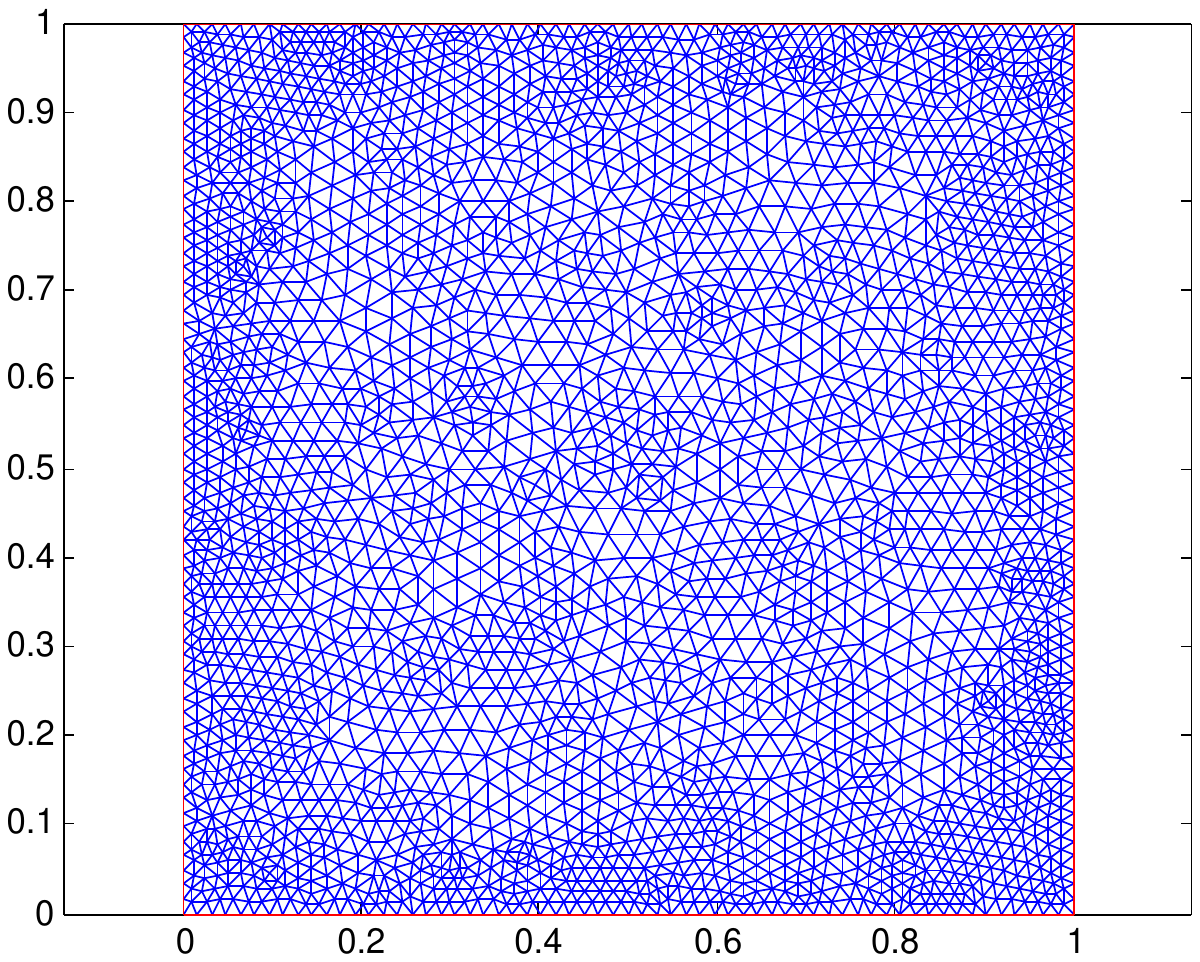}
} & 
\parbox[t]{5cm}{\hspace*{-4mm}\includegraphics[height=4cm, trim = 4cm 9cm 4cm 9cm,clip]{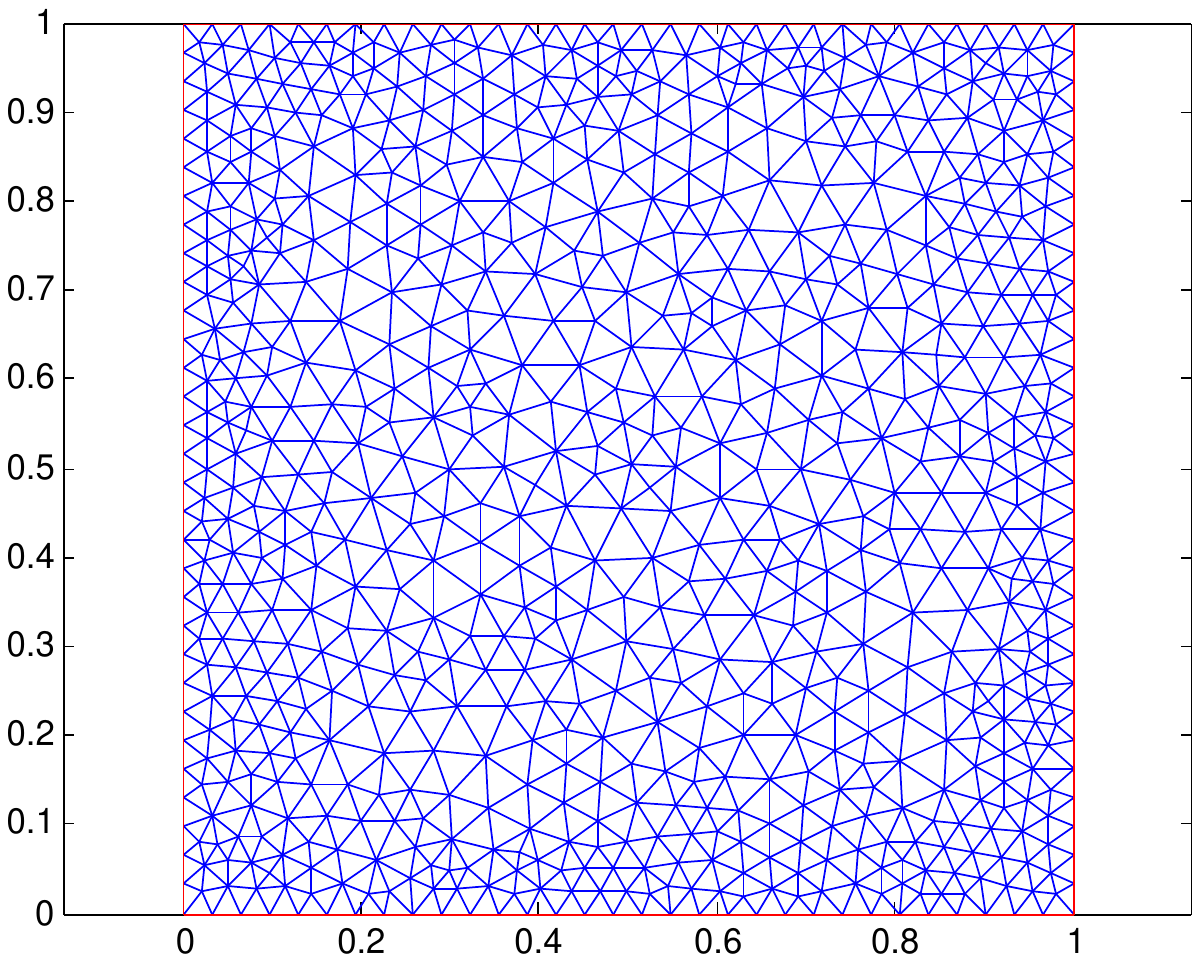}
}%
\end{tabular}%
\caption{Left: Sought solution. Middle: Mesh used to generate the data.
Right: Mesh used to solve the inverse problem.}
\label{figura0}
\end{figure}
%%%%%%%%%%%%%%%%%%%%%%%%%%%%%%%%%%%%%%%%%%%%%

It is well known that in this specific problem, undesirable instability
effects may arise from an unfavorable selection of the geometry of the mesh.
For avoiding this problem we employ a strategy using a \textit{%
weight-function} $\omega \colon \Omega \rightarrow \mathbb{R}$ to define the
weighted-space $L_{\omega }^{2}(\Omega ):=\{f\colon \Omega \rightarrow 
\mathbb{R}:\int_{\Omega }|f|^{2}\omega <\infty \}$. This alteration changes
the evaluation of the adjoint operator \eqref{adjoint} in the discretized
setting, see \cite{winkler} and \cite[Subsection $5.1.2$]{fabio3} for
details. In the mentioned references, the authors use the weight-function 
\begin{equation*}
\omega :=\sum_{i=1}^{M}\beta _{i}\chi _{T_{i}}\text{ \ \ \ with \ \ \ }\beta
_{i}:=\frac{\left\Vert F'(\gamma _{0})\chi _{T_{i}}\right\Vert
_{(L^{2}(\partial \Omega ))^{d}}}{|T_{i}|},
\end{equation*}%
where $|T_{i}|$ is the area of triangle $T_{i}$, and the initial iterate
$\gamma_0$ is the constant 1 function.

In the notation of Section~\ref{sec:intro} we have
$F: D(F) \subset X := ( \mathrm{span}\{\chi_{T_{1}},\dots ,\chi_{T_{M}}\} ,\norm{\cdot}_{L_{\omega}^2(\Omega)})
\to (L^{2}(\partial \Omega))^{d} =: Y$, where $D(F) = X \cap L_{+}^\infty(\Omega)$.
We define the \textit{relative error} in the $k-$th iterate $\gamma _{k}$ as 
\begin{equation} \label{it_er}
E_{k}:=100\,\frac{\norm{\gamma_k - \gamma^+}_X}{\norm{\gamma^+}_X} ,
\end{equation}
and use it to compare the quality of the reconstructions.
% The initial iterate is choosen to be the constant function $\gamma _{0}\equiv 1$,
% which results in the initial error $E_{0}=87.39\%$.
Finally, we corrupt the simulated data $y$ in \eqref{data} by adding
artificially generated random noise, with a \textit{relative} noise
level $\delta >0$,
\begin{equation} \label{rel_noise}
y^{\delta }=y+\delta \,\mathrm{noi} \left\Vert y\right\Vert_{Y}, 
\end{equation}
where $\mathrm{noi} \in Y$ is a uniformly distributed random variable such that $%
\left\Vert \mathrm{noi} \right\Vert_Y = 1$.

\subsection{Implementation of the range-relaxed Levenberg-Marquardt method} \label{ssec:4.2}

Now, we turn to the problem of finding a pair $(\alpha_k > 0 , h_k \in X)$ in
accordance to Step~[3.1] of Algorithm~I.

An usual choice for the parameters $\alpha_k$ is of geometric type, i.e.,
the parameters are defined \textit{a priori} by the rule $\alpha_k = r\alpha_{k-1}$,
where $\alpha_0 > 0$ and $0<r<1$ (the \textit{decreasing ratio}) are given.
This method is usually very efficient if a good guess for the constant $r$
is available. However, big troubles may arise if the decreasing ratio $r$
is chosen either too large or too small.
Indeed, on the one hand, if the constant $r$ is too large ($r \approx 1$),
then the method becomes slow and the computational costs increase considerably;
on the other hand, the Levenberg-Marquardt method becomes unstable in case $r$
is chosen too small ($r\approx 0$), see Figure~\ref{figura0.5} below.

%%%%%%%%%%%%%%%%%%%%%%%%%%%%%%%%%%%%%%%%%%%%%%
\begin{figure}[h]
\vspace*{4mm} 
\begin{tabular}{l}
\parbox[t]{8cm}{\hspace*{-2mm}\includegraphics[height=4.8cm, trim = 0cm 7cm 1cm 8cm,clip]{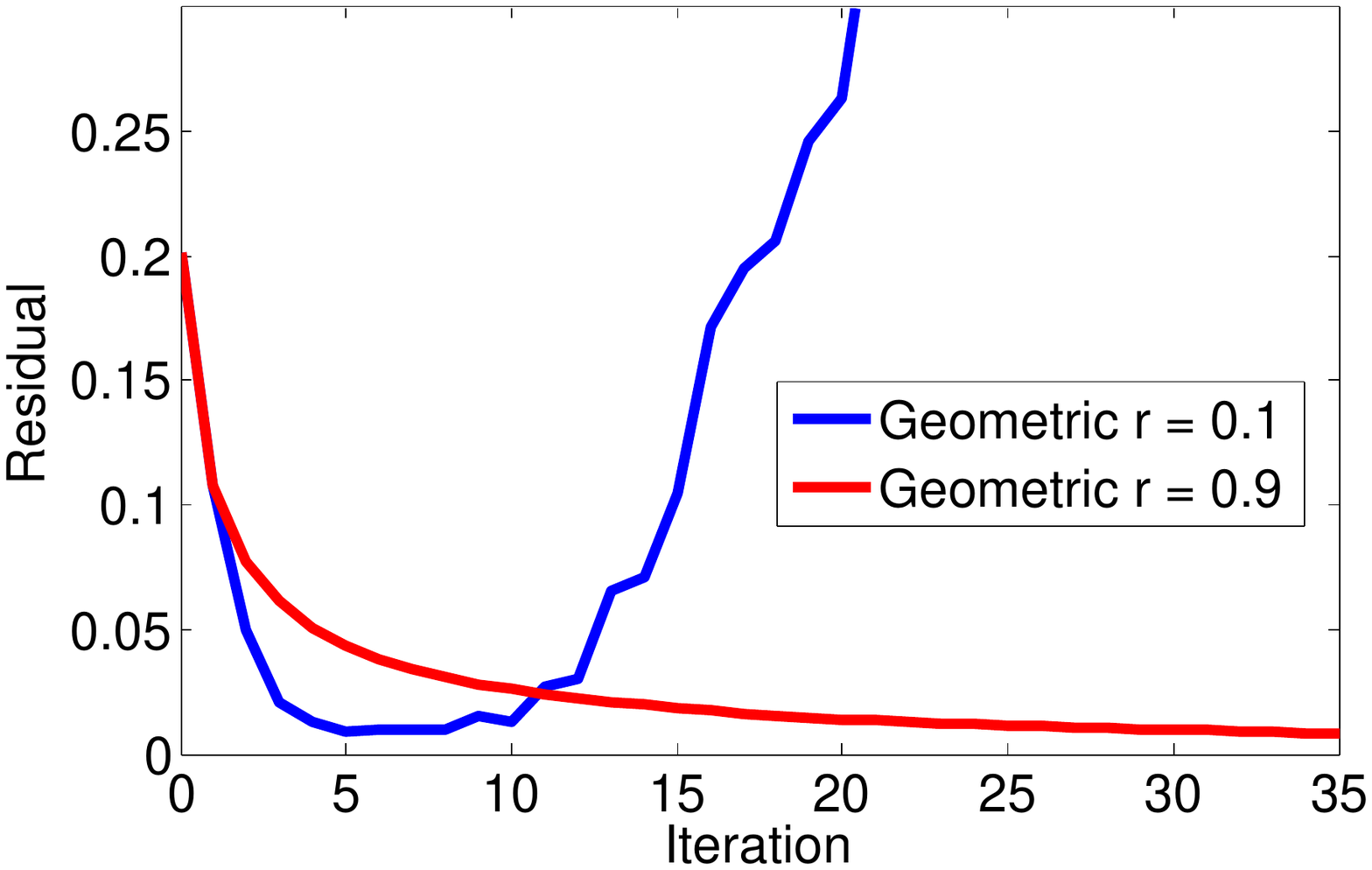}\\
}
\parbox[t]{8cm}{\hspace*{-2mm}\includegraphics[height=4.8cm, trim = 0cm 7cm 1cm 8cm,clip]{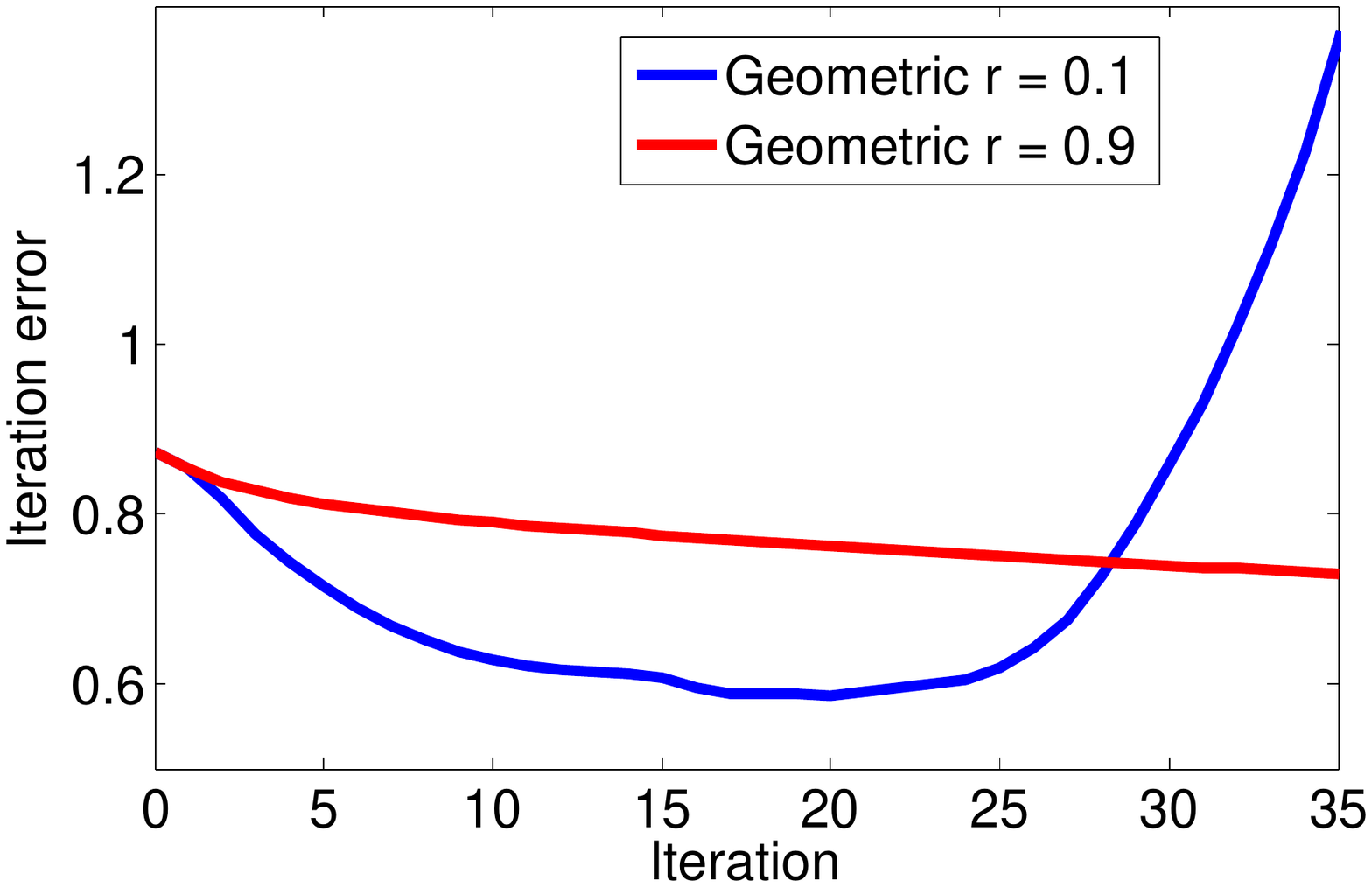}\\
}
\end{tabular}
\caption{Geometric choice of the parameters $\alpha_k$ for $r = 0.1$ and $r = 0.9$, with
$\alpha_0 = 2$.
Noise level $\delta = 0.1 \%$, $\eta = 0.4$ and $\tau = 1.3(1+\eta)/(1-\eta)$.
LEFT: residual, RIGHT: iteration error.}
\label{figura0.5}
\end{figure}
%%%%%%%%%%%%%%%%%%%%%%%%%%%%%%%%%%%%%%%%%%%%%

Notice that the $\alpha_k$ defined by the geometric choice does not necessarily
satisfy the problem in Step~[3.1].
We propose a strategy for choosing the decreasing ratio $r$ in each step,
so that the resulting parameter $\alpha_k$ (and the corresponding $h_k$)
are in agreement with Step~[3.1].
For the actual computation of the ratio $r$ in the current step, we use information
on the current iteration and past iterations as well. This is described in the sequel.

We adopt the notation
\begin{equation*}
H_k(\alpha) = \Vert y^{\delta} - F(\gamma_{k}) - F'(\gamma_{k}) h_{\alpha} \Vert ,
\ \alpha > 0 \, ,
\end{equation*}%
where $h_{\alpha}$ is given by
\begin{equation} \label{Tikhonov}
% T_{\alpha ,\delta ,k}(h) = \Vert y^\delta - F(\gamma_{k}) - F' (\gamma_{k}) h \Vert^{2}
% + \alpha \Vert h \Vert^{2} .
h_\alpha = \big( F'(\gamma_k)^* F'(\gamma_k) + \alpha I \big)^{-1}
             F'(\gamma_k)^* (y^\delta - F(\gamma_k)) \, .
\end{equation}
According to Step~[3.1] in Algorithm~I, we need to determine $\alpha_ k > 0$
such that $H_k(\alpha_k) \in [c_k, d_k]$, where $c_k$ and $d_k$ are
defined in \eqref{eq:ck} and \eqref{eq:dk} respectively.
For doing that, we have employed the \textit{adaptive strategy}
introduced in \cite{MML19}. This algorithm is based on the geometric method
but allows adaptation of the decreasing ratio using {\em a posteriori} information.
First, we define the constants
\begin{equation} \label{intervalo}
\widehat{c}_k = p_1 c_k + ( 1 - p_1 ) d_k
\quad \text{ and } \quad
\widehat{d}_k = p_2 c_k + ( 1 - p_2 ) d_k,
\end{equation}
where $0 < p_1 < p_2 < 1$.
Notice that $[\widehat{c}_k, \widehat{d}_k] \subset [ c_k, d_k ]$.

Choose the initial parameter $\alpha_0 > 0$; compute $h_0 := h_{\alpha_0}$
and $\gamma_1$ according to Algorithm~I.

Choose the initial decreasing ratio $0 < r_0 < 1$, define
$\alpha_1 = r_0 \, \alpha_0$;
compute $h_1 := h_{\alpha_1}$ and $\gamma_2$ according to Algorithm~I.

For $k \geq 1$, we define $\alpha_{k+1} = r_k \alpha_k$, where

% Now, we target the smaller interval $\left[ 
% \widehat{c}_{k},\widehat{d}_{k}\right] \subset \left[ c_{k},d_{k}\right] .$
% After fixing the initial decreasing ratio $0<r_{1}<1$ and the \textit{%
% correctors} $a_{1}>1$ and $a_{2}\in \left( 0,1\right) ,$ we start our method
% with $\alpha _{0}>0,$ and in the $k-$th iteration, $k\geq 1$, we define $%
% \alpha _{k}=r_{k}\alpha _{k-1}$, where

\begin{equation} \label{eq:a1a2}
r_k = \left\{ 
\begin{array}{ll}
a_{1}r_{k-1}, & \quad \mathrm{if}\ c_{k-1} \leq H_{k-1}(\alpha _{k-1}) < \widehat{c}_{k-1} \\ 
a_{2}r_{k-1}, & \quad \mathrm{if}\ \widehat{d}_{k-1} < H_{k-1}(\alpha _{k-1}) \leq d_{k-1} \\ 
r_{k-1}, & \quad  \mathrm{if}\ H_{k-1}(\alpha _{k-1}) \in [ \widehat{c}_{k-1} , \widehat{d}_{k-1} ]
\end{array}
\right. .
\end{equation}
Here the constants $0 < a_2 < 1 < a_1$ play the role of correction factors,
and are chosen {\em a priori}.

The idea of the \textit{adaptive strategy} is to observe the behavior of the function
$H_k$ and try to determine how much the parameter $\alpha_k$ should be
decreased in the next iteration.
For example, the number $H_k(\alpha_k)$ lying to the left of the smaller
interval $[\widehat{c}_k, \widehat{d}_k]$ means that $\alpha_k$ was too
small.
We thus multiply the decreasing ratio $r_{k-1}$ by the number $a_1 > 1$, in
order to increase it, and consequently, to decrease the parameter $\alpha_k$
slower than in the previous step, trying to hit $[\widehat{c}_k , \widehat{d}_k]$
in the next iteration.
This algorithm is efficient in terms of computational cost: Like the
geometric choice for $\alpha_k$, it requires only one minimization of a
Tikhonov functional in each iteration.
Further, the \textit{adaptive strategy} has the additional advantage of correcting the
decreasing ratio if this ratio is either too large or too small.

An attentive reader could object that, in some iterations, the evaluated
parameter $\alpha_k$ may lead to a number $H_k(\alpha_k)$ which does
not belong to the interval $[c_k, d_k]$ defined in Step~[3.1].
This is indeed possible!
In this situation, we apply the secant method in order to recalculate
$\alpha_k$ such that $H_k(\alpha_k) \in [c_k, d_k]$, before
starting the next iteration.
This is however an expensive task, since each step of the secant
method demands the additional minimizations of Tikhonov functionals.

It is worth noticing that this situation has been barely observed in our
numerical experiments, occurring only in the cases when either the initial
decreasing ratio $r_0$ or the initial guess $\alpha_0$ are poorly chosen.

\subsection{Numerical realizations} \label{ssec:4.3}

For the constant $\tau$ in \eqref{def:tau-sigma} we use $\tau = 1.3( 1+\eta ) / (1-\eta)$,
where $\eta = 0.4$ is the constant in (A2).
Moreover, we choose $p = 0.1$ and $\varepsilon = 0.1 [\tau(1-\eta)-(1+\eta)] / \eta\tau$
in \eqref{def:tau-sigma}.
The constants in \eqref{intervalo} are $p_{1} = 1/3$ and $p_{2} = 2/3$, while
the constants in \eqref{eq:a1a2} are $a_{1} = 2$ and $a_{2} = 1/2$.
\bigskip

\noindent
{\bf First test (one level of noise):} \\
The goal of this test is to investigate the performance of our rrLM method with adaptive
strategy (a posteriori) for computing the parameters, with respect of different choices
of initial decreasing ratio $r_0$.

As observed in Figure~\ref{figura0.5}, the performance of the LM method with geometric
choice (a priori) of parameters is very sensitive to the choice of the (constant)
decreasing ratio $r < 1$.

We implement the rrLM method (using \textit{adaptive strategy}) with
$r_0 = 0.1$ and $r_0 = 0.9$. In Figure~\ref{figura1} the results of the the rrLM method
are compared with the LM method using geometric choice of parameters
(see {\color{black} top-left, top-right and bottom-left} pictures).

--- [GREEN] rrLM with $r_0 = 0.1$, reaches discrepancy with $k^* = 11$ steps;

--- [MAGENTA] rrLM with $r_0 = 0.9$, reaches discrepancy with $k^* = 11$ steps;

--- [RED] LM with $r = 0.9$, reaches discrepancy with $k^* = 36$ steps;

--- [BLUE] LM with $r = 0.1$, does not reach discrepancy. \\
The noise level is $\delta = 0.1\%$. All methods are started with $\alpha_0 = 2$.
The last picture in Figure~\ref{figura1} ({\color{black} bottom-right}) shows the
values of the linearized residual $H_k(\alpha_k)$ as well as the intervals
$[c_k,d_k]$ (see \eqref{eq:ck} and \eqref{eq:dk}) for the rrLM with $r_0 = 0.9$.

%%%%%%%%%%%%%%%%%%%%%%%%%%%%%%%%%%%%%%%%%%%%%%
\begin{figure}[h]
% \vspace*{4mm}
% \begin{tabular}{llll}
% \parbox[t]{5cm}{\hspace*{-2mm}\includegraphics[height=2.8cm, trim = 0cm 7cm 1cm 8cm,clip]{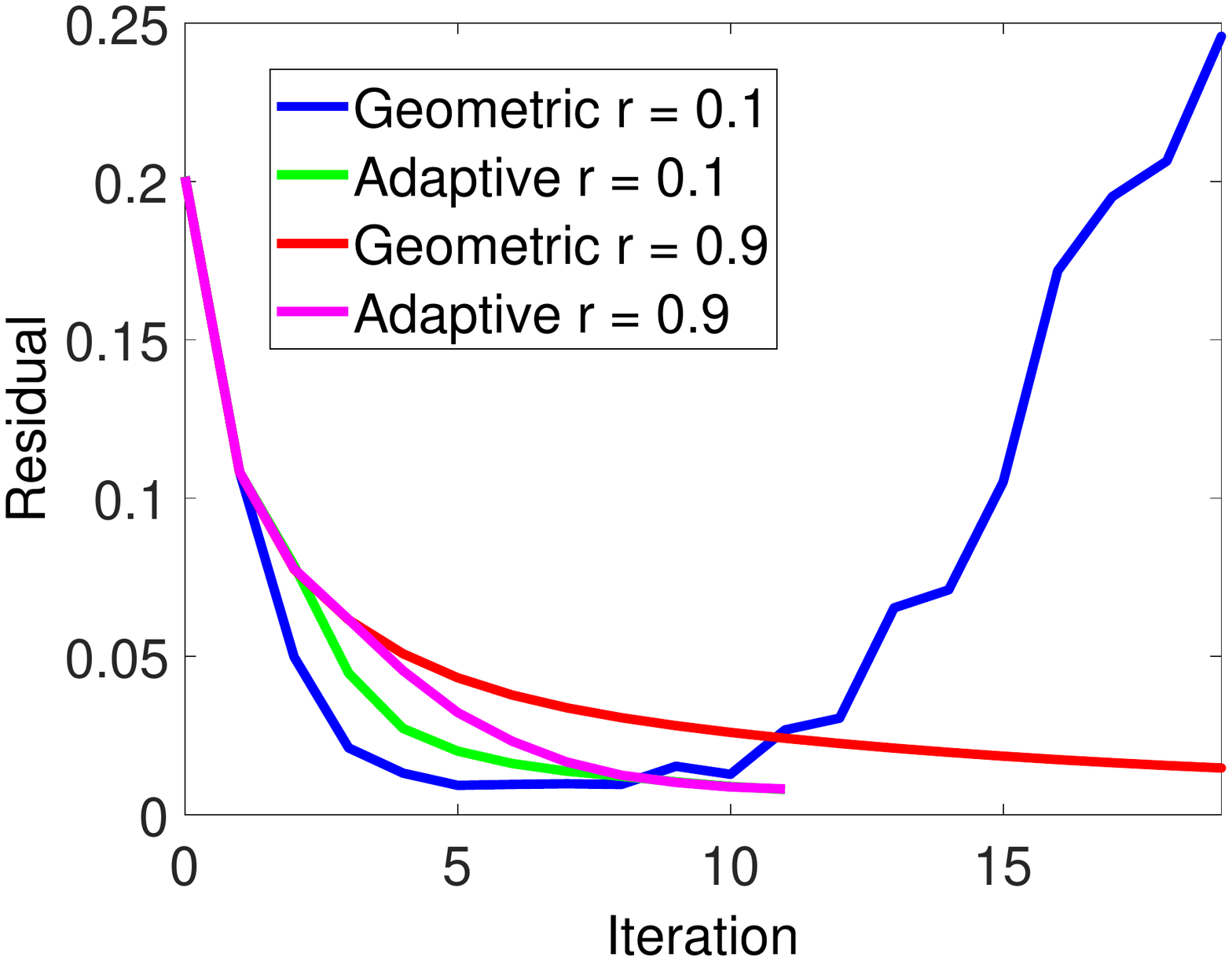}\\} & 
% \parbox[t]{5cm}{\hspace*{-2mm}\includegraphics[height=2.8cm, trim = 0cm 7cm 1cm 8cm,clip]{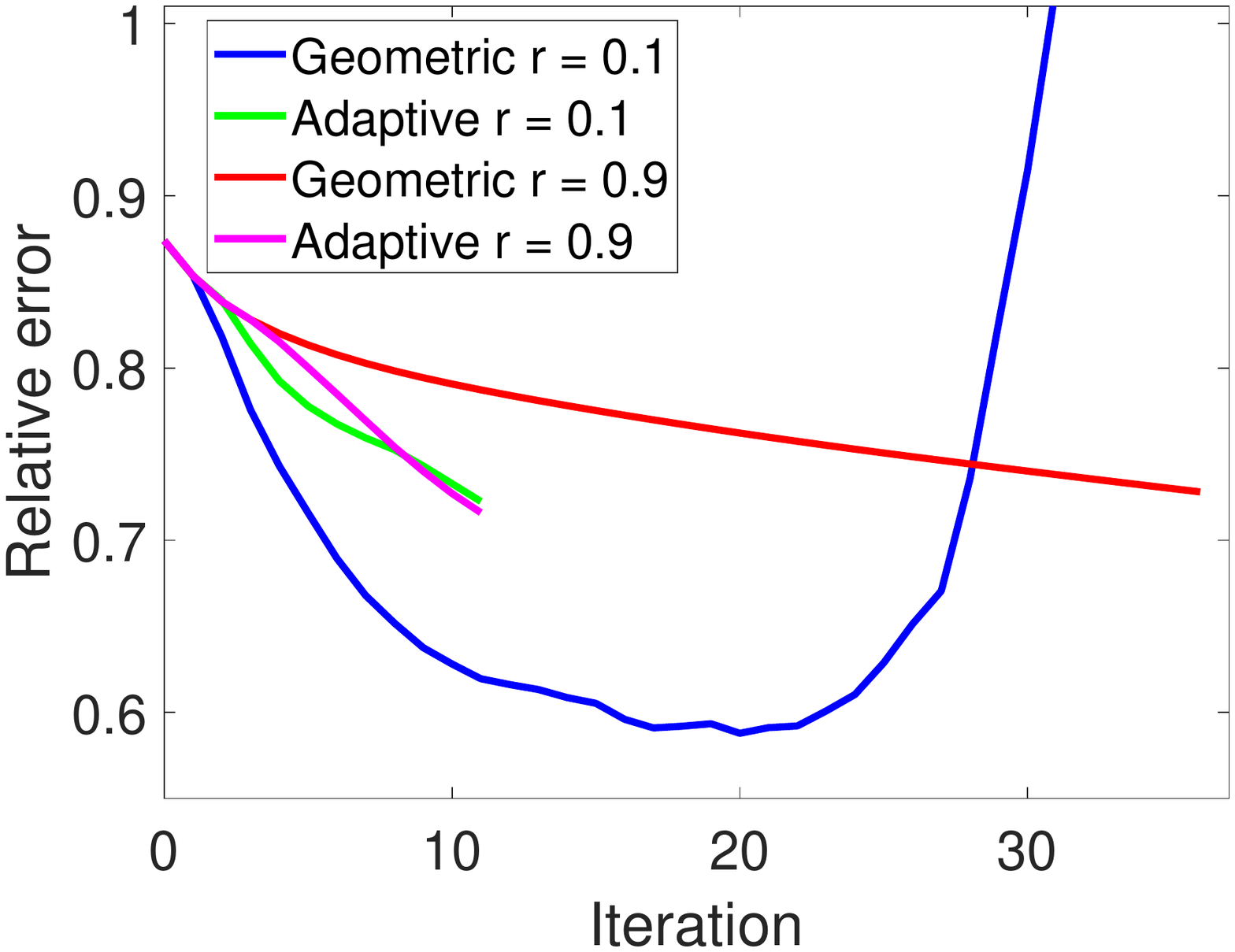}\\} & 
% \parbox[t]{5cm}{\hspace*{-2mm}\includegraphics[height=2.8cm, trim = 0cm 7cm 1cm 8cm,clip]{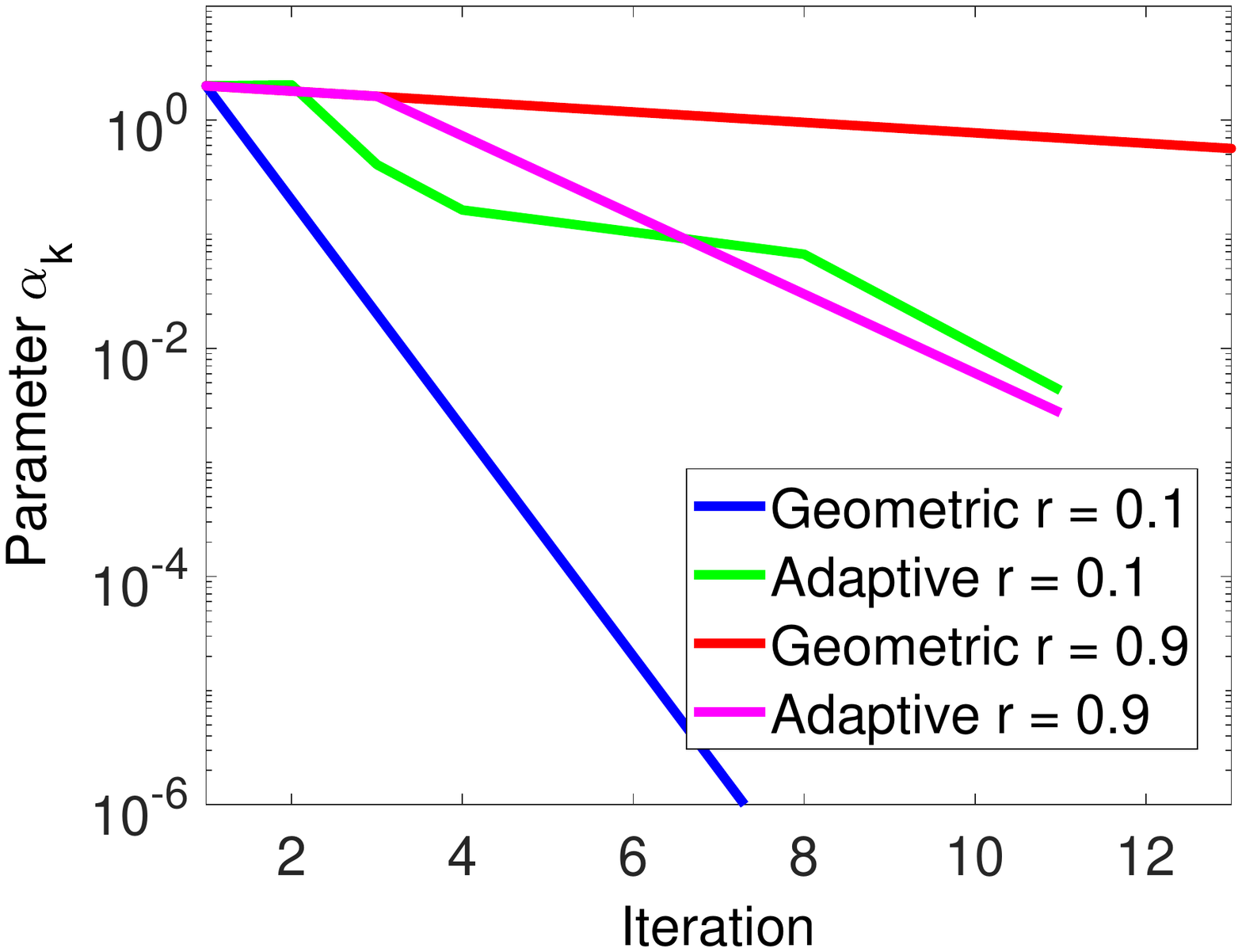}\\} 
% \parbox[t]{5cm}{\hspace*{0mm}\includegraphics[height=2.8cm, trim = 0cm 7cm 1cm 8cm,clip]{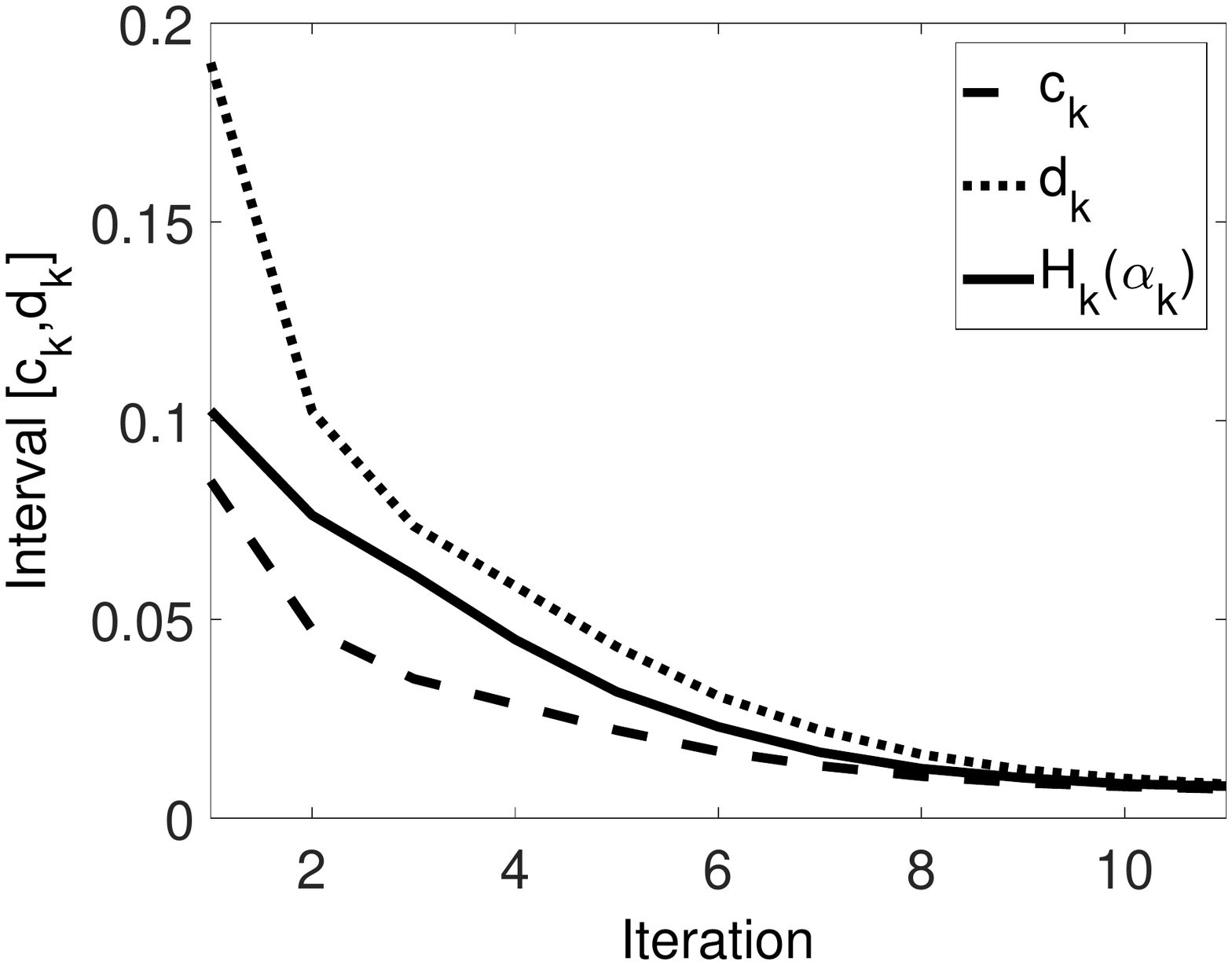}\\} & 
% \end{tabular}
\vspace{-3cm}
\centerline{\includegraphics[width=0.5\textwidth]{pictures/residuo5_1}
            \includegraphics[width=0.5\textwidth]{pictures/erro2}
}
\vspace{-4.5cm}
\centerline{\includegraphics[width=0.5\textwidth]{pictures/multiplicadores5}
            \includegraphics[width=0.5\textwidth]{pictures/intervalo5}
}
\vspace{-2.5cm}
\caption{First test: Noisy data, $\delta = 0.1\%$. {\color{black} TOP-LEFT}: Residual.
{\color{black} TOP-RIGHT}: Relative iteration error.
{\color{black} BOTTOM-LEFT}: Parameter $\protect\alpha _{k}$.
{\color{black} BOTTOM-RIGHT}: Linearized residual $H_k(\alpha_k)$ and the numbers $c_{k}$ and $d_{k}$ for the rrLM with $r_0 = 0.9$.}
\label{figura1}
\end{figure}
%%%%%%%%%%%%%%%%%%%%%%%%%%%%%%%%%%%%%%%%%%%%%

From this first test we draw the folllowing conclusions: \\
$\bullet$ The rrLM method (using \textit{adaptive strategy}) is robust
with respect of the choice of the (initial) decreasing ratio.
We tested two poor choices of initial decreasing ratios (namely $r_0 = 0.1$ and $r_0 = 0.9$);
nevertheless the performance of the rrLM method in both cases is stable and numericaly
efficient.
{\color{black} For rrLM method, the relative error obtained for $r_0 = 0.1$ is comparable
to that obtained for $r_0 = 0.9$ (see top-right picture in Figure~\ref{figura1}).} \\
$\bullet$ We also tested the rrLM method (using \textit{adaptive strategy})
and the LM method (using geometric choice of parameters) for $r = r_0 = 0.5$, which seems
to be the "optimal" choice of constant decreasing ratio.
In this case, both methods performed similarly.
Moreover, the performance of the rrLM method (number of iterations and numerical effort)
was similar to the ones depicted in Figure~\ref{figura1} using $r_0 = 0.1$ and $r_0 = 0.9$. \\
$\bullet$ The rrLM method (using \textit{adaptive strategy}) "corrects"
eventual poor choices of the decreasing ratio. If $r_0$ is too small, the \textit{adaptive strategy}
increases this ratio during the first iterations (GREEN curve in Figure~\ref{figura1})
preventing instabilities (compare with the LM method using geometric choice of parameters ---
BLUE curve in Figure~\ref{figura1}). \\
On the other hand, if $r_0$ is large (close to one), the \textit{adaptive strategy} decreases this
ratio during the first iterations (MAGENTA curve in Figure~\ref{figura1}), preventing slow
convergence (compare with the LM method using geometric choice of parameters --- RED curve
in Figure~\ref{figura1}). \\
$\bullet$ The last picture in Figure~\ref{figura1} ({\color{black} bottom-right}) shows that
the linearized residual $H_k(\alpha_k)$, computed using the \textit{adaptive strategy},
satisfies \eqref{eq:rb} in Step~[3.1] of Algorithm~I.
Consequently, this strategy provides a numerical realization of Algorithm~I, which is in
agreement with the theory devised in this article.
\bigskip

\noindent
{\bf Second test (several levels of noise):} \\
The goal of this test is twofold: (1st)
We validate the regularization property (see Theorem~\ref{th:reg} and Corollary~\ref{cor:reg})
by choosing different levels of noise $\delta > 0$, and observing what happens when the noise
level decreases;
(2nd) We compare the numerical effort of the rrLM method (with \textit{adaptive strategy})
with the LM method (with geometric choice of parameters).

In what follows we present a set of experiments with four different levels of noise
$\delta > 0$ namely, $\delta = 0.8\%$, $\delta =0.4\%$, $\delta =0.2\%$, $\delta = 0.1\%$.
In each scenario above, we implemented the rrLM method (with \textit{adaptive strategy})
as well as the LM method (with geometric choice of parameters).
 
{\color{black}
For the implementation of the LM method with geometric choice of parameters we use
the constant decreasing ratios: $r_0 = 0.9$, $r_0 = 0.5$ and $r_0 = 0.1$.
For the implementation of the rrLM method we used the same choices of $r_0$ as starting
value for $r$ together with the \textit{adaptive strategy}.
In all implementations $\alpha_0 = 2$ is used.
Comparisons of these methods are presented in Tables~\ref{table1} and~\ref{table2}.
Three distinct indicators are used, namely}
\\
\indent -- Number of iterations to reach discrepancy $k^* = k^*(\delta)$ (see Step~[3.3]);
\\
\indent -- Total number of Tikhonov functionals minimized for $k = 0, \dots k^*-1$, denoted by $N_{k^*}$;%
    \footnote{The numbers $k^*$ and $N_{k^*}$ are always the same in the geometric choice
    (LM method), but $N_{k^*}$ may be larger than $k^*$ in the \textit{adaptive strategy} (rrLM method).}
\\
\indent -- Relative iteration error at step $k = k^*$, denoted by $E_{k^*}$ (see \eqref{it_er}).%
    \footnote{It is worth noticing that the initial iteration error is $E_{0}=87.39\%$ in all
    four scenarios above.}

%%%%%%%%%%%%%%%%%%%%%%%%%%%%%%%%%%%%%%%
\begin{table}[t]
\centering
{\color{black}
\begin{tabular}{ccc|cc|cc}
\hline
\multicolumn{7}{c}{$k^*(N_{k^*})$} \\
\hline
       & \multicolumn{2}{c}{($r_0=0.9$)} & \multicolumn{2}{|c|}{($r_0=0.5$)} & \multicolumn{2}{c}{($r_0=0.1$)} \\
$\delta (\%)$ & rrLM & LM  &   rrLM & LM    &  rrLM & LM     \\
\hline\hline
$0.8$ & 5(6)  & 3(3)       & 4(5)  & 3(3)   & 5(8)  & 3(3)   \\
$0.4$ & 8(8)  & 8(8)       & 6(6)  & 4(4)   & 8(12) & 4(4)   \\
$0.2$ & 9(9)  & 18(18)     & 7(7)  & 7(7)   & 8(11) & Fails  \\
$0.1$ & 11(11)& 35(35)     & 10(10)& 10(10) & 11(14)& Fails  \\
\hline
\end{tabular}
}
\caption{Comparison between rrLM and LM methods: Computational effort.}
\label{table1}
\end{table}
%%%%%%%%%%%%%%%%%%%%%%%%%%%%%%%%%%%%%%%
%%%%%%%%%%%%%%%%%%%%%%%%%%%%%%%%%%%%%%%
\begin{table}[t]
\centering
{\color{black}
\begin{tabular}{ccc|cc|cc}
\hline
\multicolumn{7}{c}{$E_{k^*}$} \\
\hline
       & \multicolumn{2}{c}{($r_0=0.9$)} & \multicolumn{2}{|c|}{($r_0=0.5$)} & \multicolumn{2}{c}{($r_0=0.1$)} \\
$\delta (\%)$
      &   rrLM & LM & rrLM & LM   & rrLM & LM   \\
\hline\hline
$0.8$ & 82.6 & 82.7 & 82.8 & 81.5 & 82.8 &  80.9 \\
$0.4$ & 79.7 & 79.7 & 79.5 & 79.7 & 79.6 &  79.5 \\
$0.2$ & 76.5 & 76.5 & 76.3 & 76.6 & 76.4 & Fails \\
$0.1$ & 71.5 & 72.9 & 71.6 & 71.7 & 72.1 & Fails \\
\hline
\end{tabular}
}
\caption{Comparison between rrLM and LM methods: Relative iterative error at
the final iteration.}
\label{table2}
\end{table}
%%%%%%%%%%%%%%%%%%%%%%%%%%%%%%%%%%%%%%%

\noindent From this second test we draw the following conclusions: \\
$\bullet$ \ For both methods $k^*$ increases and $E_{k^*}$ decreases as $\delta$
becomes smaller (validating the regularization property). \\
$\bullet$ \ For each fixed noise level $\delta$, the values of $E_{k^*}$ are similar for both methods. \\
{\color{black}
$\bullet$ \ If the noise level is small ($\delta = 0.1\%$ and $\delta = 0.2\%$),
the rrLM method is more efficient than the LM method for $r_0 = 0.9$.
Both methods perform similarly for $r_0 = 0.5$.
For $r_0 = 0.1$ the LM method fails to converge, while the rrLM method succeed
in reaching the stopping criterium. } \\
{\color{black}
$\bullet$ \ For higer levels of noise ($\delta = 0.4\%$ and $\delta = 0.8\%$),
both methods perform similarly for $r_0 = 0.9$ and $r_0 = 0.5$.
For $r_0 = 0.1$ the LM method converges faster than the rrLM method.
This is due to the fact that rrLM needs to correct the initial guess for
$\alpha_0 = 2$.
} \\
{\color{black}
$\bullet$ \ For levels of noise higher than $0.8\%$, the rrLM stops after 2
or less iterations (for different choices of $r_0$). Consequently, this experiments
do not give relevant information about the performance of our method. } \\
$\bullet$ \ For the rrLM method, the values of $k^*$ and $N_{k^*}$ are identical in most of the
scenarios of Table~\ref{table1}, i.e., only one Tikhonov functional is minimized in each step
(this is the same numerical cost for one step of the LM method with geometric choice of parameters).

The last conclusion validates the \textit{adaptive strategy} for computing the parameters
$\alpha_k$ as an efficient alternative for the numerical implementation of Step~[3.1] in
Algorithm~I.

% --------------------------------------------------------------------
\section{Final remarks and conclusions} \label{sec:conclusion}

In this article we address the Levenberg-Marquardt method for solving
nonlinear ill-posed problems and propose a novel range-relaxed criteria
for choosing the Lagrange multipliers, namely:
the new iterate is obtained as the projection of the current one onto a
level-set of the linearized residual function; this level belongs to an
interval (or \emph{range}), which is defined by the current nonlinear
residual and by the noise level (see Step [3.1] of Algorithm~I).

The main contributions in this article are:
\\
$\bullet$ \ We derive a complete convergence analysis:
convergence (Theorem~\ref{th:noiseless}),
stability (Theorem~\ref{th:stabil}),
semi-convergence (Theorem~\ref{th:reg}).
We also prove monotonicity of iteration error (Theorem~\ref{th:wd})
and geometric decay of residual (Proposition~\ref{pr:residual_decay}).
Moreover, we prove convergence to minimal-norm solution under additional
null-space condition (\ref{eq:nucleo}), in both exact and noisy data cases.
\\
$\bullet$ \ We give a novel proof for the stability result, which uses
non standard arguments.
In the classical stability proof, since each Lagrange multiplier is
uniquely defined by an (implicit) equation, the set of successors
(Definition~\ref{def:sucessor}) of each $x_k^\delta$ is singleton.
However, due to our range-relaxed criteria \eqref{eq:rb}, each
set of successors may contain infinitely many elements; consequently, the
subsequences $\{ \delta_{j_m} \}_{m\in\N}$ obtained in Theorem~%
\ref{th:stabil} do depend on the iteration index $k$.
\\
$\bullet$ \ We devise a numerical algorithm, based on the \textit{adaptive strategy}
(see Subsection~\ref{ssec:4.2}), for implementing the range-relaxed
criteria proposed in this article. Its main features are: \\[-4.2ex]
\begin{itemize}
\item[--] Efficiency in terms of computational cost: Like the LM with geometric
({\em a priori}) choice of parameters, it (almost always) requires only
one minimization of a Tikhonov functional in each iteration. \\[-4.5ex]
\item[--] Correction of the decreasing ratio if this ratio is either too
large or too small. \\[-4.5ex]
\item[--] The computed pairs $( \alpha_k, h_k)$ satisfy \eqref{eq:rb} for
all $k > 0$, i.e., this algorithm provides a numerical realization of
Algorithm~I.
\end{itemize}

% --------------------------------------------------------------------
\section*{Acknowledgments}

The work of A.L. is supported by the Brazilian National Research Council CNPq,
grant 311087/2017--5 and by the Alexander von Humboldt Foundation AvH.

% --------------------------------------------------------------------
\bibliographystyle{amsplain}
\bibliography{projectedLM}

\end{document}